\theoremstyle{defn}
\newtheorem{defn}{Definition}[section]
\theoremstyle{plain}
\newtheorem{lemma}[defn]{Lemma}
\newtheorem{theorem}[defn]{Theorem}
\newtheorem{proposition}[defn]{Proposition}
\newtheorem{corollary}[defn]{Corollary}
\newtheorem{conjecture}[defn]{Conjecture}
\theoremstyle{remark}
\newtheorem{remark}[defn]{Remark}
\newtheorem{example}[defn]{Example}
\newcommand{\HH}{{H}}
\newcommand{\T}{\mathbb{T}}
\newcommand{\rank}{{\mathrm {rank} \,}}
\newcommand{\Gr}{\mathrm{Gr}}
\newcommand{\B}{\mathrm{B}}
\begin{document}

\title{{Torus fibers and the weight filtration}}

\author{Andrew Harder}
	
	\address{Andrew Harder,
	\textnormal{Department of Mathematics, Lehigh University, Christmas-Saucon Hall, 14 E. Packer Ave., Bethlehem, PA, USA, 18015.}
	\textnormal{\texttt{anh318@lehigh.edu}}}

\begin{abstract}
We show that if $(X,Y)$ is a simple normal crossings log Calabi--Yau pair, then there is a real torus of dimension equal to the codimension of the smallest stratum of $Y$ which can be used to construct $W_{2k-1}H^k(X \setminus Y;\mathbb{Q})$ for all $k$. We show that an analogous result holds for degenerations of Calabi--Yau varieties. We use this to show that P=W type results hold for pairs $(X,Y)$ consisting of a rational surface $X$ and a nodal anticanonical divisor $Y$, and for K3 surfaces.
\end{abstract}

\maketitle

\tableofcontents
\section{Introduction}

In this paper, we will deal with mixed Hodge structures associated to geometric data of the following types.
\begin{enumerate}
\item $(X,Y)$ where $X$ is smooth and projective, and where $Y$ is simple normal crossings  in $X$.
\item $(\mathscr{X},\pi)$ where $\mathscr{X}$ is K\"ahler, $\pi$ is a projective map to the unit disc, $X_0 = \pi^{-1}(0)$ is simple normal crossings, and all other fibers are smooth.
\end{enumerate}
In the first case, we associate the mixed Hodge structure on $H^*(X\setminus Y;\mathbb{Q})$ (following Deligne \cite{deligne2, deligne}), and in the second case we associate the limit mixed Hodge structure (following Steenbrink \cite{steen} or Schmid \cite{schmid}) whose underlying vector space we take to be $H^*(X_1;\mathbb{Q})$ where $X_1 = \pi^{-1}(1).$ 

The goal of this note is to show there are naturally defined real tori in $X \setminus Y$ (resp.  $X_1$) which can be used to compute the highest part of the weight filtration in the relevant mixed Hodge structure, and to explore the consequences of this fact. We are particularly interested in the following specializations of the situations listed above.
\begin{enumerate}
\item $Y$ anticanonical in $X$ (in which case, we say that $(X,Y)$ is {\em log Calabi--Yau}).
\item $K_{\mathscr{X}}$ is trivial (in which case we say that $(\mathscr{X},\pi)$  is {\em Calabi--Yau}). 
\end{enumerate}
Our main technical result is the following theorem.
\begin{theorem}[Theorem \ref{thm:main}, Theorem \ref{thm:maincy}]\label{thm:mainintro}
Let $\delta$ be the maximal number of components of $Y$ (resp $X_0$) which intersect nontrivially.  
\begin{enumerate}
\item
If $(X,Y)$ is log Calabi--Yau, then there is a torus $\mathbb{T}$ of real dimension $\delta$ in $X \setminus Y$ so that for all $k$,
\[
W_{2k-1} H^k(X\setminus Y;\mathbb{Q}) = \mathrm{ker}\left(H^k(X \setminus Y;\mathbb{Q}) \longrightarrow H^k(\T;\mathbb{Q})\right).
\]
\item
If $(\mathscr{X},\pi)$ is Calabi--Yau, then there is a torus $\mathbb{T}$ of real dimension $\delta-1$ in $X_1$ so that for all $k$,
\[
W_{2k-1} H^k(X_1;\mathbb{Q}) = \mathrm{ker}\left( H^k(X _1;\mathbb{Q}) \longrightarrow H^k(\T;\mathbb{Q})\right).
\]
\end{enumerate}
\end{theorem}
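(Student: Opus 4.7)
The plan is to construct $\T$ locally near a deepest stratum and verify both containments: $W_{2k-1} H^k(X \setminus Y;\mathbb{Q})$ lies in the kernel of pullback to $\T$, and the induced map $\mathrm{Gr}^W_{2k} H^k(X \setminus Y;\mathbb{Q}) \to H^k(\T;\mathbb{Q})$ is injective. Fix a point $p$ in a codimension-$\delta$ stratum of $Y$, with local holomorphic coordinates $(z_1, \ldots, z_\delta, w_1, \ldots, w_{\dim X - \delta})$ around $p$ in which the components of $Y$ through $p$ are $\{z_j = 0\}$, and set
\[ \T = \{(z, w) : |z_j| = \epsilon,\ w = w(p)\}, \]
a real torus of dimension $\delta$ in $X \setminus Y$.

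For the kernel containment, I would use Deligne's description of the weight filtration via the log de Rham complex: $W_{2k-1} H^k(X \setminus Y;\mathbb{C})$ is computed from the subcomplex $W_{k-1}\Omega_X^\bullet(\log Y)$, whose local sections have at most $k-1$ log poles, so factor as $\eta \wedge \mathrm{dlog}(f_{j_1}) \wedge \cdots \wedge \mathrm{dlog}(f_{j_r})$ with $\eta$ a holomorphic form of degree at least $1$. Since $dw_i$ vanishes on $\T$ and $\mathrm{dlog}(z_j)|_\T = i\,d\theta_j$, a Fourier analysis on $(S^1)^\delta$ shows $\eta|_\T$ is cohomologically zero (a holomorphic, i.e., non-negatively Laurent-graded, coefficient function cannot match the positive character contributions imposed by the $dz_j$ factors), so the entire pullback is exact.

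For the reverse containment, the Deligne weight spectral sequence identifies $\mathrm{Gr}^W_{2k} H^k(X \setminus Y;\mathbb{Q})$ with a Gysin-kernel subquotient of $H^0(Y^{(k)};\mathbb{Q})(-k)$, and the pullback to $H^k(\T;\mathbb{Q}) \cong \wedge^k \mathbb{Q}^\delta$ records the coefficients on the $k$-fold intersections of those components passing through $p$. Injectivity thus amounts to the statement that a top-weight class whose local data at $p$ vanishes must vanish globally.

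This injectivity is the main obstacle and is where the log Calabi--Yau hypothesis is essential. The CY volume form $\omega \in H^0(X, \Omega_X^{\dim X}(\log Y))$ has non-vanishing iterated Poincar\'e residue on each codimension-$\delta$ stratum, and the dual complex $\Delta(Y)$ of an snc log CY pair is---in large generality, after Koll\'ar--Xu and Payne---a rational homology sphere of dimension $\delta - 1$, with cohomology concentrated in top degree. I expect these inputs together to force top-weight classes to be detected at any single codimension-$\delta$ stratum, so that the Gysin cocycle relations propagate the hypothesized vanishing of local data at $p$ across $\Delta(Y)$, reducing the injectivity to the explicit non-vanishing residue integral $\int_\T \omega = (2\pi i)^\delta \mathrm{Res}_p(\omega)$.
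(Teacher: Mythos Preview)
Your argument for the containment $W_{2k-1}H^k(X\setminus Y;\mathbb{Q}) \subseteq \ker\bigl(H^k(X\setminus Y;\mathbb{Q})\to H^k(\T;\mathbb{Q})\bigr)$ via the log de Rham complex is essentially sound, and can be streamlined: the punctured polydisc around $p$ is algebraically $(\mathbb{C}^*)^\delta$, whose $H^k$ is pure of weight $2k$, and restriction is a morphism of mixed Hodge structures. This is a pleasant alternative to the paper's dual approach, which instead shows that $W_{-2k}H_k(X\setminus Y;\mathbb{Q})$ is spanned by classes of the small tori $\T_I$ using El Zein--N\'emethi's homological description of the weight filtration.

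The genuine gap is in your injectivity step. You correctly reduce to showing that a class in $\Gr^W_{2k}H^k \subseteq H^0(\widetilde{Y}^{k};\mathbb{Q})$ whose coefficients vanish on all $k$-fold intersections through $p$ must itself vanish. But the mechanism you propose---the dual complex being a rational homology sphere, together with the nonvanishing residue $\int_\T\omega$---does not establish this. First, $\omega$ is a $(\dim X)$-form, so $\int_\T\omega$ is only defined when $\delta=\dim X$, and even then it only addresses $k=\delta$. Second, and more seriously, for $k<\delta$ the group $\Gr^W_{2k}H^k(X\setminus Y;\mathbb{Q})=\ker\bigl(H^0(\widetilde{Y}^{k})\to H^2(\widetilde{Y}^{k-1})\bigr)$ is \emph{not} governed purely by the combinatorics of $\Delta(Y)$: the Gysin differential lands in $H^2$, not $H^0$, so knowing the rational homology of $\Delta(Y)$ does not let you ``propagate'' the vanishing across strata. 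The paper supplies exactly the missing ingredient: Koll\'ar's theorem that any two minimal strata of an snc log Calabi--Yau pair are connected by chains of rational curves lying in $Y^{\delta-1}$ and meeting $Y^{\delta}$ transversally in two points. A short local computation with real oriented blow-ups then shows that the profound tori $\T_J$ over different minimal strata are all \emph{homotopic} (not merely homologous) in $X\setminus Y$. Homotopy is what you need: it forces the images of $H_k(\T_J)\to H_k(X\setminus Y)$ to coincide for every $k$ simultaneously, which is precisely your injectivity statement in dual form. Your proposal also does not address part~(2); the paper runs the same strategy there, replacing El Zein--N\'emethi by Clemens' computation of $(T-\mathrm{id})^k$ via the contraction map, and again invoking Koll\'ar's connectedness result applied to the central fiber.
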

Let us now describe the torus $\T$ referred to in the statement of Theorem \ref{thm:mainintro}(1). Assume that $Y$ contains a point $p$ which is the intersection of $\delta$ divisors. We can choose local coordinates $z_1,\dots, z_d$ centered at $p$ so that $Y = Z(z_1\dots z_\delta)$. Then for some constant $0  < \varepsilon \ll 1$,
\[
\T = \{ (\varepsilon\exp(\mathtt{i}\theta_1), \dots, \varepsilon \exp(\mathtt{i}\theta_\delta), 0,\dots , 0) : \theta_1,\dots, \theta_\delta \in [0,2\pi) \}.
\]
The condition that $(X,Y)$ is log Calabi--Yau ensures that the homotopy class of $\T$ does not depend on the choice of $p$. A similar description for the torus in Theorem \ref{thm:mainintro}(2), which may be found in (\ref{eq:degentori}).


One possible reason to be interested in Theorem \ref{thm:mainintro} is the P=W conjecture of de Cataldo, Hausel and Migliorini. For a detailed introduction (including proper definitions of the objects involved), see \cite{dchm}. For $G$ a reductive group, we denote by $M_\mathrm{H}$ the moduli space of (twisted) $G$-Higgs bundles on a curve $C$. This is also called the {\em Hitchin moduli space} associated to $C$ and $G$. The variety $M_\mathrm{H}$ is noncompact, hyperk\"ahler, and is of dimension $2d$ for some $d$. According to Hitchin \cite{hit}, there is an algebraic complete integrable Hamiltonian system on $M_\mathrm{H}$, which manifests as a proper map $h : M_\mathrm{H} \rightarrow \mathbb{A}^{d}$ called the Hitchin map. Associated to the same data, we may construct the moduli space of (twisted) $G$-representations of the fundamental group of $C$. This is the moduli space of $G$-local systems on $C \setminus p$ so that  so that monodromy around $p$ is conjugate to a fixed finite order automorphism. We will call this space the {\em Betti moduli space} and denote it $M_\B$. By the nonabelian Hodge correspondence, $M_\B$ and $M_\mathrm{H}$ are related by hyperk\"ahler rotation and therefore are diffeomorphic to one another.

Clearly, $M_\B$ and $M_\mathrm{H}$ have identical cohomology. However, since $M_\B$ and $M_\mathrm{H}$ are not generally algebraic deformations of one another, it is unclear whether there is any relationship between the various Hodge theoretic filtrations on $H^k(M_\B;\mathbb{Q})$ and those of $H^k(M_\mathrm{H};\mathbb{Q})$. The P=W conjecture roughly proposes that the weight filtration on $M_\B$ should match the perverse Leray filtration on $M_\mathrm{H}$ coming from the Hitchin map. The {\em perverse Leray filtration}  (denoted $P_\bullet$) is the filtration on the cohomology of $M_\mathrm{H}$ induced by the perverse truncations of $Rh_*{\mathbb{Q}}_{M_\mathrm{H}}$. A precise statement of the P=W conjecture is as follows.
\begin{conjecture}[de Cataldo, Hausel, Migliorini \cite{dchm}]\label{conj:P=W}
Letting $M_\mathrm{H}$ and $M_\B$ be as above, we have that 
\[
W_{2j-2}\HH^k(M_\B;\mathbb{Q}) = W_{2j-1}\HH^k(M_\B;\mathbb{Q}) = P_{j-1}\HH^k(M_\mathrm{H};\mathbb{Q})
\]
for all $j$ and $k$. 
\end{conjecture}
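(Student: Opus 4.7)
The plan is to apply Theorem \ref{thm:mainintro}(1) to a log Calabi--Yau compactification of the Betti moduli space $M_\B$, to identify the resulting torus under the nonabelian Hodge diffeomorphism with a real subtorus of a generic Hitchin fiber, and then to match the resulting description of $W_{2k-1}\HH^k(M_\B;\mathbb{Q})$ with de Cataldo--Migliorini's geometric description of $P_{k-1}\HH^k(M_\HH;\mathbb{Q})$.

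Building on work of Simpson (and its subsequent extensions), $M_\B$ admits a smooth projective compactification $X \supset M_\B$ whose boundary $Y = X \setminus M_\B$ is simple normal crossings. Since $M_\B$ carries a holomorphic symplectic form with logarithmic poles along $Y$, the pair $(X,Y)$ is log Calabi--Yau, and Theorem \ref{thm:mainintro}(1) produces a canonical homotopy class of torus $\T \subset M_\B$ of real dimension $\delta$ satisfying
\[
W_{2k-1}\HH^k(M_\B;\mathbb{Q}) = \mathrm{ker}\bigl(\HH^k(M_\B;\mathbb{Q}) \longrightarrow \HH^k(\T;\mathbb{Q})\bigr).
\]
The next step is to transport $\T$ to the Hitchin side. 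The nonabelian Hodge correspondence gives a real-analytic diffeomorphism $M_\B \cong M_\HH$, and for a suitably chosen compactification I expect $\T$ to be isotopic under this diffeomorphism to a smooth compact Hitchin fiber $h^{-1}(b)$ itself, viewed as a real torus of dimension $2d$ (the underlying manifold of an abelian variety). Rather than chase cycles literally through the transcendental NAH map, I would realize the transport via a $\lambda$-connection degeneration interpolating $M_\B$ and $M_\HH$ and apply Theorem \ref{thm:mainintro}(2) to the resulting Calabi--Yau family, which places the torus directly inside a nearby Hitchin-type fiber.

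By de Cataldo--Migliorini, the perverse Leray filtration for a proper map is computed by restriction to preimages of generic affine flags: $P_{j-1}\HH^k(M_\HH;\mathbb{Q})$ is the kernel of restriction to $h^{-1}(\Lambda)$ for an affine subspace $\Lambda \subset \mathbb{A}^d$ of codimension depending on $j$ and $k$. For $j=k$ this reduces to restriction to a single generic Hitchin fiber, and because the fiber is an abelian variety the restriction map factors through, and has the same kernel as, the restriction to its compact real subtorus. Combining with the previous step identifies $W_{2k-1}\HH^k(M_\B)$ with $P_{k-1}\HH^k(M_\HH)$. To recover the full conjecture, I would iterate the scheme along a flag of log-Calabi--Yau sub-pairs of $(X,Y)$, each contributing a deeper weight step.

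The central obstacle is the matching step: because the NAH correspondence is highly transcendental, one cannot transport $\T$ cycle-by-cycle, and the proof must identify the filtrations intrinsically, likely via a common degeneration on which both parts of Theorem \ref{thm:mainintro} can be simultaneously invoked. A secondary obstacle is that Theorem \ref{thm:mainintro} captures only the single weight step $W_{2k-1}\HH^k$, whereas Conjecture \ref{conj:P=W} demands control of every graded piece, so producing a genuine iteration — for example, through successive blow-ups of strata of $Y$ combined with a weight spectral sequence analysis — is essential. Finally, the equality $W_{2j-2}=W_{2j-1}$ reflects Hodge-Tateness of the weight graded pieces and must be handled separately, presumably by a purity-type argument or an explicit torus action on $M_\B$.
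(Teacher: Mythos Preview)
The statement you are attempting to prove is labeled a \emph{Conjecture} in the paper, and the paper does not claim to prove it. Indeed the paper says explicitly that even in the cases $G=\mathrm{SL}_2,\mathrm{GL}_2,\mathrm{PGL}_2$ the proof in \cite{dchm} is ``formidable and requires the application of a number of difficult results.'' There is therefore no ``paper's own proof'' to compare your proposal against; the paper only explains how Theorem~\ref{thm:mainintro}(1) sheds light on the conjecture at the highest weight, conditional on further conjectures of Simpson and Auroux.

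Your proposal is not a proof but a heuristic outline, and it rests on several unproven inputs. First, the existence of a log Calabi--Yau snc compactification of $M_\B$ with $\delta=\dim M_\B$ is precisely Simpson's conjecture; you assert it as established (``Building on work of Simpson\ldots''), but it is not. Second, identifying $\T$ with a Hitchin fiber under the nonabelian Hodge diffeomorphism is exactly the content of Auroux's conjecture in this setting; the paper is careful to say that ``it is not clear whether $\T$ and $F_h$ are homotopic to one another.'' Your suggestion to bypass this via a $\lambda$-connection degeneration and Theorem~\ref{thm:mainintro}(2) is not a proof either: that theorem needs a semistable Calabi--Yau degeneration over a disc with snc central fiber, and producing such a model for the Hodge moduli family is itself a hard open problem. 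Third, even granting all of this, Theorem~\ref{thm:mainintro} controls only $W_{2k-1}H^k$; the paper states plainly that Simpson plus Auroux plus Theorem~\ref{thm:mainintro}(1) would give the P=W conjecture only ``at the highest weight.'' Your proposed iteration along a flag of log Calabi--Yau sub-pairs has no mechanism: there is no reason the strata of $Y$ should themselves form log Calabi--Yau pairs of the required type, and no argument is given for why successive restrictions would pick out the deeper perverse pieces.
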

This conjecture has been proved in the case where $G = \mathrm{SL}_2(\mathbb{C}), \mathrm{GL}_2(\mathbb{C})$, and $\mathrm{PGL}_2(\mathbb{C})$ in \cite{dchm}. Even in these simple cases, the proof is formidable and requires the application of a number of difficult results. Theorem \ref{thm:mainintro}(1) offers some insight into the geometry of Conjecture \ref{conj:P=W} at the highest weight.

According to de Cataldo and Migliorini \cite{dcm} the perverse Leray filtration can be computed using the preimages of a collection of general hyperplanes $H_1,\dots, H_{d}$ on the Hitchin base $\mathbb{A}^{d}$ and their intersections. Particularly, if we let $F_h$ be a smooth fiber of $h$ (which is an abelian variety of dimension $d$), then
\begin{equation}\label{eq:plow}
P_{k-1} \HH^k(M_\mathrm{H};\mathbb{Q}) = \mathrm{ker}(\HH^k(M_\mathrm{H};\mathbb{Q}) \longrightarrow \HH^k(F_{h};\mathbb{Q})).
\end{equation}
If Conjecture \ref{conj:P=W} holds, then it implies, along with (\ref{eq:plow}), that there is a non-algebraic torus $F_h$ in $M_\B$ of real dimension $2d$ so that 
\[
W_{2k-1}\HH^k(M_\B;\mathbb{Q}) = \mathrm{ker}(\HH^k(M_\B;\mathbb{Q}) \longrightarrow \HH^k(F_h;\mathbb{Q})).
\]
Simpson \cite{simps} has conjectured that $M_\B$ admits a log Calabi--Yau compactification for which $\delta = \dim M_\B$. If this is true, then Theorem \ref{thm:mainintro}(1) provides a real $(\dim M_\B)$-dimensional torus with the same properties as the torus $F_h$ is expected to have. 

Even if $M_\B$ is log Calabi--Yau, it is not clear whether $\T$ and $F_h$ are homotopic to one another, however this is implied by a general conjecture of Auroux. Since $F_h$ is the fiber of a holomorphic Lagrangian torus fibration on $M_\mathrm{H}$ it is the fiber of a special Lagrangian torus fibration on $M_\B$. A conjecture of Auroux \cite[Conjecture 7.3]{auroux} implies that if $(X,Y)$ is a log Calabi--Yau pair, then the fiber of any special Lagrangian torus fibration on $X\setminus Y$ is homotopic to $\T$. Therefore the conjectures of Simpson and Auroux combined with Theorem \ref{thm:mainintro}(1) imply the P=W conjecture at the highest weight. Moreover, this seems to suggest that the P=W conjecture is true at the highest weight for any pair $M_\B$ and $M_\mathrm{H}$ satisfying the ``nonabelian Hodge package'' (\cite[\S 1]{boalch}) under the condition that $M_\B$ admits a log Calabi--Yau compactification.

In \cite{knps}, Katzarkov, Noll, Pandit, and Simpson have made a related conjecture called the ``geometric P=W conjecture''. If one chooses a compactification $\overline{M_{\B}}$ of $M_\B$, there is a differentiable torus fibration on a neighbourhood of $\overline{M_\B} \setminus M_\B$ in $M_\B$ whose generic fiber is $\T$ and whose base is a sphere of dimension $2d-1$ (see \cite{simps2} for a precise explanation). This fibration is constructed from $\overline{M_{\B}}$ and is called the {\em Betti Hitchin map}. Katzarkov, Noll, Pandit, and Simpson conjecture that the Betti Hitchin map can be identified with the restriction of the Hitchin map to a large sphere in $\mathbb{A}^d$. Theorem \ref{thm:mainintro}(1) provides a link between the geometric P=W conjecture and the P=W conjecture of de Cataldo, Hausel, and Migliorini. 

In the case where $\dim M_\B = 2$, the perverse Leray filtration has length 2, so Theorem \ref{thm:mainintro}(1) implies that the P=W conjecture follows from the geometric P=W conjecture. Similar ideas appear in work of Szab\'o \cite{szabo1,szabo2}. We will use this idea to prove the following ``P=W type'' theorem which, in some sense, generalizes a recent result of Zhang \cite{zh}.
\begin{theorem}[Theorem \ref{thm:symhk}, Proposition \ref{prop:complexstr}, Theorem \ref{thm:pw}]
Let $X$ be a rational surface and let $Y$ be a reduced, nodal anticanonical divisor in $X$. Then the following statements are true.
\begin{enumerate}
\item For some symplectic form $\omega$ on $X$, there is a Lagrangian torus fibration $g : X\setminus Y\rightarrow \Delta^\circ$ for some open subset $\Delta^\circ$ of $\mathbb{R}^2$.
\item There is a different (usually non-algebraic) complex structure on $X\setminus Y$ in which $g$ is a holomorphic elliptic fibration over a disc.
\item The perverse Leray filtration on $H^*(X\setminus Y;\mathbb{Q})$ with respect to $g$ has the property that $P_i = W_{2i+1} = W_{2i}$ for all $i$.
\end{enumerate}
\end{theorem}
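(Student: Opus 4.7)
The plan is to establish parts (1), (2), (3) in order, with Theorem \ref{thm:mainintro}(1) providing the decisive input only for (3). For (1), I would construct a global almost toric Lagrangian torus fibration on $X$ whose smooth fibers lie in $X \setminus Y$. Near each node $p \in Y$ with local equation $z_1 z_2 = 0$, the map $(z_1, z_2) \mapsto (|z_1|^2, |z_2|^2)$ is a local Lagrangian model whose generic fiber is precisely the torus $\T$ of Theorem \ref{thm:mainintro}(1). The global model is assembled by extending these local toric charts to all of $X$ with focus--focus singularities over finitely many interior base points, following Symington's theory of almost toric $4$-manifolds; the hypotheses that $X$ is rational and that $Y$ is reduced and nodal place us in exactly the range where this construction produces an honest Lagrangian fibration over an open subset $\Delta^\circ \subset \mathbb{R}^2$.

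For (2), I would perform a twistor-style rotation of the complex structure on $X \setminus Y$. The log Calabi--Yau condition produces a nowhere-vanishing holomorphic symplectic form $\Omega$ on $X \setminus Y$, and together with $\omega$ from (1), the triple $(g, \omega, \mathrm{Im}\,\Omega)$ encodes a non-K\"ahler hyperk\"ahler-like structure on $X \setminus Y$. Rotating to the complex structure in which $\mathrm{Im}\,\Omega$ becomes the K\"ahler form turns the Lagrangian fibers into holomorphic elliptic curves, while $\Delta^\circ$ inherits a complex structure from the integral affine structure of the fibration. A local model computation confirms that the focus--focus fibers become Kodaira singular elliptic fibers, yielding a holomorphic elliptic fibration over a disc.

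For (3), I would combine Theorem \ref{thm:mainintro}(1) with the de Cataldo--Migliorini computation of the lowest piece of the perverse Leray filtration:
\[
P_{k-1} H^k(X\setminus Y;\mathbb{Q}) = \ker\bigl(H^k(X\setminus Y;\mathbb{Q}) \to H^k(F;\mathbb{Q})\bigr),
\]
where $F$ is a smooth fiber of $g$. The construction in (1) arranges that $F$ is homotopic in $X \setminus Y$ to the local torus $\T$ of Theorem \ref{thm:mainintro}(1), so this kernel equals $W_{2k-1} H^k$. Because $X$ is rational and every component of the nodal $Y$ is a smooth rational curve, Deligne's description of the weight filtration shows that only even weights appear on $H^*(X\setminus Y;\mathbb{Q})$, giving the purity statement $W_{2i+1} = W_{2i}$ for all $i$. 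Combining these identifications yields $P_{k-1} = W_{2k-2}$ in each degree $k$; the remaining equalities $P_i = W_{2i}$ follow by comparing dimensions and using the semisimplicity of the perverse decomposition together with the fact that the one-dimensional base forces the perverse filtration to have at most three nonzero steps.

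The main obstacle is the homotopy identification of the global Lagrangian fiber $F$ with the local torus $\T$ of Theorem \ref{thm:mainintro}(1). Although both are $2$-tori near the nodal locus of $Y$, showing that they become homotopic in $X \setminus Y$ requires a careful analysis of the integral affine structure near the components of $Y$ in the almost toric fibration built in part (1). Once this identification is in hand, the purity result and the de Cataldo--Migliorini kernel formula combine cleanly to give the full $P = W$ assertion.
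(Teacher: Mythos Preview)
Your treatment of parts (1) and (3) is close to the paper's. For (1) the paper also uses Symington's almost toric theory, but the key step you gloss over is a reduction: by Gross--Hacking--Keel, any such $(X,Y)$ arises (up to biregularity of the complement) from a toric pair by repeatedly blowing up smooth boundary points, and Symington's surgery shows almost toric fibrations survive these blow-ups. This is also what pins down the homotopy class of the fiber: for a toric surface the moment-map fiber \emph{is} the profound torus, and the blow-up surgery is supported away from the nodes of $Y$, so the identification $F \simeq \T$ is immediate rather than the ``main obstacle'' you describe. For (3) your argument is essentially the paper's: Deligne's spectral sequence plus rationality of $X$ and of the components of $Y$ force even weights, and then Theorem~\ref{thm:mainintro}(1) together with the kernel description of $P_{k-1}$ finishes.

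The genuine gap is in part (2). The paper does \emph{not} produce the complex structure by any rotation of $(\omega,\Omega)$. Your ``twistor-style rotation'' would require that the Symington symplectic form $\omega$ and the holomorphic volume form $\Omega$ fit into a hyperk\"ahler triple on $X\setminus Y$, i.e.\ that there is a complete Ricci-flat K\"ahler metric whose K\"ahler form is $\omega$; nothing in the construction of (1) provides this, and even invoking Tian--Yau type metrics would not make the Symington fibration special Lagrangian. The paper instead argues purely topologically: the fibration $g$ is an elliptic Lefschetz fibration over the disc, and one shows that the total monodromy $T_\infty$ around $\partial\Delta^\circ$ can be completed to the identity by appending further positive Dehn twists (an explicit factorization of $R_{1,0}^{-1}$ as a product of conjugates of $R_{1,0}$). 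This extends $g$ to an elliptic Lefschetz fibration over $S^2$, and by the Livne--Moishezon classification such fibrations are diffeomorphic to algebraic elliptic surfaces over $\mathbb{P}^1$; restricting gives the desired holomorphic model for $g$. Your proposed route, as written, does not go through.
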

Finally, the fact that Theorem \ref{thm:mainintro}(1) has a compact analogue in Theorem \ref{thm:mainintro}(2) suggests that the P=W conjecture has an analogue for compact hyperk\"ahler varieties. Indications that such an analogue might exist already appear in the recent work of Shen and Yin \cite{s-y} and in the much older work of Gross \cite{gross}. Assume that $M$ is compact and hyperk\"ahler, and that it admits a holomorphic Lagrangian torus fibration $\ell: M \rightarrow B$. Let $\beta$ be the pullback of an ample divisor on $B$. Let $q(\bullet,\bullet)$ be the Beauville--Bogomolov--Fujiki form on $H^2(M;\mathbb{Z})$. The class $\beta$ has the property that $q(\beta,\beta) = 0$, hence it defines a type III boundary point in the moduli space of K\"ahler deformations of $M$. As $M$ deforms towards this boundary point, one obtains a limit mixed Hodge structure which is equipped with a monodromy weight filtration. Let $M_{1}$ be a deformation of $M$ which is near the boundary point determined by $\beta$. Let $W_\bullet H^k(M_1;\mathbb{Q})$ be the monodromy weight filtration corresponding to the degeneration of $M_1$ at the boundary point determined by $\beta$.

\begin{conjecture}\label{conj:P=Wcompact}
Let notation be as above. Then there is a diffeomorphism between $M$ and $M_1$ so that perverse Leray filtration associated to $\ell$ and the weight filtration corresponding to the limit mixed Hodge structure of $(\mathscr{M},\pi)$ have the property that 
\[
P_iH^k(M;\mathbb{Q}) = W_{2i}H^k(M_1;\mathbb{Q}) = W_{2i+1}H^k(M_1;\mathbb{Q}) 
\]
for all $i$ and $k$.
\end{conjecture}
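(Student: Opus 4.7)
\textbf{Proof plan for Conjecture \ref{conj:P=Wcompact}.} The plan is to combine Theorem \ref{thm:mainintro}(2) with two hard Lefschetz-style reductions that collapse the full filtration identity to matching a single ``top weight'' piece in each degree, and then to exhibit a diffeomorphism $\Phi : M \to M_1$ under which a general Lagrangian fiber $F_\ell$ is sent to the torus $\T$ produced by Theorem \ref{thm:mainintro}(2).

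First, on the $M$ side: because $\ell : M \to B$ is projective with abelian variety fibers and $\beta = \ell^* A$ for $A$ ample on $B$, the decomposition theorem together with relative hard Lefschetz for $\beta$ endows $\mathrm{Gr}_\bullet^P H^*(M;\mathbb{Q})$ with an $\mathrm{SL}_2$-action. Following de Cataldo--Migliorini \cite{dcm}, the top step
\[
P_{k-1}H^k(M;\mathbb{Q}) = \ker\bigl(H^k(M;\mathbb{Q}) \to H^k(F_\ell;\mathbb{Q})\bigr)
\]
together with cup product by $\beta$ generates all of $P_\bullet H^\bullet(M;\mathbb{Q})$. Second, on the $M_1$ side: since $q(\beta,\beta)=0$ defines a type III boundary point, the monodromy logarithm $N$ of the limit mixed Hodge structure agrees, up to normalization, with cup product by $\beta$ (the Beauville--Bogomolov relation $\beta^{n+1}=0$ while $\beta^n \ne 0$ on an irreducible holomorphic symplectic manifold of dimension $2n$ makes $\cup\beta$ a Lefschetz-type nilpotent of weight $2$). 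The $\mathrm{SL}_2$-splitting then reconstructs $W_\bullet H^k(M_1;\mathbb{Q})$ from its top piece, and Theorem \ref{thm:mainintro}(2) identifies
\[
W_{2k-1}H^k(M_1;\mathbb{Q}) = \ker\bigl(H^k(M_1;\mathbb{Q}) \to H^k(\T;\mathbb{Q})\bigr).
\]
With both reductions in place, the conjecture is equivalent to producing a diffeomorphism $\Phi : M \to M_1$ that intertwines $\cup \beta$ on cohomology and with $\Phi_*[F_\ell] = [\T]$ in homotopy.

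The main obstacle is the construction and verification of $\Phi$. This is the compact analogue of the picture behind the Simpson and Auroux conjectures used in the $M_\B$ discussion: one must identify the torus $\T$, built from local analytic coordinates at the deepest stratum of the type III degeneration $\mathscr{M}\to\Delta$, with the homotopy class of a Lagrangian fiber of $\ell$ after hyperkähler rotation. In cases where an explicit SYZ-type construction is available (for instance K3 surfaces and certain hyperkähler fourfolds, following \cite{gross} and \cite{s-y}), $\Phi$ can plausibly be constructed directly by interpreting the near-boundary geometry of $\mathscr{M}$ as a family of Lagrangian tori and matching them with the fibers of $\ell$ via a Gross--Wilson-type limiting argument. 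In general, however, producing $\Phi$ and verifying the homotopy equality $\Phi_*[F_\ell]=[\T]$ appears to require genuinely new input, likely via symplectic parallel transport across the degeneration or via a compact analogue of Auroux's conjecture \cite[Conjecture 7.3]{auroux}; this last identification is where I expect all of the real difficulty to reside.
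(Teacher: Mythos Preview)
This statement is a \emph{conjecture} in the paper, not a theorem; the paper does not prove it in general. What the paper does prove is the special case of K3 surfaces (Theorem \ref{thm:pwk3}), and it does so by a route quite different from your plan. Rather than invoking Theorem \ref{thm:mainintro}(2) or matching a Lagrangian fiber with the profound torus, the K3 argument is a direct lattice computation: one writes down an explicit nilpotent $N_{\beta,\rho}$ on $H^2$ depending on $\beta$ and an auxiliary class $\rho$, checks by hand that its weight filtration is $\mathbb{Q}\beta \subset \beta^\perp \subset H^2$ (Proposition \ref{prop:weightfilt}), checks separately that the perverse Leray filtration is the same thing (Proposition \ref{prop:pervfilt}), and then invokes Soldatenkov's work to realize $N_{\beta,\rho}$ as the log monodromy of an honest projective degeneration (Theorem \ref{thm:construction}). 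No torus $\T$, no diffeomorphism $\Phi$, and no $\mathrm{SL}_2$ reduction are used.

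Your plan is a reasonable outline for attacking the general conjecture, and you are right that the crux is producing $\Phi$ with $\Phi_*[F_\ell]=[\T]$; the paper in effect says the same in the introduction when it ties the noncompact version to the Auroux and Simpson conjectures. But two of your reduction steps are not correct as stated. First, the assertion that the monodromy logarithm $N$ ``agrees, up to normalization, with cup product by $\beta$'' is too strong: already for K3 surfaces the operator the paper uses is $N_{\beta,\rho}(x)=\langle x,\beta\rangle\rho-\langle x,\rho\rangle\beta$, which is not $\cup\beta$, and in higher dimension one needs the Looijenga--Lunts--Verbitsky algebra (this is what underlies Soldatenkov's results) rather than a single Lefschetz operator. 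Second, even granting compatible $\mathrm{SL}_2$-actions, knowing that $\Phi$ matches the top pieces $W_{2k-1}H^k=P_{k-1}H^k$ does not by itself force the entire filtrations to agree unless you also know that $\Phi^*$ intertwines the two nilpotents, which is an additional (and nontrivial) hypothesis you have folded into ``intertwines $\cup\beta$''. So the plan, as written, has genuine gaps beyond the construction of $\Phi$; it is best read as motivation for the conjecture rather than a route to a proof.
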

This conjecture seems very plausible to us. In fact one may deduce that
\[
\dim P_iH^k(M;\mathbb{Q}) = \dim W_{2i}H^k(M_1;\mathbb{Q}), \, \, \dim W_{2i+1}H^k(M_1;\mathbb{Q})=0
\] 
from work of Shen and Yin \cite{s-y} along with results of Soldatenkov \cite{sold}. The main result of \cite{s-y} is that if $M$ is a compact hyperk\"ahler variety which admits a Lagrangian torus fibration $\ell : M \rightarrow B$, then
\begin{equation}\label{eq:s-y}
\dim \Gr^P_iH^{i+j}(M) = \dim \Gr_F^{i}H^{i+j}(M) 
\end{equation}
for all $i$ and $j$. The dimension of $\Gr_F^iH^{i+j}(M;\mathbb{C})$ is a deformation invariant of $M$. As noted above, given $\ell$, there is a degeneration $(\mathscr{M},\pi)$ of hyperk\"ahler manifolds whose smooth fibers are diffeomorphic to $M$ and whose monodromy operator is related $\beta$. Let $H_\mathrm{lim}^{i+j}$ denote the limit mixed Hodge structure associated to the degeneration $(\mathscr{M}, \pi)$. In \cite{sold}, Soldatenkov proves that if a monodromy operator $N$ associated to a semistable degeneration $(\mathscr{M},\pi)$ of compact hyperk\"ahler manifolds has the property that $N^2|_{H^2(M)} \neq 0$, then the limit mixed Hodge structure of this degeneration is {\em Hodge--Tate}. This means that, for all $i$ and $j$,
\begin{equation}\label{eq:sold}
\dim \Gr^W_{2i} H^{i+j}_\mathrm{lim} = \dim \Gr_F^i H^{i+j}_\mathrm{lim}, \quad \dim  \Gr^W_{2i+1}H^{i+j}_\mathrm{lim} = 0.
\end{equation}
Combining (\ref{eq:s-y}) and (\ref{eq:sold}), we obtain the identity
\[
\dim \Gr^P_iH^{i+j}(M) = \dim \Gr^W_{2i} H^{i+j}_\mathrm{lim}
\]
hence Conjecture \ref{conj:P=Wcompact} holds on the numerical level. Here we have used the fact that $\dim \Gr_F^iH^{i+j}_\mathrm{lim} = \dim \Gr_F^i H^{i+j}(M;\mathbb{Q})$. Conjecture \ref{conj:P=Wcompact} says that this is obtained from an identification of filtrations. Theorem \ref{thm:mainintro}(2) allows us to prove this conjecture in dimension 2.
\begin{theorem}[Theorem \ref{thm:pwk3}]
Conjecture \ref{conj:P=Wcompact} is true for K3 surfaces.
\end{theorem}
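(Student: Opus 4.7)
The plan is to deduce Conjecture \ref{conj:P=Wcompact} for K3 surfaces by combining Theorem \ref{thm:mainintro}(2) with the de Cataldo--Migliorini description of the perverse Leray filtration via restriction to a smooth fiber.

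Let $M$ be a K3 surface equipped with a Lagrangian torus fibration $\ell : M \to B$; automatically $B \cong \mathbb{P}^1$ and the smooth fibers $F_\ell$ are elliptic curves. Set $\beta = \ell^{\ast} H$ for $H$ ample on $B$. Since $\ell$ is Lagrangian, $q(\beta,\beta)=0$, so $\beta$ determines a type III boundary point of the period domain of K\"ahler K3 surfaces, and the associated nilpotent monodromy operator satisfies $N^{2} \neq 0$ on $H^{2}$. After semistable reduction the degeneration $(\mathscr{M},\pi)$ admits a Kulikov model whose central fiber is a chain of smooth rational surfaces meeting transversally, with dual complex homeomorphic to $S^2$; the triple points give $\delta = 3$. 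Applying Theorem \ref{thm:mainintro}(2) produces a real $2$-torus $\T \subset M_1$ with
\[
W_{3} H^{2}(M_1;\mathbb{Q}) = \ker\bigl(H^{2}(M_1;\mathbb{Q}) \longrightarrow H^{2}(\T;\mathbb{Q})\bigr).
\]

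The key step, and the main obstacle, is to produce a diffeomorphism $\varphi : M \to M_1$ under which the Lagrangian fiber $F_\ell$ is homotopic to $\T$. Hyperk\"ahler rotation recasts $\ell$ as a holomorphic elliptic fibration $\ell_J : M_J \to \mathbb{P}^1$ whose fiber class is $\beta$; the underlying smooth manifold is unchanged, so as a real submanifold $F_\ell$ equals the elliptic fiber $F_{\ell_J}$. Because $\beta$ governs our chosen type III direction, $M_J$ can be identified (via Ehresmann on the smooth locus of the Kulikov family) with a smooth fiber $M_1$ of $(\mathscr{M},\pi)$, providing the diffeomorphism $\varphi$. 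The delicate remaining point is that, under $\varphi$, $F_{\ell_J}$ is homotopic in $M_1$ to the torus $\T$ of Theorem \ref{thm:mainintro}(2)---in SYZ parlance, that the vanishing torus near a triple point of the Kulikov central fiber represents the class of the generic elliptic fiber. I would establish this by a local coordinate comparison near a triple point: both $\T$ (by its explicit description given right after Theorem \ref{thm:mainintro}) and a smoothing of $F_{\ell_J}$ over a small disc about the triple point are explicit real $2$-tori in the neighbourhood of that point, and can be directly matched up to homotopy.

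With $\varphi_{\ast} F_\ell \simeq \T$ in place, the rest is short. De Cataldo--Migliorini applied to $\ell$ gives
\[
P_{1} H^{2}(M;\mathbb{Q}) = \ker\bigl(H^{2}(M;\mathbb{Q}) \longrightarrow H^{2}(F_\ell;\mathbb{Q})\bigr),
\]
which pulls back along $\varphi^{\ast}$ to $W_{3} H^{2}(M_1;\mathbb{Q})$ by Theorem \ref{thm:mainintro}(2). Soldatenkov's theorem, via (\ref{eq:sold}), forces the limit mixed Hodge structure to be Hodge--Tate, so $W_{3} = W_{2}$; hence $P_{1} = W_{2} = W_{3}$ on $H^{2}$. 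The lowest piece $P_{0} H^{2}(M) = \ell^{\ast} H^{2}(\mathbb{P}^{1};\mathbb{Q})$ is one-dimensional, spanned by $\beta$, and by Hodge--Tate weight symmetry $\dim \Gr^{W}_{0} H^{2}_{\mathrm{lim}} = \dim \Gr^{W}_{4} H^{2}_{\mathrm{lim}} = 1$, with $W_{0}$ spanned by the image of $\beta$ under $\varphi^{\ast}$ (the monodromy-invariant class associated to the degeneration direction); thus $P_{0} = W_{0} = W_{1}$. The cases $k \in \{0,4\}$ are trivial, so Conjecture \ref{conj:P=Wcompact} holds for K3 surfaces.
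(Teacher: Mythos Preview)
Your route is genuinely different from the paper's, and it contains a real gap at exactly the point you flag as ``delicate.''

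The paper does \emph{not} prove this theorem via Theorem \ref{thm:mainintro}(2). Instead it works lattice-theoretically: given the fiber class $\beta$ and an auxiliary $\rho \in \beta^{\perp}$ with $\langle\rho,\rho\rangle>0$, it writes down the nilpotent operator $N_{\beta,\rho}(x)=\langle x,\beta\rangle\rho-\langle x,\rho\rangle\beta$ on $\Lambda_{\mathrm{K3}}$, checks directly that its monodromy weight filtration is $0 \subset \mathbb{Q}\beta \subset \beta^{\perp} \subset H^{2}$ (Proposition \ref{prop:weightfilt}), checks that the perverse Leray filtration of $g$ is the \emph{same} chain $0 \subset \mathbb{Q}\beta \subset \beta^{\perp} \subset H^{2}$ (Proposition \ref{prop:pervfilt}), and then invokes Soldatenkov's construction (Theorem \ref{thm:construction}) to realize $N_{\beta,\rho}$ as the log-monodromy of an actual projective degeneration. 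No torus $\T$, no Kulikov model, no homotopy comparison.

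Your gap is the claim that under an Ehresmann trivialization the elliptic fiber $F_{\ell_{J}}$ lands homotopic to the profound torus $\T$. The ``local coordinate comparison near a triple point'' is not an argument: Ehresmann gives you \emph{some} diffeomorphism $M_{J}\cong M_{1}$, with no control whatsoever on where $F_{\ell_{J}}$ goes, and in particular no reason it ends up near a triple point of the central fiber. What you are asserting is precisely the K3 instance of the Auroux-type statement the introduction explicitly describes as conjectural. The paper's approach sidesteps this entirely because it never needs to locate $\T$ geometrically; both filtrations are computed as $\mathbb{Q}\beta \subset \beta^{\perp}$ purely from the class $\beta$.

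If you wanted to rescue your strategy, note that you only need $\varphi_{*}[F_{\ell}]$ and $[\T]$ to be proportional in $H_{2}(M_{1};\mathbb{Q})$, not homotopic. This \emph{can} be extracted: $[\T]$ spans $\mathrm{im}(S_{2}^{2})$ by Clemens, and under Poincar\'e duality this is the span of $\mathrm{im}(N^{2})=\mathbb{Q}\beta$ once you know the monodromy is $\exp(cN_{\beta,\rho})$. But establishing that last identification of the monodromy already requires Soldatenkov's construction and the lattice computation of Proposition \ref{prop:weightfilt}, at which point you have reproduced the paper's proof and the detour through $\T$ is superfluous.
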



\subsection*{Organization}
Sections \ref{sect:lcy} and \ref{sect:lmhs} are devoted to the proof of Theorem \ref{thm:mainintro}. Both sections follow the same approach. We first use topological data and work of El Zein and N\'emethi \cite{elz-n} (resp. Clemens \cite{clem}) to prove that for any snc pair $(X,Y)$ (resp. semistable degeneration) there is a collection of real tori $\T_1,\dots, \T_m$ from which one can compute the lowest weight piece of the mixed Hodge structure on the corresponding homology group. Then we apply results of Koll\'ar \cite{kollar} to prove the two parts of Theorem \ref{thm:mainintro} separately.

In Section \ref{sect:cons}, we prove P=W type results for certain surfaces. First, if $(X,Y)$ is log Calabi--Yau pair so that $Y$ has at least one node, we use work of Symington \cite{sym} and Gross, Hacking and Keel \cite{ghk} to show that there is an elliptic Lefschetz fibration $g : X \setminus Y \rightarrow \mathbb{R}^2$. Then we argue that this elliptic Lefschetz fibration admits a complex structure and we prove that the perverse Leray filtration associated to this holomorphic fibration is equal to the weight filtration in the original complex structure. We then state and prove Conjecture \ref{conj:P=Wcompact} in the case where $\dim M = 2$.


\subsection*{Acknowledgements}

I'd like to thank Ludmil Katzarkov, Tony Pantev, and Morgan Brown for stimulating conversations regarding the subject matter of this paper.


\section{The weight filtration for a snc log Calabi-Yau pair}\label{sect:lcy}

In this section, we prove Theorem \ref{thm:mainintro}(1). Throughout this paper, we will assume that the reader is familiar with the basic formalism of mixed Hodge structures. We will introduce facts as needed. For a formal introduction, the reader may consult the book of Peters and Steenbrink \cite{pet-st} or the original papers of Deligne \cite{deligne2,deligne}.

\subsection{Review of work of El Zein and N\'emethi}\label{sect:elzn}


According to Deligne \cite{deligne2}, if $U$ is any  quasiprojective variety over the complex numbers, then its cohomology groups $H^*(U;\mathbb{Q})$ come equipped with a mixed Hodge structure.
\begin{defn}
Let $V$ be a rational vector space, and let $W_\bullet$ be an ascending filtration and let $F^\bullet$ be a descending filtration on $V \otimes \mathbb{C}$. We say that $(V,W_\bullet,F^\bullet)$ is a {\em mixed Hodge structure} if the filtration on $\Gr^{W\otimes \mathbb{C}}_i = (W_i\otimes\mathbb{C})/(W_{i-1}\otimes \mathbb{C})$ induced by $F^\bullet$ is a pure Hodge structure. 
\end{defn}
In this paper, we will focus on the case where $U$ is smooth, in which case we may always take a compactification $X$ of $U$ so that $Y = X \setminus U$ is simple normal crossings. We also have a nondegenerate pairing  between $H^k(U;\mathbb{Q})$ and $H_k(U;\mathbb{Q})$ which induces a mixed Hodge structure on $H_k(U;\mathbb{Q})$ for any smooth quasiprojective variety. Our convention in this article is that $W_{-i}H_k(X \setminus Y ;\mathbb{Q})$ is the subset of $H_k(X\setminus Y;\mathbb{Q})$ composed of $k$-cycles $\xi$ so that 
\[
\int_\xi \omega = 0
\]
for all $\omega \in W_iH^k(X\setminus Y;\mathbb{Q})$. In \cite{elz-n}, El Zein and N\'emethi have identified homological cycles spanning each component of the weight filtration on a smooth noncompact variety via ``generalized Leray cycles''. We will recall the relevant parts of their work. 

We let $Y_1,\dots, Y_n$ be the irreducible components of $Y$, and let $Y_I = \cap_{i \in I} Y_i$. As a convention, we let $Y^0 = X$. We let $Y^i = \cup_{|I| = i} Y_I$, and we let $\mathbf{n}:\widetilde{Y}^i \rightarrow Y^i$ be the normalization of $Y^i$. An important point in \cite[Section 1.2]{elz-n} is that one can construct a tubular neighborhood of $Y$ within $X$. For $i \in \{1,\dots ,n\}$, we let $Z_i$ be the real oriented blow up of $Y_i$ in $X$. Let $\Pi_i : Z_i \rightarrow X$ be the real oriented blow up map. Then $Z$ is a manifold with boundary, and in fact this boundary (which is $\Pi_i^{-1}(Y_i)$) is homeomorphic to the normal $S^1$ bundle of $Y_i$ in $X$. We let $Z$ be the fiber product of the set of maps $\Pi_i$ over $X$. Then $Z$ is homeomorphic to the complement of a tubular neighborhood $V$ of $Y$ and there is a natural map $\Pi : Z \rightarrow X$. According to \cite{elz-n}, $Z$ is a deformation retract of $X \setminus Y$. We view $Z$ as a submanifold (with corners) of $X$. If $p$ is a point in $Y^i\setminus Y^{i+1}$, then $\Pi^{-1}(p)$ is a copy of $(S^1)^i$.

The goal of \cite{elz-n} is to produce a generalized Leray map, by which we mean a way to lift cycles from $Y$ to cycles in $X \setminus Y$, and to understand the relation between this Leray map and the weight filtration. More details may be found \cite[Section 2.19]{elz-n}.
\begin{defn}
We let $C_q^\pitchfork(\widetilde{Y}^k)$ denote rational $q$-cycles on $\widetilde{Y}^k$ which intersect the subset of $\widetilde{Y}^k$ corresponding to $\widetilde{Y}^{k+1}$ transversally. 
\end{defn}
\begin{defn}
Let $\xi$ be a $q$-cycle in $C_q^\pitchfork(\widetilde{Y}^k)$. If $\sigma_0$ is $\mathbf{n}(\xi) \setminus (Y^{k+1}\cap \mathbf{n}(\xi))$, then we define $L_k(\xi)$ to be the closure of $\Pi^{-1}(\sigma_0)$ in $Z$. 
\end{defn}

\begin{defn}
We define the double complex $A_{s,t}(X \setminus Y) = C^\pitchfork_{t+2s}(\widetilde{Y}^{-s})$ with $s\leq 0$ and $t+2s \geq 0$, where the differentials are given as
\[
\partial : C^\pitchfork_{t+2s}(\widetilde{Y}^{-s}) \longrightarrow  C^\pitchfork_{t-1+2s}(\widetilde{Y}^{-s}), \quad \cap : C^\pitchfork_{t+2s}(\widetilde{Y}^{-s}) \longrightarrow  C^\pitchfork_{t+2s-2}(\widetilde{Y}^{-s+1}).
\]
Here, $\partial$ denotes the standard boundary map, and $\cap$ denotes transversal intersection. Then $D = \partial + \cap$ is the differential of the total complex $\mathrm{Tot}_\bullet(A_{s,t}(X \setminus Y))$.
\end{defn}
The Leray maps $L_q :C_t^\pitchfork(\widetilde{Y}_q) \rightarrow C_{t+q}(\partial Z)$ extend to a collection of maps $L:\mathrm{Tot}_\bullet(A_{s,t}(X\setminus Y)) \rightarrow C_\bullet(\partial Z)$, which (by \cite[Corollary 2.21]{elz-n}) is a morphism of complexes. Combining this with the pushforward from $\partial Z$ to $X\setminus Y$, we induce a map from the hypercohomology of $\mathrm{Tot}_\bullet(A_{s,t}(X\setminus Y))$ to the homology of $X\setminus Y$. 
\begin{proposition}[{\cite[Proposition 3.8]{elz-n}}]\label{prop:ezn}
The hypercohomology of $\mathrm{Tot}_\bullet(A_{s,t}(X \setminus Y))$ is isomorphic to $\HH_*(X\setminus Y;\mathbb{Q})$. The filtration on $\HH_*(X\setminus Y;\mathbb{Q})$ induced by truncation on $\mathrm{Tot}_\bullet(A_{s,t}(X\setminus Y))$ is the weight filtration on homology as defined above.
\end{proposition}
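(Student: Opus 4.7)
The plan is to prove the two assertions in parallel using the column filtration on the double complex, indexed by $s$. First, I would construct the map $L_* : \mathrm{Tot}_\bullet(A_{s,t}(X\setminus Y)) \to C_\bullet(X \setminus Y)$ as the composition of the generalized Leray lift to $\partial Z$ and the homotopy equivalence $\partial Z \simeq X \setminus Y$ recalled in Section~2.1, and argue it is a quasi-isomorphism by a local-to-global reduction.

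The local model is decisive. Near a point $p \in Y_I \setminus Y^{|I|+1}$ with $|I|=k$, a polydisc chart in which $Y = Z(z_1\cdots z_k)$ exhibits $U \setminus Y \simeq (\Delta^*)^k\times \Delta^{d-k}$, which is homotopy equivalent to $(S^1)^k = \Pi^{-1}(p)$. Locally the double complex is concentrated in the single column indexed by $\widetilde{Y}_I \cap U$: a point-cycle on $\widetilde{Y}_I \cap U$ lifts under $L$ to the full torus fiber (generating $H_k$), and restrictions to the partial boundary strata yield cycles generating the lower-degree homology $\bigwedge^* H_1((S^1)^k)$ in the appropriate way. A Mayer--Vietoris argument with respect to an open cover of $X$ adapted to the stratification $Y^0 \supset Y^1 \supset \cdots$ then globalizes the local quasi-isomorphism.

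For the filtration comparison, I would analyze the spectral sequence of the column filtration on $\mathrm{Tot}_\bullet(A_{s,t})$. Its $E_1$-page is $E_1^{s,t} = H_{t+2s}(\widetilde{Y}^{-s};\mathbb{Q})$, since the transversality constraint becomes inconsequential after passing to $\partial$-homology of a smooth manifold, and the $d_1$-differential is the Gysin map induced by transversal intersection $\cap$. Applying Poincar\'e duality on each smooth proper $\widetilde{Y}^{-s}$ converts this into Deligne's weight spectral sequence computing $H^*(X\setminus Y;\mathbb{Q})$, with the Gysin map becoming the Deligne residue and the indexing matching up to the standard shift in $A_{s,t}$. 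Since Deligne's spectral sequence abuts to the weight filtration on cohomology and the weight filtration on homology is defined by duality with respect to the pairing $\int$, the column filtration on the total complex abuts to $W_\bullet H_*(X\setminus Y;\mathbb{Q})$.

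The main obstacle lies in the chain-level bookkeeping behind the Leray construction: one must verify that $L$ preserves transversality, that it commutes with the total differential $D = \partial + \cap$ on the nose, and that the isomorphism it induces on $E_1$ is genuinely Poincar\'e dual to Deligne's residue map. Each of these amounts to a transversality argument relying on the explicit manifold-with-corners structure of $\partial Z$ assembled from the real oriented blowups $Z_i \to X$. A secondary subtlety is to upgrade the abstract isomorphism of spectral sequences to an actual equality of filtrations on $H_*(X\setminus Y;\mathbb{Q})$ — this requires the comparison to come from a filtered quasi-isomorphism at the chain level, and not merely from agreement on associated graded pieces, since otherwise one only obtains the weight filtration up to the degeneration of the spectral sequence.
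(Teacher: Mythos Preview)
The paper does not prove this proposition; it is quoted verbatim from El~Zein and N\'emethi \cite[Proposition 3.8]{elz-n} and no argument is given. So there is no ``paper's own proof'' to compare against; your outline is, in spirit, the strategy that \cite{elz-n} carries out.

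That said, there is a genuine error in your setup. You invoke ``the homotopy equivalence $\partial Z \simeq X \setminus Y$'', but this is false: it is $Z$, the full manifold-with-corners obtained by real oriented blowup, that deformation retracts onto $X \setminus Y$; its boundary $\partial Z$ is the link of $Y$ and typically has a very different homotopy type (already for $X = \mathbb{P}^1$ and $Y$ a point, $X\setminus Y \simeq \mathrm{pt}$ while $\partial Z \cong S^1$). The Leray map $L$ lands in $C_\bullet(\partial Z)$ only on the columns $s < 0$; the column $s=0$ is $C_t^\pitchfork(X)$ itself, and $L_0$ sends a transversal chain on $X$ to its trace in $Z$, not in $\partial Z$. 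Without the $s=0$ column landing in $Z$ you cannot hope for a quasi-isomorphism, since classes in $H_*(X\setminus Y)$ coming from $H_*(X)$ (the weight-$k$ part of $H_k$) are invisible in $\partial Z$. Your local analysis has the same blind spot: near a point of $Y_I$ with $|I|=k$ the double complex is \emph{not} concentrated in the single column $s=-k$; every $\widetilde{Y}^j$ with $0\le j\le k$ meets the chart, contributing $\binom{k}{j}$ contractible components to column $s=-j$, and it is the full Koszul-type complex in the $\cap$-direction that computes $H_*\bigl((S^1)^k\bigr)$.

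Your second half is on firmer ground. The column-filtration spectral sequence does have $E_1^{s,t} = H_{t+2s}(\widetilde{Y}^{-s};\mathbb{Q})$, and Poincar\'e duality on each smooth compact $\widetilde{Y}^{-s}$ identifies this with Deligne's $E_1$-page for $H^*(X\setminus Y;\mathbb{Q})$, carrying $\cap$ to the residue/Gysin map. Since Deligne's sequence degenerates at $E_2$, the filtered quasi-isomorphism furnished by $L$ (once its target is corrected to $C_\bullet(Z)$) does upgrade the $E_1$-match to an equality of filtrations, exactly as you anticipate in your last paragraph.
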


For a general closed cycle $c_{s,t} \in A_{s,t}(X \setminus Y)$, \cite[\S 4]{elz-n} explains how to obtain a homologous closed cycle $c^\infty_{s,t} = c_{s,t} + c_{s-1,t+1}  + \dots + c_{0,s+t}$ so that the cycles $L c_{s,t}^\infty$ generate $W_{-t}\HH_{s+t}(X \setminus Y)$ (\cite[\S 5.10]{elz-n}). For our purposes, the exact construction of $c^\infty_{s,t}$ is irrelevant, since we are interested in the case where $s = 0$, hence $c^\infty_{0,t} = c_{0,t}$. Proposition \ref{prop:ezn} then says that the cycles $L c_{0,t}$ generate $W_{-2k}H_k(X\setminus Y;\mathbb{Q})$. Let us now give an explicit description of what these cycles look like.
\begin{example}
If $[p] \in C_0^\pitchfork(\widetilde{Y}^{k})$, then we may assume that $p$ is a point away from ${Y}^{k+1}$. Then the cycle $L_k([p])$ can be expressed as follows. Choose complex coordinates $z_1,\dots, z_d$ in a neighbourhood of $p$ so that $Y = Z(z_1\dots z_k)$ and $p = (0,0,\dots, 0)$. Then the cycle $L_k([p])$ is homologous to the submanifold 
\[
\{(\exp(\mathtt{i} \theta_1), \dots, \exp(\mathtt{i} \theta_{k}), 0 , \dots, 0): \theta_i \in [0,2\pi)\}
\]
\end{example} 
\begin{defn}
Let $p$ be a point in $Y_I \setminus (Y_I \cap Y^{i+1})$. We will denote by $\T_{I,p}$ the $|I|$-torus $L_{|I|}([\mathrm{pt}])$ in $X\setminus Y$.  For any two such points, $p_1$ and $p_1$, $\T_{I,p_1}$ and $\T_{I,p_2}$ are homotopic. We will use the notation $\T_I$ to refer to any member of this homotopy class.
\end{defn}
The following statement then follows directly from this discussion.
\begin{proposition}[{\cite[\S 5.10]{elz-n}}]\label{prop:elzne}
For every $k$, the group $W_{-2k}\HH_k(X \setminus Y;\mathbb{Q})$ is generated by the classes $[\T_I]$ as $I$ ranges over all $I \subset \{1,\dots, n\}$ so that $|I| = k$ and $Y_I \neq \emptyset$.
\end{proposition}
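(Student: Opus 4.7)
The proposition follows almost immediately from Proposition~\ref{prop:ezn} combined with the explicit computation of the Leray lift of a point given in the Example above; my plan is essentially bookkeeping, unfolding what the El Zein--N\'emethi machinery already provides.

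First, I would identify which cell of the double complex $A_{s,t}$ contributes to the lowest weight piece $W_{-2k}\HH_k(X \setminus Y;\mathbb{Q})$. The constraints ``$(-t)$ most negative for a class of degree $k$'' force the bottom component of a $D$-closed chain to live in $A_{-k,2k} = C^\pitchfork_0(\widetilde{Y}^k)$, the space of rational $0$-cycles on the normalized $k$-fold intersection. Every such $0$-cycle is a rational combination of points $p \in \widetilde{Y}^k$. By the Example, the Leray lift of $[p]$ is homologous in $X \setminus Y$ to $\T_{I,\mathbf{n}(p)}$, where $I \subset \{1,\dots,n\}$ is the index set with $|I|=k$ determined by $\mathbf{n}(p) \in Y_I \setminus Y^{k+1}$. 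Since points in the same connected component of $Y_I \setminus Y^{k+1}$ yield homotopic tori, the image of $L$ on $H_0(\widetilde{Y}^k;\mathbb{Q})$ inside $\HH_k(X \setminus Y;\mathbb{Q})$ is spanned by the classes $[\T_I]$ with $|I|=k$ and $Y_I \neq \emptyset$. Combined with Proposition~\ref{prop:ezn}, this is exactly what the proposition asserts.

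The main obstacle, in a fully self-contained write-up, is verifying that the corrective terms in the El Zein--N\'emethi spread $c^\infty_{-k,2k} = c_{-k,2k} + c_{-k+1,2k-1} + \dots$ do not shift the homology class of the Leray image in $X \setminus Y$. The parenthetical remark in the excerpt to the effect that ``only the bottom term matters'' at this extreme corner is precisely this statement. Topologically it reflects the fact that the correction terms live as higher-dimensional chains over deeper strata, and their Leray images retract to the boundary torus bundles of those strata and so bound in an annular neighbourhood of $Y$. Once this is confirmed, the combination of the truncation identification in Proposition~\ref{prop:ezn} with the coordinate calculation in the Example yields the proposition without further work.
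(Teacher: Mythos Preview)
Your proposal is correct and follows essentially the same route as the paper: both deduce the proposition directly from Proposition~\ref{prop:ezn} together with the identification (from the Example) of the Leray lift of a point on $\widetilde{Y}^k$ as the torus $\T_I$. The paper gives no separate formal proof; the sentence immediately preceding the proposition is the entire argument.

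One small sharpening of your ``obstacle'' paragraph: you suggest that the correction terms in the spread $c^\infty$ might be nonzero chains whose Leray images happen to bound in a neighbourhood of $Y$. In fact, at this extreme corner there are no correction terms at all. A $0$-chain on $\widetilde{Y}^k$ is already $D$-closed on the nose: it lies in the kernel of $\partial$ because it is a $0$-chain, and in the kernel of $\cap$ because the target $C^\pitchfork_{-2}(\widetilde{Y}^{k+1})$ is zero. Hence $c^\infty_{-k,2k} = c_{-k,2k}$ with nothing added, which is exactly the paper's one-line observation that the spread collapses to its leading term. (The paper phrases this as ``$s=0$'', which at first glance conflicts with your bidegree $(-k,2k)$; this appears to be an indexing slip in the paper's description of the spread, but the substance is the same.)
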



\subsection{Consolidating tori}\label{sect:consolidation}
Our goal now is to show that the tori $\T_I$ can be drawn from a common source. In other words, we will show that there are several homotopy classes of tori coming from the ``deepest'' strata of $Y$ so that the image of their pushforwards in homology contain the homology classes of all tori $\T_I$.
\begin{proposition}\label{prop:restr}
Let $I \subseteq J \subseteq \{1,\dots, n\}$.
\begin{enumerate}
\item $\T_{I}$ is homotopic to a sub-torus of $\T_J$.
\item As $I$ ranges over all subsets of $J$ of cardinality $i$, the tori $\T_{I}$ form a generating set for $\HH_{|I|}(\T_J;\mathbb{Z})$.
\end{enumerate} 
\end{proposition}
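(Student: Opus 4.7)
The plan is to prove both parts by an explicit local-coordinate analysis near a point of the deepest stratum $Y_J$, using the same description of the tori $\T_I$ given in the example preceding the proposition. After reordering, we may assume $J = \{1,\dots, |J|\}$. Pick a point $p \in Y_J \setminus (Y_J \cap Y^{|J|+1})$ and local coordinates $z_1,\dots, z_d$ centered at $p$ so that $Y_j = Z(z_j)$ for $j \in J$, and so that no other component $Y_k$ with $k \notin J$ meets this coordinate chart. By the example, the torus $\T_J = \T_{J,p}$ can be taken to be
\[
\T_J = \{(\varepsilon e^{\mathtt{i}\theta_1}, \dots, \varepsilon e^{\mathtt{i}\theta_{|J|}}, 0, \dots, 0) : \theta_j \in [0,2\pi)\}
\]
for some small $\varepsilon > 0$.

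For part (1), I would produce a point $q \in Y_I \setminus Y^{|I|+1}$ near $p$ by moving along the coordinates indexed by $J \setminus I$: take $q = (0,\dots,0,\varepsilon,\dots,\varepsilon,0,\dots,0)$ with the nonzero entries in positions $|I|+1,\dots,|J|$. Then $q$ lies on $Y_j$ exactly for $j \in I$, provided the chart is chosen small enough so that no component $Y_k$ with $k \notin J$ meets it. The torus $\T_{I,q}$ is then
\[
\T_{I,q} = \{(\varepsilon e^{\mathtt{i}\theta_1}, \dots, \varepsilon e^{\mathtt{i}\theta_{|I|}}, \varepsilon, \dots, \varepsilon, 0, \dots, 0) : \theta_j \in [0,2\pi)\},
\]
which is exactly the sub-torus of $\T_J$ obtained by setting $\theta_j = 0$ for $j \in J \setminus I$. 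Since $\T_{I,q}$ is well-defined only up to homotopy in $X \setminus Y$, and since $q$ can be taken arbitrarily inside the connected stratum $Y_I \setminus Y^{|I|+1}$, this realizes the homotopy class $\T_I$ as a sub-torus of $\T_J$. The small point to check is that the path of points $q_t = (0,\dots,0,t a_{|I|+1},\dots,t a_{|J|},0,\dots,0)$ stays in $Y_I \setminus Y^{|I|+1}$ for $t \in (0,1]$, which is immediate from the coordinate description.

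For part (2), the construction in part (1) shows that for each subset $I \subseteq J$ of cardinality $i$, the class of $\T_I$ in $H_i(\T_J;\mathbb{Z})$ is represented by the standard coordinate $i$-sub-torus of $\T_J \cong (S^1)^{|J|}$ in which the circles indexed by $J \setminus I$ are collapsed to points. These standard coordinate sub-tori are precisely the $\binom{|J|}{i}$ generators of the free abelian group $H_i((S^1)^{|J|};\mathbb{Z})$ under the K\"unneth decomposition, which proves the claim.

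The main obstacle, such as it is, lies in part (1): one must verify that the explicit homotopy from $\T_{I,q}$ (viewed as a sub-torus of $\T_J$) to an arbitrary representative of the homotopy class $\T_I$ in $X \setminus Y$ stays in the complement of $Y$. This reduces to the observation that within a sufficiently small chart around $p$ meeting no additional components, the complement $X \setminus Y$ is just $(\mathbb{C}^*)^{|J|} \times \mathbb{C}^{d-|J|}$, in which all the relevant cycles and homotopies are manifestly supported.
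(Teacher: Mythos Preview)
Your proof is correct and follows essentially the same approach as the paper: choose local coordinates near a point $p$ of the deepest stratum $Y_J$, realize $\T_J$ as the standard $|J|$-torus, produce a nearby point $q\in Y_I\setminus Y^{|I|+1}$ by making the $J\setminus I$ coordinates nonzero, and observe that $\T_{I,q}$ is the coordinate sub-torus; part (2) is then K\"unneth. The only cosmetic slip is that your explicit formula for $q$ (nonzero entries in positions $|I|+1,\dots,|J|$) presumes $I=\{1,\dots,|I|\}$, which is fine for part (1) after a further reordering but should be stated for general $I\subseteq J$ when you invoke it in part (2); your verbal description there already does this correctly.
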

\begin{proof}
Let us assume that $J = \{1,\dots, |J|\}$, which we can always do by reordering $Y_1,\dots, Y_n$. Let $p \in Y_J \setminus (Y_J \cap  Y^{|J|+1})$, and let $q \in p \in Y_I \setminus (Y_I \cap  Y^{|I|+1})$. First we note that the point we choose to construct $\T_J$ and $\T_{I}$ is irrelevant, since we may deform any fiber over a point in a connected component of $\widetilde{Y}_\ell$ to any other point in the same connected component. Therefore, we can assume that $p$ and $q$ are contained in a polydisc $B$ so that in these coordinates, $Y$ is written as $z_1 \dots z_{|J|} = 0$, and $Y_j \cap B$ is given by $z_i = 0$ for all $j \in J$. We may assume that $ p = (0,\dots,0)$, and that $q$ has coordinate $(\zeta_1,\dots, \zeta_d)$ where $\zeta_j = 0$ if $j \in I$, $\zeta_j \neq 0$ if $j \in J \setminus I$ and $\zeta_j$ is arbitrary if $j \notin J$. Then $\T_J$ is homotopic to
\[
\{ (\exp(\mathtt{i} \theta_1), \dots , \exp(\mathtt{i} \theta_{|J|}), 0, \dots, 0), \,\, \theta_1,\dots, \theta_{|J|} \in S^1 \}
\]
and $\T_I$ is homotopic to the torus whose coordinates are
\[
\begin{cases} \exp({\tt i} \theta_j) &  \mathrm{if } \quad j \in I \\
\zeta_i & \quad \mathrm{otherwise}.
\end{cases}
\]
Here, $\theta_j$ are constants in $[0, 2\pi)$. Then (1) can be seen immediately by letting $\zeta_j$ go to 1 for all $j \notin I$. From the K\"unneth theorem, the sub-tori of $\T_J$ obtained by letting some subset of $\{\theta_j\}_{j \in J}$ of size $|J|-|I|$ be 1 will span $H_{|I|}(\T_J;\mathbb{Z})$. Therefore, (2) follows.
\end{proof}

This will allow us to prove the main theorem of this section. 
\begin{defn}
We say that a stratum $Y_J$ is {\em minimal} if there is no $J'$ so that $J \subsetneq J' \subset \{1,\dots, n\}$ and $Y_{J'} \neq \emptyset$. If $Y_{J}$ is minimal, then we say that the corresponding torus $\T_J$ is {\em profound}. Let $\mathrm{Prf}(X,Y)$ be a set consisting of one profound torus $\T_J$ for each minimal stratum $Y_J$ of $Y$.
\end{defn}
By definition, each $Y_J$ contains at least one minimal stratum, hence by Proposition \ref{prop:restr}, each torus $\T_K$ is homotopic to a subtorus of a profound torus.
\begin{theorem}\label{thm:prof}
For each $k$,
\[
W_{-2k}\HH_k(X\setminus Y;\mathbb{Q}) = \mathrm{im}\left( \bigoplus_{\T_J \in \mathrm{Prf}(X,Y)} \HH_k(\T_J;\mathbb{Q}) \longrightarrow \HH_k(X \setminus Y;\mathbb{Q})\right)
\]
or dually,
\[
W_{2k-1}\HH^k(X \setminus Y;\mathbb{Q}) = \mathrm{ker}\left( \HH^k(X \setminus Y;\mathbb{Q}) \longrightarrow \bigoplus_{\T_J \in \mathrm{Prf}(X,Y)} \HH^k(\T_J;\mathbb{Q})\right).
\]
\end{theorem}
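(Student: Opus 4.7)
The plan is to deduce Theorem \ref{thm:prof} directly from Propositions \ref{prop:elzne} and \ref{prop:restr}, which together already contain all the geometric content needed. I would work throughout with the homological formulation and then transfer to cohomology at the end via the duality pairing described at the start of Section \ref{sect:elzn}, under which $W_{-2k}H_k(X\setminus Y;\mathbb{Q})$ is the annihilator of $W_{2k-1}H^k(X\setminus Y;\mathbb{Q})$; this immediately converts the image statement into the kernel statement.

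For the containment $\mathrm{im}\bigl(\bigoplus_{\T_J \in \mathrm{Prf}(X,Y)} \HH_k(\T_J;\mathbb{Q})\bigr) \subseteq W_{-2k}\HH_k(X\setminus Y;\mathbb{Q})$, I would fix a profound torus $\T_J$ attached to a minimal stratum $Y_J$ and use Proposition \ref{prop:restr}(2) to exhibit generators of $H_k(\T_J;\mathbb{Q})$ as homology classes of subtori of the form $\T_I$ with $I \subseteq J$ and $|I| = k$. Each such $[\T_I]$ already lies in $W_{-2k}\HH_k$ by Proposition \ref{prop:elzne}, since $Y_I \supseteq Y_J \ne \emptyset$, so the image of $H_k(\T_J;\mathbb{Q}) \to H_k(X\setminus Y;\mathbb{Q})$ sits inside $W_{-2k}\HH_k$. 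The reverse containment is where the adjective \emph{profound} earns its keep: Proposition \ref{prop:elzne} supplies a spanning set for $W_{-2k}\HH_k$ consisting of the $[\T_I]$ with $|I| = k$ and $Y_I \ne \emptyset$, so it suffices to show that each such class is hit by some $H_k(\T_J;\mathbb{Q})$ with $\T_J \in \mathrm{Prf}(X,Y)$. Given such an $I$, I would enlarge it by repeatedly adjoining indices $j \notin I$ while keeping the intersection nonempty; since there are only finitely many components of $Y$, this process terminates at some $J \supseteq I$ for which $Y_J$ is a minimal stratum. Proposition \ref{prop:restr}(1) then realizes $\T_I$ as a subtorus of the profound torus $\T_J$, so $[\T_I]$ lies in the image of $H_k(\T_J;\mathbb{Q}) \to H_k(X\setminus Y;\mathbb{Q})$, finishing the proof.

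The only subtlety worth flagging is that the construction of $\T_J$ a priori depends on a choice of basepoint in $Y_J \setminus Y^{|J|+1}$, but the argument used in Proposition \ref{prop:restr} also shows that any two such choices yield homotopic tori in $X\setminus Y$, so the image in $H_k(X\setminus Y;\mathbb{Q})$ is unambiguous and $\mathrm{Prf}(X,Y)$ is well defined up to this harmless choice. I do not anticipate any real obstacle: once Propositions \ref{prop:elzne} and \ref{prop:restr} are on the table, the theorem is a short piece of combinatorial bookkeeping about how the tori attached to strata fit inside the tori attached to deeper strata.
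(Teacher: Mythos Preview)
Your proposal is correct and follows essentially the same argument as the paper: both directions are obtained from Propositions~\ref{prop:elzne} and~\ref{prop:restr} in exactly the way you describe, and the cohomological statement is deduced by duality. The only differences are cosmetic (the paper treats the two containments in the opposite order and is slightly terser about enlarging $I$ to a minimal $J$).
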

\begin{proof}
As $K$ ranges over all subsets of $\{1,\dots, n\}$ of cardinality $k$ so that $Y_K$ is nonempty, the tori $\T_K$ span $W_{-2k}\HH_k(X \setminus Y;\mathbb{Q})$ by Proposition \ref{prop:elzne}. If $\T_K$ is profound, then the homology class $[\T_K]$ is  the image of the pushforward $\HH_k(\T_K;\mathbb{Q}) \rightarrow \HH_k(X \setminus Y;\mathbb{Q})$. If $\T_K$ is not profound, then by Proposition \ref{prop:restr}(1), there is a deformation of $\T_K$ to a subtorus of a profound torus $\T_L$ with $K \subset L$, hence the homology class $[\T_K]$ is in the image of the pushforward map $\HH_k(\T_L;\mathbb{Q}) \rightarrow \HH_k(X\setminus Y;\mathbb{Q})$. Thus $W_{-2k}\HH_k(X \setminus Y;\mathbb{Q})$ is contained in the image of $\HH_k(\T_L;\mathbb{Q})$ as $\T_L$ ranges over all elements of $\mathrm{Prf}(X,Y)$.

On the other hand, if $\T_L$ is a profound torus, then $Y_L$ contains all $Y_I$ for $I \subseteq L$. Therefore, by Proposition \ref{prop:restr}(2), the image of $\HH_k(\T_L;\mathbb{Q})$ is spanned by classes which are deformations of $[\T_I]$ for $I \subseteq L$. Therefore, the pushforward map $\HH_k(\T_L;\mathbb{Q})$ has image spanned by tori $[\T_I]$ for $I \subseteq L$. Therefore $W_{-k}\HH_k(X \setminus Y)$ contains the image of the pushforward of $\HH_k(\T_L;\mathbb{Q})$ as $\T_L$ ranges over elements of $\mathrm{Prf}(X,Y)$.
\end{proof}


\subsection{Specialization to the Calabi-Yau case}\label{sect:mon}

In this section, we assume that $(X,Y)$ is a log Calabi-Yau pair, or in other words, that $K_X + Y$ is trivial. The geometry of such divisors is well-understood through work of Koll\'ar \cite{kollar}. We will show that in this case, all profound tori are homotopic to one another. Our main tool in this section is the following statement.
\begin{lemma}\label{lemma:deftori1}
Suppose that $p_1 \in Y_{J_1}\setminus (Y_{J_2} \cap Y^{|J_2|+1})$, $p_2 \in Y_{J_2} \setminus (Y_{J_1} \cap Y^{|J_1|+1})$ with $|J_1| = |J_2| = j$, and that there is a rational curve $C$ in $Y^{j-1}$ so that $C \cap Y^j$ is precisely the pair of points $p_1$ and $p_2$. Then the tori $\T_{J_1}$ and $T_{J_2}$ are homotopic. 
\end{lemma}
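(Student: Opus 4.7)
The plan is to exploit the rationality of $C$ to build an explicit homotopy between $\T_{J_1}$ and $\T_{J_2}$ via a family of $j$-tori parameterized by concentric loops on $\widetilde{C} \cong \mathbb{P}^1$. First I would reduce the combinatorial data. Since $C$ is irreducible and contained in $Y^{j-1}$, it lies in some $Y_K$ with $|K| = j-1$. The hypothesis $p_i \in Y_{J_i} \setminus Y^{j+1}$ together with $p_i \in C \subset Y_K$ forces $K \subseteq J_i$, so $J_1 = K \cup \{a\}$ and $J_2 = K \cup \{b\}$; if $a = b$ then $J_1 = J_2$ and there is nothing to prove, so assume $a \neq b$.

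Next, let $\mathbf{n} : \widetilde{C} \to X$ denote the composition of normalization with inclusion, with $\tilde{p}_1, \tilde{p}_2 \in \widetilde{C}$ the preimages of $p_1, p_2$. Using a tubular neighborhood of $C$ in $X$, I would construct a $(j-1)$-torus bundle $E \to \widetilde{C}$ whose fiber at $t$ is a product of small oriented circles around each $Y_k$, $k \in K$, in a chart at $\mathbf{n}(t)$. Each of the $j-1$ circle factors is the unit circle bundle of the pullback line bundle $\mathbf{n}^{*} N_{Y_k/X}$ on $\widetilde{C} \cong S^2$; since every line bundle on $S^2$ restricts to a trivial line bundle on every embedded $S^1 \subset S^2$, fixing an affine coordinate $z$ on $\widetilde{C}$ with $\tilde{p}_1 \mapsto 0$ and $\tilde{p}_2 \mapsto \infty$ and setting $S^1_r = \{|z| = r\}$, the restriction $T_r := E|_{S^1_r}$ is a topological $j$-torus. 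Since $\mathbf{n}(t)$ avoids all $Y_m$ with $m \notin K$ for $t \in \widetilde{C} \setminus \{\tilde{p}_1, \tilde{p}_2\}$, each $T_r$ lies in $X \setminus Y$, and $\{T_r\}_{r > 0}$ is a continuous homotopy of $j$-tori in $X \setminus Y$.

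Finally, I would identify the endpoints. For $r$ small, the transversality of $C$ and $Y_a$ at $p_1$ (which one checks is forced by $p_1$ being an isolated point of $C \cap Y_a$ under the snc assumption) allows one to choose local coordinates $z_1, \ldots, z_d$ around $p_1$ with $Y_a = \{z_1 = 0\}$, the divisors indexed by $K$ being $\{z_{i+1} = 0\}$ for $i = 1, \ldots, j-1$, and $C$ locally the $z_1$-axis. After trivializing $E$ over a neighborhood of $\tilde{p}_1$, the torus reads
\[
T_r = \{(r\exp(\mathtt{i}\phi), \varepsilon \exp(\mathtt{i}\theta_1), \ldots, \varepsilon \exp(\mathtt{i}\theta_{j-1}), 0, \ldots, 0) : \phi, \theta_i \in [0, 2\pi)\},
\]
whose first factor is a loop of winding number one in $\mathbb{C}^* = \mathbb{C} \setminus Y_a$ and is thus homotopic to $\{\varepsilon \exp(\mathtt{i}\theta_0)\}$; the remaining factors match those of $\T_{J_1}$ exactly, so $T_r \simeq \T_{J_1}$. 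A symmetric argument using $w = 1/z$ and local coordinates at $p_2$ yields $T_r \simeq \T_{J_2}$ for $r$ large. The main technical obstacle is precisely this endpoint matching, and in particular the transversality of $C$ with $Y_a$ (resp.~$Y_b$); once that is in hand, the triviality of $S^1$-bundles over $S^1$ and the homotopy classification of loops in $\mathbb{C}^*$ by winding number make the rest routine.
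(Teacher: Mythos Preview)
Your argument is the paper's proof unpacked: where the paper invokes the real oriented blow-up $\Pi:Z\to X$ and observes that $\Pi^{-1}(C)$ is an $(S^1)^{j-1}$-bundle over the real oriented blow-up of $C$ at $\{p_1,p_2\}$ (a cylinder, so its two boundary $j$-tori $\Pi^{-1}(p_i)=\T_{J_i}$ are homotopic), you rebuild that same bundle by hand as $E$ from normal-circle factors and realize the homotopy by sweeping $S^1_r$ across the cylinder.

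The one substantive issue is your parenthetical claim that transversality of $C$ with $Y_a$ is ``forced by $p_1$ being an isolated point of $C\cap Y_a$ under the snc assumption''. The snc condition constrains how the $Y_i$ meet one another, not how an auxiliary curve meets them, and an isolated set-theoretic intersection can have higher multiplicity: take $C=\{z_1=z_{j+1}^2,\ z_{j+2}=\cdots=z_d=0\}\subset Y_K$, which meets $Y_a=\{z_1=0\}$ only at $p_1$ but with order~$2$; then your $T_r$ for small $r$ has $z_1$-coordinate $r^2e^{2\mathtt{i}\phi}$, so it winds twice around $Y_a$ and is homotopic to a degree-$2$ cover of $\T_{J_1}$ rather than to $\T_{J_1}$ itself. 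The paper's proof has exactly the same implicit dependence on transversality (without it, $\Pi^{-1}(C)$ is not the claimed bundle over a cylinder), and in both cases the hypothesis is supplied downstream by Koll\'ar's theorem, which explicitly produces curves meeting $Y^j$ transversally in a pair of points.
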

\begin{proof}
Recall from the discussion in Section \ref{sect:elzn} there is an  embedding of $Z$, the real oriented blow up of $X$ in $Y$, into $X \setminus Y$. We would like to show that $\T_{J_1}$ and $\T_{J_2}$ are homotopic in $Z$, hence they are homotopic in $X \setminus Y$. We use the notation $\Pi: Z \rightarrow X$ to denote the real oriented blow up map. By the assumption that $C$ intersects $Y^{j}$ only in $p_1$ and $p_2$, functoriality of real oriented blow up shows that $\Pi^{-1}(C)$ is identified with an $(S^1)^{j-1}$ fibration over the real oriented blowup of $C$ at $p_1$ and $p_2$. The real oriented blow up of $C$ at $p_1$ and $p_2$ is topologically, the cylinder $S^1 \times [0,1]$. The tori $\Pi^{-1}(p_1)$ and $\Pi^{-1}(p_2)$ are the preimages of $S^1 \times \{0\}$ and $S^1 \times \{1\}$, therefore, they are homotopic in $Z$.
\end{proof}

In \cite[Theorem 10]{kollar}, Koll\'ar prove a general result relating to the $\mathbb{P}^1$ connectedness of log canonical centers of dlt log Calabi--Yau pairs. We will state Koll\'ar's result in the generality needed for the situation at hand. Later on (Section \ref{sect:cycase}) we will describe the consequences of the same result in the case of certain degenerations of Calabi--Yau varieties.

\begin{theorem}[{\cite[Theorem 10]{kollar}}]\label{lemma:kollar1}
Suppose that $(X,Y)$ is log Calabi--Yau, then if $J_1$ and $J_2$ are subsets of $\{1, \dots, n\}$ so that $Y_{J_1}$ and $Y_{J_2}$ are minimal. Then $|J_1| = |J_2| = j$ for some $j$, and there are points $p_1 \in Y_{J_1}$ and $p_2 \in Y_{J_2}$ which are connected by a chain of rational curves $C_1, \dots, C_m$ in $Y^{j-1}$, each having the property that $C_i$ intersects $Y^j$ transversally in a distinct pair of points.
\end{theorem}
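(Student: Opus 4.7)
The plan is to derive the statement as a direct specialization of Koll\'ar's Theorem 10 \cite{kollar}, together with adjunction. First, since $(X,Y)$ is snc log Calabi--Yau it is automatically dlt, so the strata $Y_I$ are precisely the log canonical centers of $(X,Y)$, and $Y_J$ is minimal in the sense defined above exactly when $J$ corresponds to a maximal face of the dual complex $\mathcal{D}(X,Y)$.

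The equality $|J_1|=|J_2|$ would be deduced from the fact that $\mathcal{D}(X,Y)$ is a pseudomanifold for a dlt log Calabi--Yau pair, and in particular is pure-dimensional: all maximal faces share a common dimension $j-1$, so $|J|=j$ for every minimal $J$. The chain of rational curves itself is then the geometric content of Koll\'ar's Theorem 10 applied to $(X,Y)$, which produces a chain $C_1,\dots,C_m$ inside $Y^{j-1}$ joining a point of $Y_{J_1}$ to a point of $Y_{J_2}$.

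The remaining task is to verify the refined conclusion that each $C_i$ meets $Y^j$ transversally in a pair of distinct points. Transversality is automatic from the snc hypothesis, since any intersection of $C_i$ with a deeper stratum sits inside a coordinate patch in which $Y$ is a union of coordinate hyperplanes. For the degree-two count, I would apply adjunction on each $(j-1)$-dimensional stratum $Y_{K_i}$ containing $C_i$: the pair $(Y_{K_i}, Y_{K_i}\cap Y^j)$ is again snc log Calabi--Yau of lower dimension, so $(K_X+Y)|_{C_i}=0$, which combined with $C_i \simeq \mathbb{P}^1$ forces $\deg(C_i \cap Y^j)=2$ via the usual adjunction computation on $\mathbb{P}^1$.

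The main obstacle is essentially bookkeeping rather than new geometric input: one must arrange the chain so that consecutive $C_i$ and $C_{i+1}$ share the intended minimal stratum among their two endpoints, and so that the two endpoints of each $C_i$ are genuinely distinct rather than a single minimal stratum counted with multiplicity. Both can be ensured by selecting a non-backtracking path in $\mathcal{D}(X,Y)$, which exists because the pseudomanifold $\mathcal{D}(X,Y)$ is connected; if some $C_i$ were to come back to the same minimal stratum, the Koll\'ar--Xu structure of the dual complex would let us shorten the chain until only genuinely distinct endpoints remain.
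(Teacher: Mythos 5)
This statement has no proof in the paper: the author states it verbatim as a specialization of \cite[Theorem 10]{kollar} and takes the full package (equidimensionality of minimal strata, the chain of smooth rational curves, and the ``two distinct transverse points'' condition) directly from Koll\'ar's notion of a standard $\mathbb{P}^1$-link between minimal log canonical centers. You correctly identify the source, but then you attempt to re-derive the quantitative parts from adjunction, and that part of the argument does not close.

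The specific gap is in the degree count. Writing $C_i \subset Y_{K_i}$ with $|K_i| = j-1$, adjunction and $K_X + Y \sim 0$ give $\bigl(K_{Y_{K_i}} + Y^j|_{Y_{K_i}}\bigr)\cdot C_i = 0$, hence $\deg\bigl(Y^j|_{Y_{K_i}}\cdot C_i\bigr) = -K_{Y_{K_i}}\cdot C_i$. For $C_i \cong \mathbb{P}^1$ inside a smooth variety $Y_{K_i}$ of dimension $>1$, one has $-K_{Y_{K_i}}\cdot C_i = 2 + \deg N_{C_i/Y_{K_i}}$; so you only get degree $2$ after knowing $\deg N_{C_i/Y_{K_i}} = 0$. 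That is not a formal consequence of the snc log CY hypothesis on $(X,Y)$ --- it is exactly part of the extra structure that Koll\'ar's Mori-theoretic construction supplies (the $C_i$ are fibers of contractions of extremal rays on a stratum, not arbitrary rational curves). Similarly, ``degree $2$'' is compatible with a single tangential intersection point; ruling that out is again built into the definition of a standard $\mathbb{P}^1$-link rather than something you can extract from the snc condition, since $C_i$ need not be a stratum of $Y$ and can a priori meet $Y^j \cap Y_{K_i}$ tangentially. The pseudomanifold argument for $|J_1|=|J_2|$ has the same flavor: it is true, but it is itself a consequence of Koll\'ar's theorem (the two ends of a standard $\mathbb{P}^1$-link have the same dimension), not an independent input. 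In short, the right move --- and the one the paper makes --- is to cite Koll\'ar's theorem as a black box; your attempt to rebuild its fine structure from $K_X+Y\sim 0$ alone is where the argument breaks.
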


We are now equipped to prove the main theorem of this section.

\begin{theorem}\label{thm:main}
Let $(X,Y)$ be an snc log Calabi--Yau pair. Let $\T = \T_J$ for any maximal stratum $Y_J$. Then
\[
W_{2k-1}\HH^k(X \setminus Y;\mathbb{Q}) \cong \mathrm{ker}( \HH^k(X \setminus Y;\mathbb{Q}) \longrightarrow \HH^k(\T;\mathbb{Q})).
\]
\end{theorem}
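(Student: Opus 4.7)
The plan is to combine the results already established (Theorem \ref{thm:prof}, Lemma \ref{lemma:deftori1}, and Theorem \ref{lemma:kollar1}) to show that in the log Calabi--Yau setting all profound tori fall into a single homotopy class, which immediately collapses the direct sum appearing in Theorem \ref{thm:prof} to a single summand.

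First, I would invoke Theorem \ref{thm:prof} to reduce the claim to showing that every $\T_{J'} \in \mathrm{Prf}(X,Y)$ is homotopic in $X \setminus Y$ to the fixed profound torus $\T = \T_J$. Once this is established, the maps $H^k(X\setminus Y;\mathbb{Q}) \to H^k(\T_{J'};\mathbb{Q})$ and $H^k(X\setminus Y;\mathbb{Q}) \to H^k(\T;\mathbb{Q})$ share the same kernel for every $J'$, so the product map from Theorem \ref{thm:prof} has the same kernel as the map to $H^k(\T;\mathbb{Q})$ alone, which is exactly what we need to prove.

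To verify the homotopy claim, let $\T_{J_1}$ and $\T_{J_2}$ be two arbitrary profound tori. By Kollár's Theorem \ref{lemma:kollar1}, applied to the log Calabi--Yau pair $(X,Y)$, the minimal strata $Y_{J_1}$ and $Y_{J_2}$ have the same codimension $j$, and there are points $p_1 \in Y_{J_1}$, $p_2 \in Y_{J_2}$ joined by a chain of rational curves $C_1, \dots, C_m \subset Y^{j-1}$ such that each $C_i$ meets $Y^j$ transversally in a pair of distinct points. Label the successive intersection points $q_0 = p_1, q_1, \dots, q_{m-1}, q_m = p_2$, and let $q_i \in Y_{K_i}$ where $K_i \subset \{1,\dots,n\}$ has $|K_i| = j$; since $j$ is the maximal stratum codimension, each $Y_{K_i}$ is automatically minimal. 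The curve $C_i$ then meets $Y^j$ exactly in the pair $\{q_{i-1}, q_i\}$, so Lemma \ref{lemma:deftori1} yields $\T_{K_{i-1}} \simeq \T_{K_i}$ in $X\setminus Y$. Concatenating these homotopies gives $\T_{J_1} \simeq \T_{J_2}$, so $\mathrm{Prf}(X,Y)$ consists of a single homotopy class of embedded tori, completing the proof.

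The main obstacle, in some sense, is not in this final assembly but in the two ingredients it relies on: the topological local-to-global cycle computation of El Zein--Némethi (packaged into Theorem \ref{thm:prof}) and the $\mathbb{P}^1$-connectedness of deepest strata due to Kollár (packaged into Theorem \ref{lemma:kollar1}). Given those inputs, the remaining step is a direct chain argument and presents no additional technical difficulty; the only bookkeeping is to confirm that each intermediate $Y_{K_i}$ is indeed a minimal stratum so that the corresponding $\T_{K_i}$ is profound, which is immediate from the maximality of $j$.
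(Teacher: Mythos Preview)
Your proposal is correct and follows essentially the same approach as the paper: invoke Theorem~\ref{thm:prof}, then use Koll\'ar's $\mathbb{P}^1$-connectedness (Theorem~\ref{lemma:kollar1}) together with Lemma~\ref{lemma:deftori1} to deduce that all profound tori lie in a single homotopy class, collapsing the direct sum to one term. Your version supplies a bit more detail on the intermediate points $q_i$ and the minimality of the corresponding strata, but this is exactly the argument the paper gives in compressed form.
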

\begin{proof}
According to Theorem \ref{thm:prof}, we know that $W_{2k-1}H^k(X\setminus Y;\mathbb{Q})$ is the kernel of the restriction to the direct sum of $H^k(\T_I;\mathbb{Q})$ as $I$ ranges over all elements of  $\mathrm{Prf}(X, Y)$. Combining Lemma \ref{lemma:deftori1} and Theorem \ref{lemma:kollar1}, we see that all $\T_I \in \mathrm{Prf}(X,Y)$ are homotopic, hence for any individual $\T \in \mathrm{Prf}(X,Y)$ the kernel is the same as the kernel of the restriction to the direct sum. Therefore the result follows.
\end{proof}
\begin{remark}
Note that if
\begin{enumerate}[($*$)]
\item for any minimal strata $Y_{J_1}$ and $Y_{J_2}$, $|J_1| = |J_2| = j$ for some $j$, and there are points $p_1 \in Y_{J_1}$ and $p_2 \in Y_{J_2}$ which are connected by a chain of rational curves $C_1, \dots, C_m$ in $Y^{j-1}$, each having the property that $C_i$ intersects $Y^j$ transversally in a distinct pair of points,
\end{enumerate}
then the conclusions of Theorem \ref{thm:main} also hold. Theorem \ref{lemma:kollar1} shows that ($*$) is satisfied whenever $(X,Y)$ is a log Calabi--Yau pair. If the P=W conjecture is true, then one expects that there is a snc compactification of $M_\B$ satisfying at least ($*$).
\end{remark}
\begin{corollary}\label{cor:bound}
Let $(X,Y)$ be a log Calabi--Yau pair and let $\delta$ be the codimension of any maximal stratum of $Y$. Then 
\[
\dim \Gr^W_{2k}\HH^k(X \setminus Y;\mathbb{Q})  \leq {\delta \choose k}.
\]
\end{corollary}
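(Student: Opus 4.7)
The plan is to read off the bound directly from Theorem \ref{thm:main} by identifying the top graded piece $\Gr^W_{2k}H^k$ as a subspace of the cohomology of the profound torus $\T$.

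First, I would recall the standard fact (due to Deligne) that for a smooth quasiprojective variety $U = X \setminus Y$ one has $W_{2k}H^k(U;\mathbb{Q}) = H^k(U;\mathbb{Q})$, so that the highest graded piece of the weight filtration is the quotient
\[
\Gr^W_{2k}H^k(X\setminus Y;\mathbb{Q}) = H^k(X\setminus Y;\mathbb{Q})/W_{2k-1}H^k(X\setminus Y;\mathbb{Q}).
\]

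Next, I would apply Theorem \ref{thm:main}: since $(X,Y)$ is log Calabi--Yau, there is a profound torus $\T$ of real dimension $\delta$ (equal to the number of components of $Y$ meeting at a minimal stratum, i.e.\ the codimension of such a stratum) with
\[
W_{2k-1}H^k(X\setminus Y;\mathbb{Q}) = \ker\bigl(H^k(X\setminus Y;\mathbb{Q}) \longrightarrow H^k(\T;\mathbb{Q})\bigr).
\]
Taking the quotient, the induced map
\[
\Gr^W_{2k}H^k(X\setminus Y;\mathbb{Q}) \hookrightarrow H^k(\T;\mathbb{Q})
\]
is therefore injective.

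Finally, since $\T$ is diffeomorphic to $(S^1)^\delta$, the K\"unneth formula gives $\dim H^k(\T;\mathbb{Q}) = \binom{\delta}{k}$, and the desired inequality follows immediately. There is no real obstacle here: the entire content is packaged inside Theorem \ref{thm:main}, and the corollary is just the observation that the codomain of the restriction map is the cohomology of a torus of dimension $\delta$.
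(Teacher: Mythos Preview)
Your argument is correct and is exactly the intended one: the paper states this as an immediate corollary of Theorem~\ref{thm:main} without giving a separate proof, and your write-up simply spells out the one-line deduction via the injection of $\Gr^W_{2k}H^k(X\setminus Y;\mathbb{Q})$ into $H^k(\T;\mathbb{Q})\cong H^k((S^1)^\delta;\mathbb{Q})$.
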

\begin{remark}
In the compact case one can deduce a result similar to Corollary \ref{cor:bound} from the Beauville--Bogomolov decomposition theorem \cite{beauville}. This theorem says that any smooth, compact Calabi--Yau manifold (i.e. a manifold with trivial canonical bundle) has an unramified covering map from a product of varieties 
\[
X_1\times \dots \times X_m \times I_1\times \dots \times I_\ell \times A_1 \times \dots \times A_n
\]
where 
\begin{enumerate}[\quad $\bullet$]
\item $X_i$ are simply connected Calabi--Yau manifolds so that $h^{m,0}(X_i) = 0$ if $i \neq 0, \dim X_i$, and $\dim h^{\dim X,0}(X) = 1$,
\item $I_j$ are irreducible holomorphic symplectic, hence $h^{2n,0}(I_i) =1$ for $0 \leq n \leq \dim I_i/2$, and $h^{m,0}(I_i) = 0$ otherwise, 
\item $A_i$ are abelian varieties. 
\end{enumerate}
Therefore it follows that for any Calabi--Yau manifold $V$, 
\[
\dim \Gr_F^kH^k(V) \leq {\dim V \choose k }.
\]
This raises the question as to whether an analogue of the Beauville--Bogomolov decomposition theorem holds for log Calabi--Yau pairs.
\end{remark}


\section{The weight filtration for a Calabi--Yau degeneration}\label{sect:lmhs}

In this section, we will prove Theorem \ref{thm:mainintro}(2). Our approach is almost identical to our approach to proving Theorem \ref{thm:mainintro}(1), however instead of using work of El Zein and N\'emethi, it will be necessary to modify results of Clemens \cite{clem}.

\subsection{The monodromy weight filtration}

In this section, we will describe the lowest piece of the weight filtration on the homology of a semistable degeneration in concrete terms using the Clemens contraction map.

\begin{defn}
Let $\mathscr{X}$  be a K\"ahler manifold and let $\pi:\mathscr{X} \rightarrow \Delta$ be a proper morphism of relative dimension $d$ where $\Delta$ denotes the unit disc in $\mathbb{C}$ centered at 0. Let $X_t = \pi^{-1}(t)$. We say that $(\mathscr{X},\pi)$ is a {\em semistable degeneration} if $X_t$ is smooth and projective whenever $t \neq 0$, $X_0$ is simple normal crossings, and $\pi$ vanishes to order 1 along each component of $X_0$.
\end{defn}
Associated to any semistable degeneration there is a {\em limit mixed Hodge structure} on the cohomology of $H^*(X_1;\mathbb{Q})$ (see e.g. \cite{schmid,steen,pet-st} for details). In keeping with the philosophy of this paper, we will ignore the Hodge filtration of this mixed Hodge structure and focus only on the weight filtration. The weight filtration of the limit mixed Hodge structure may be identified with the {\em monodromy weight filtration} on $H^*(X_1;\mathbb{Q})$. Let $T_k : H^k(X_1;\mathbb{Q}) \rightarrow H^k(X_1;\mathbb{Q})$ be the monodromy operator associated to a small counterclockwise loop going around $0 \in \Delta$. By the assumption that $(\mathscr{X},\pi)$ is semistable, it follows that $T_k$ is unipotent, hence $N_k = \log T_k$ is nilpotent. As an important remark, $N_k^{k+1} = 0$ for all $k$, and if $k > d$, $N_k^{2d-k+1} = 0$ \cite{clem}.
\begin{defn}\label{defn:mwf}
The {\em monodromy weight filtration} on $H^k(X_1;\mathbb{Q})$ associated to $T_k$ is the unique increasing filtration so that $N_k(M_j) \subseteq M_{j-2}$ for all $j$, and so that the induced map
\[
N_k^\ell : \Gr^M_{k+\ell}H^i(X_1;\mathbb{Q}) \longrightarrow \Gr^M_{k-\ell}H^k(X_1;\mathbb{Q})
\]
is an isomorphism for all $k$ and $\ell$.
\end{defn}
There  is a precise way of describing $M_\bullet$ (e.g. \cite[pp. 76]{cox-katz}), but we only need the fact that if $N_k^{\ell + 1} = 0$ then
\begin{equation}\label{eq:monfilt}
M_{k-\ell} H^k(X_1;\mathbb{Q}) = \mathrm{im}(N_k^\ell), \quad M_{k+\ell-1} H^k(X_1;\mathbb{Q}) = \ker(N_k^\ell).
\end{equation} 

\begin{proposition}
Let $k\leq d$. Then $M_{2k-1}H^k(X_1;\mathbb{Q})$ is the orthogonal complement of $M_{2d-2k}H^{2d-k}(X_1;\mathbb{Q})$.
\end{proposition}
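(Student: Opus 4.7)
The plan is to identify both sides with kernels/images of powers of the nilpotent monodromy logarithm, and then to use the fact that the monodromy operator is an adjoint (up to sign) for the Poincar\'e pairing.

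First, I would record the identifications coming from (\ref{eq:monfilt}). Since $k \leq d$, Clemens' bound gives $N_k^{k+1} = 0$, so taking $\ell = k$ in (\ref{eq:monfilt}) yields
\[
M_{2k-1} H^k(X_1;\mathbb{Q}) = \ker\!\bigl(N_k^k : H^k(X_1;\mathbb{Q}) \to H^k(X_1;\mathbb{Q})\bigr).
\]
Dually, since $2d - k \geq d$, Clemens' bound gives $N_{2d-k}^{k+1} = N_{2d-k}^{2d-(2d-k)+1} = 0$, and so taking $\ell = k$ in (\ref{eq:monfilt}) (applied to degree $2d-k$) yields
\[
M_{2d-2k} H^{2d-k}(X_1;\mathbb{Q}) = \mathrm{im}\!\bigl(N_{2d-k}^k : H^{2d-k}(X_1;\mathbb{Q}) \to H^{2d-k}(X_1;\mathbb{Q})\bigr).
\]

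Next I would establish the compatibility of $N$ with Poincar\'e duality. The monodromy $T$ is induced by a diffeomorphism of $X_1$, so it preserves the cup product pairing $\langle \cdot, \cdot \rangle : H^k(X_1;\mathbb{Q}) \otimes H^{2d-k}(X_1;\mathbb{Q}) \to \mathbb{Q}$. Taking logarithms, for $\alpha \in H^k$ and $\beta \in H^{2d-k}$,
\[
\langle N_k \alpha, \beta \rangle + \langle \alpha, N_{2d-k} \beta \rangle = 0,
\]
that is, $N_k$ and $-N_{2d-k}$ are adjoint. Iterating, $N_k^k$ and $(-1)^k N_{2d-k}^k$ are adjoint operators with respect to the Poincar\'e pairing.

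From here, the conclusion is pure linear algebra: for a perfect pairing between two finite-dimensional vector spaces and a pair of adjoint linear maps on either side, the kernel of one is precisely the orthogonal complement of the image of the other. Applying this to the adjoint pair $(N_k^k, \pm N_{2d-k}^k)$ with respect to Poincar\'e duality, we obtain
\[
M_{2k-1} H^k(X_1;\mathbb{Q}) = \ker(N_k^k) = \mathrm{im}(N_{2d-k}^k)^{\perp} = M_{2d-2k} H^{2d-k}(X_1;\mathbb{Q})^{\perp},
\]
which is the desired claim. The only nontrivial step is the skew-adjointness of $N$ for the Poincar\'e pairing; everything else is a bookkeeping exercise using (\ref{eq:monfilt}) and Clemens' bound on the order of nilpotency of $N_k$.
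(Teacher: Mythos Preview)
Your proof is correct and follows essentially the same approach as the paper: both identify the relevant pieces of the monodromy weight filtration via (\ref{eq:monfilt}) and Clemens' nilpotency bounds, derive the (skew-)adjointness of $N_k$ and $N_{2d-k}$ from monodromy invariance of the Poincar\'e pairing, and then conclude by the linear-algebra fact that $\ker(A) = (\mathrm{im}(A^*))^\perp$ for a perfect pairing. Your presentation is in fact a bit cleaner: you directly identify $M_{2k-1}H^k = \ker(N_k^k)$ and $M_{2d-2k}H^{2d-k} = \mathrm{im}(N_{2d-k}^k)$ and invoke the kernel--image duality once, whereas the paper argues both orthogonality inclusions separately.
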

\begin{proof}
First, we note that $N_k^{k+1} = 0$ and $N_{2d-k}^{k+1} = 0$, so by (\ref{eq:monfilt}), we have that 
\[
M_{2k-1}H^{k}(X_1;\mathbb{Q}) = \mathrm{im}(N_{k}^k), \quad M_{2d-2k-1}H^{2d-k}(X_1;\mathbb{Q}) = \ker(N_{2d-k}^k).
\]
Monodromy preserves the intersection pairing, so that $\langle T_k(\eta), T_{2d-k}(\zeta)\rangle = \langle \eta, \zeta \rangle$ for any $\eta \in H^k(X_1;\mathbb{Q})$ and $\zeta \in H^{2d-k}(X_1;\mathbb{Q})$. Therefore,
\[
\langle N_k\eta, \zeta \rangle \pm \langle \eta, N_{2d-k}\zeta\rangle = 0
\]
and thus, for any $\ell$, we also have
\begin{equation}\label{eq:pairingpm}
\langle N_k^\ell\eta, \zeta \rangle  = \pm \langle \eta, N^\ell_{2d-k}\zeta\rangle.
\end{equation}
By the nondegeneracy of the pairing $\langle \bullet, \bullet \rangle$, 
\[
\langle \eta, N^k_{2d-k}\zeta \rangle = 0 
\]
for all $\eta \in H^{k}(X_1;\mathbb{Q})$ if and only if $N_{2d-k}^k\zeta = 0$, in other words, if and only if $\zeta \in M_{2d-2k-1}H^{2d-k}(X_1;\mathbb{Q})$. By (\ref{eq:pairingpm}), this is true if and only if
\[
\langle N_k^k\eta, \zeta \rangle = 0
\]
for all $\eta$. In other words, if and only if $\zeta$ is in the orthogonal complement of $\mathrm{im}(N_k^k) = M_{2k-1}H^i(X_1;\mathbb{Q})$. Therefore, $\zeta \in M_{2d-2k}H^{2d-k}(X_1;\mathbb{Q})$ if and only if it is in the orthogonal complement of $M_{2k-1}H^k(X_1;\mathbb{Q})$. 

A nearly identical argument shows that $\eta \in M_{2k-1}H^{k}(X_1;\mathbb{Q})$ if and only if it is in the orthogonal complement of $M_{2d-2k}H^{2d-k}(X_1;\mathbb{Q})$. This completes the proof.
\end{proof}
Duality then identifies $H_k(X_1;\mathbb{Q})$ with $H^{2d-k}(X_1;\mathbb{Q})$ along with their monodromy actions. Thus we have the following result.
\begin{corollary}
Let $S_k$ denote the logarithm of the monodromy on $H_k(X_1;\mathbb{Q})$. Then, under the natural pairing, $M_{2k-1}H^k(X_1;\mathbb{Q})$ is the orthogonal complement of the image of $S_k^k$.
\end{corollary}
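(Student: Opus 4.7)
The plan is to deduce the corollary directly from the previous proposition by transporting its orthogonality statement through Poincaré duality. Since $X_1$ is a smooth compact oriented manifold of real dimension $2d$, capping with the fundamental class $[X_1]$ furnishes a canonical isomorphism $PD : H^{2d-k}(X_1;\mathbb{Q}) \xrightarrow{\sim} H_k(X_1;\mathbb{Q})$. By the definition of the intersection pairing as cup product followed by evaluation against $[X_1]$, the map $PD$ identifies the intersection pairing $H^k(X_1;\mathbb{Q}) \otimes H^{2d-k}(X_1;\mathbb{Q}) \to \mathbb{Q}$ with the natural evaluation pairing $H^k(X_1;\mathbb{Q}) \otimes H_k(X_1;\mathbb{Q}) \to \mathbb{Q}$.

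Next, I would check that $PD$ is equivariant with respect to the monodromy actions, i.e.\ that $\tilde{T}_k \circ PD = PD \circ T_{2d-k}$, where $\tilde{T}_k$ denotes the geometric monodromy on $H_k(X_1;\mathbb{Q})$. This is essentially immediate: the monodromy is induced by an orientation-preserving diffeomorphism $\phi : X_1 \to X_1$ obtained by parallel transport over a small loop around $0 \in \Delta$, so $\phi_*[X_1] = [X_1]$, and cap product with $[X_1]$ then commutes with the action of $\phi$. Passing to logarithms gives $S_k \circ PD = PD \circ N_{2d-k}$, and hence $PD(\mathrm{im}(N_{2d-k}^k)) = \mathrm{im}(S_k^k)$.

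Finally, I would invoke the previous proposition, which asserts that $M_{2k-1}H^k(X_1;\mathbb{Q})$ is the orthogonal complement of $M_{2d-2k}H^{2d-k}(X_1;\mathbb{Q})$ under the intersection pairing. By (\ref{eq:monfilt}) applied with $\ell = k$, using the vanishing $N_{2d-k}^{k+1} = 0$ that holds for $k \leq d$, one has $M_{2d-2k}H^{2d-k}(X_1;\mathbb{Q}) = \mathrm{im}(N_{2d-k}^k)$. Transporting along $PD$ via the compatibility established in the previous step converts this into the statement that $M_{2k-1}H^k(X_1;\mathbb{Q})$ is the orthogonal complement of $\mathrm{im}(S_k^k)$ under the natural pairing, which is precisely the corollary.

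No step here should pose a serious obstacle; the only verification that requires any care is the compatibility of Poincaré duality with monodromy, which reduces to the observation that the geometric monodromy preserves orientation and hence the fundamental class. Everything else is a formal consequence of the previous proposition.
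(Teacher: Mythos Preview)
Your proposal is correct and is precisely the argument the paper has in mind: the paper's entire justification is the one-line remark that Poincar\'e duality identifies $H_k(X_1;\mathbb{Q})$ with $H^{2d-k}(X_1;\mathbb{Q})$ together with their monodromy actions, and you have simply unpacked this by verifying that $PD$ intertwines $N_{2d-k}$ with $S_k$ and carries the intersection pairing to the evaluation pairing. The identification $M_{2d-2k}H^{2d-k}(X_1;\mathbb{Q}) = \mathrm{im}(N_{2d-k}^k)$ via (\ref{eq:monfilt}) is exactly what is needed to finish.
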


Our goal is to compute $M_{2k-1}H^k(X_1;\mathbb{Q})$ for a semistable degeneration. Thus we must compute the image of $S_k^k$ for all $k \leq d$. This is now a straightforward task, thanks to classical results of Clemens \cite{clem}.

The main tool involved in this computation is the {\em Clemens contraction map}. Let $(\mathscr{X},\pi)$ be a normal crossings compactification. Then Clemens constructs a deformation retract from $\mathscr{X}$ to $X_0$. The induced map $X_1 = \pi^{-1}(1) \rightarrow X_0$ is called the {\em Clemens contraction map}, and will be denoted by $r$. If $X_0$ is the union of divisors $A_1,\dots, A_n$, and for each $I \subset \{1,\dots, n\}$ we define $A_I = \cap_{i \in I}$ and ${A}^{i} = \cup_{|I| = i} A_I$. We will let $\widetilde{A}^i$ be the normalization of $A^i$. The Clemens contraction map has the property that the preimage of any point in ${A}^{i} \setminus {A}^{i+1}$ under $r$ is a torus of dimension $i -1$. As before, let us use the notation $\T_{I,p}$ to denote a torus in $X_1$ which is the preimage of a point $p$ in $A_I \setminus (A^{|I|+1} \cap A_I)$. In local coordinates, this can be described explicitly. If $p \in A^i \setminus A^{i+1}$, then we may choose local coordinates $z_1,\dots, z_d$ of $\mathscr{X}$ centered at $p$ so that $\pi = z_1\cdots z_i$ in this neighbourhood. Then
\begin{equation}\label{eq:degentori}
\T_{I,p} = \left\{ (\exp(\mathtt{i}\theta_1),\dots, \exp(\mathtt{i}\theta_i),0,\dots,0) : \theta_i \in S^1, \sum_{j=1}^i \theta_j = 0\right\}.
\end{equation}
The homotopy class of $\T_{I,p}$ in $X_1$ does not depend on the point $p$ in $A_I \setminus (A^{|I|+1} \cap A_I)$ or the local coordinates that we chose. We let $\T_I$ denote a member of this homotopy class. The next result is analogous to Proposition \ref{prop:elzne}.
\begin{theorem}[Clemens, \cite{clem}]
For all $k \leq d$, the image of $S_k^k$ in $H_k(X_1;\mathbb{Q})$ is spanned by the tori $\{\T_I : |I| = k+1\}$.
\end{theorem}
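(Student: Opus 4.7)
The plan is to localize the computation of $S_k^k$ via the Clemens contraction $r \colon X_1 \to X_0$ and to show, by an explicit Picard--Lefschetz type calculation, that $k$ applications of $S_k$ drive cycles onto the deepest strata of $X_0$, where the only nontrivial fiber classes are the tori $\T_I$ with $|I|=k+1$. The first step is to set up the local model near a point $p \in A^i \setminus A^{i+1}$: choosing coordinates $z_1, \dots, z_d$ so that $\pi = z_1 \cdots z_i$, one may realize the monodromy around $t = 0$ by the diffeomorphism sending $z_j \mapsto e^{2\pi \mathtt{i}/i} z_j$ for $j \le i$ and fixing the transverse coordinates. The fiber torus $\T_{I,p}$ over $p$ is preserved set-theoretically, and the induced action on it is a translation; hence $T_k = \mathrm{id}$ on $H_*(\T_{I,p})$, so each class $[\T_I]$ a priori lies in $\ker S_k$. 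The real content of the statement is that, for $|I|=k+1$, such classes in fact lie in $\mathrm{im}(S_k^k)$ and together span this image.

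The second step is to equip $H_*(X_1;\mathbb{Q})$ with a depth filtration $F_\bullet$ coming from the stratification of $X_0$, where $F_p H_k(X_1;\mathbb{Q})$ consists of classes representable by cycles supported in $r^{-1}(A^{p+1})$. Using the Leray spectral sequence for $r$, the associated graded is computed in terms of the homology of the torus bundles over connected components of $\widetilde{A}^{p+1}$. The key local computation --- essentially a multi-variable Picard--Lefschetz calculation on the local model $\{z_1 \cdots z_i = t\}$, or equivalently the Wang sequence for the punctured-disc restriction of $\pi$ --- shows that $S_k$ raises the depth filtration by one and identifies the induced map on graded pieces with the geometric operation that intersects a torus class with one additional component of $X_0$, yielding a fiber torus over a deeper stratum.

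The third step is then largely formal: iterating $k$ times shows $\mathrm{im}(S_k^k) \subseteq F_k H_k(X_1;\mathbb{Q})$, which by the local analysis is spanned by fiber classes over strata $A_I$ with $|I| = k+1$, i.e.\ the tori $\T_I$ of the theorem. The reverse containment, that every such $[\T_I]$ actually lies in $\mathrm{im}(S_k^k)$, follows from the defining $\mathfrak{sl}_2$-property of the monodromy weight filtration (Definition \ref{defn:mwf}), which forces $N_k^k$ to be an isomorphism between the top and bottom graded pieces of $M_\bullet H^k$ and hence forces $\mathrm{im}(S_k^k)$ to have the full expected dimension. The main obstacle is step two: correctly matching the geometric depth filtration on cycles with the monodromy weight filtration, and carrying out the Picard--Lefschetz bookkeeping cleanly in the snc setting with $(k+1)$-fold intersections. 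This is precisely the content of Clemens's topological analysis in \cite{clem}, which I would invoke rather than reprove from scratch.
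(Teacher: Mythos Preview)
Your overall strategy---filter $H_*(X_1)$ by depth over the stratification of $X_0$ and track how monodromy pushes cycles down the strata---is very much in the spirit of Clemens' argument, and the paper's proof follows the same geometric picture. But there is a genuine gap in your reverse containment.

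You argue that each $[\T_I]$ lies in $\mathrm{im}(S_k^k)$ because the $\mathfrak{sl}_2$-property of the monodromy weight filtration ``forces $\mathrm{im}(S_k^k)$ to have the full expected dimension.'' This is circular. The defining property of $M_\bullet$ says $N_k^k \colon \Gr^M_{2k} \to \Gr^M_0$ is an isomorphism, and since $N_k^{k+1}=0$ one has $M_0 = \mathrm{im}(N_k^k)$ tautologically; but nothing in the $\mathfrak{sl}_2$ formalism computes either dimension. It does not tell you that $\dim \mathrm{im}(S_k^k)$ equals the dimension of the span of the $[\T_I]$---that equality is precisely the theorem. The paper (following Clemens) instead produces an explicit preimage: for each $I$ with $|I|=k+1$ there is a $k$-simplex $\sigma_I$ near $\T_I$ such that $(T_k - \mathrm{id})^k(\sigma_I)$ is a nonzero multiple of $[\T_I]$. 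That constructive step is what your sketch is missing.

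There is also a weaker spot in the forward direction. You assert that $F_k H_k(X_1;\mathbb{Q})$ ``is spanned by fiber classes over strata $A_I$ with $|I|=k+1$,'' i.e.\ by the $\T_I$. But $r^{-1}(A^{k+1})$ is (generically) a $k$-torus bundle over a base of dimension $d-k$, so its $k$-th homology carries many classes beyond the fiber; you would need to show either that only fiber classes survive in $H_k(X_1)$, or---more to the point---that $S_k^k$ lands specifically in the fiber classes, not merely in $F_k$. Clemens handles this by passing to a skeletal filtration $Y_q$ of $X_1$ (built from cellular skeleta of the strata $C(I)$), showing $H_q(X_1)$ is a subquotient of the relative group $H_q(Y_{q+1},Y_q)$, and then decomposing relative cycles there so that $(T_q - \mathrm{id})^q$ visibly outputs combinations of the $\T_I$. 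Your depth filtration is morally the same object, but the identification of the output as a torus class is exactly where the work lies; ``invoke Clemens'' is the honest answer, just be aware that what you are invoking supplies \emph{both} directions of the containment, not only the Picard--Lefschetz bookkeeping.
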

\begin{proof}[Sketch of proof]

For the sake of consistency with \cite{clem}, we use the index $q$ throughout the proof instead of $k$. Let us first remark that $S_q^q = (\log T_q)^q = (T_q - \mathrm{id})^q$. Therefore, it is enough to compute the image of $(T_q-\mathrm{id})^q$, which is what Clemens' work allows us to do.

For each torus $\T_I$, Clemens constructs a simplex $\sigma_I$ near $\T_I$ so that $(T_q-\mathrm{id})^q(\sigma_I)$ is a multiple of $\T_I$ (\cite[Formulae 3.3]{clem}), which implies that for each $I$ so that $|I| = q+1$, the torus $\T_I$ is in the image of $(T_q - \mathrm{id})^q$.

We now must check that the tori $\T_I$ span the image of $(T_q -\mathrm{id})^q$. We follow the beautiful (if somewhat arcane) proof of \cite[Theorem 4.4]{clem} closely, adapting the notation therein. The idea is very explicit. We choose a $q$-cycle $\alpha$ in $X_1$, then lift $\alpha$ to a homology group where it is homologous to a linear combination of cycles which look like cycles like $\sigma_I$ or things which are in the kernel of $(T_q - \mathrm{id})^q$. Close analysis of the cycles involved will give us our result. We note that \cite[Theorem 4.4]{clem} can be cited directly to prove our result in the case where $q = d$. The argument given below shows that in the special case of $(T_q-\mathrm{id})^q$, \cite[Theorem 4.4]{clem} can be strengthened.

We will let $C(I)$ be the subset of $A_I$ made up of points sufficiently far from $A_I \cap (\cup_{|I'|= |I|+ 1} A_{I'})$. Then we choose a cellular decomposition of all subsets $C(I)$ whose $p$-skeleton (which we denote $C(I)_p$) satisfies the assumptions of \cite[Lemma 4.6(2)]{clem}. Essentially, this means that the cellular decomposition is such that there is a local analytic chart containing $\sigma_I$ in which the function $\pi$ may be written as $z_1\dots z_{|I|} = 0$. We let\footnote{The reader is cautioned that this $Y_q$ is distinct from the objects $Y_I$ appearing in Section \ref{sect:elzn}. This notation is used to allow our work to be compared to \cite{clem} easily.}
\[
Y_q = \bigcup_{\substack{I \in \{1,\dots, k\} \\ p + |I| = q}} r^{-1}(C(I)_p)
\]
Then \cite[Lemma 4.7]{clem} says that the map $H_{q}(Y_{q+1};\mathbb{Q}) \rightarrow H_q(X_1;\mathbb{Q})$ is surjective, and \cite[Lemma 4.8]{clem} says that $H_{q}(Y_{q+1};\mathbb{Q}) \rightarrow H_{q}(Y_{q+1}, Y_{q};\mathbb{Q})$ is injective. Thus $H_{q}(X_1;\mathbb{Q})$ is a subquotient of $H_{q}(Y_{q+1},Y_{q};\mathbb{Q})$, so if we can compute the image of the map $(T_q - \mathrm{id})^q$ for any class in $H_{q}(Y_{q+1},Y_{q};\mathbb{Q})$, we will obtain our result. The choice of skeleton of $C(I)$ that we have made allows us to decompose $H_{q}(Y_{q+1},Y_{q};\mathbb{Q})$ in a nice way (c.f. the argument in \cite[pp. 103]{clem}), in particular, it is homologous in $H_q(Y_{q+1},Y_q;\mathbb{Q})$ to a sum of cycles $a \omega + \omega'$ for some constant $a$ where $(T_q - \mathrm{id})^q(\omega')$ is homologous to 0 in $H_q(Y_{q+1},Y_q;\mathbb{Q})$ (hence, also homologous to 0 in $H_q(X_0;\mathbb{Q})$), and $\omega$ is homologous to $\sigma_I$. Therefore $(T_q-\mathrm{id})^q\omega$ is homologous to a constant times the class of the torus $\T_I$. 
\end{proof}
\begin{remark}
The proof of \cite[Theorem 4.4]{clem} computes the image of $(T_{p+q} - \mathrm{id})^q$, in which case, the class $\omega$ mentioned in the proof above is expressed as a product $\sigma_I$ and a relative $p$-homology class. Then the image of $(T_{p+q} -\mathrm{id})^q$ is again a relative homology class. Our assumption that $p=0$ implies that the image of $(T_q-\mathrm{id})^q$ can be expressed as an absolute homology class, since any $p$-homology class is a point.
\end{remark}

Therefore, we may adapt the proof of Theorem \ref{thm:prof} in this case.
\begin{defn}
Let $(\mathscr{X},\pi)$ be a semistable degeneration so that $X_0$ has components $A_1,\dots, A_n$. Assume that $I \subseteq \{1,\dots, n\}$ is a maximal subset so that $A_I$ is nonempty. Then we say that $\T_I$ is a {\em profound torus}. We let $\mathrm{Prf}(\mathscr{X},\pi)$ be a set consisting of one profound torus $\T_I$ corresponding to a point in each maximal stratum $A_I$.
\end{defn}
\begin{theorem}
For each $k$,
\[
M_{-2k}H_k(X_1;\mathbb{Q}) = \mathrm{im}\left( \bigoplus_{\T_J \in \mathrm{Prf}(\mathscr{X},\pi)} H_k(\T_J;\mathbb{Q}) \longrightarrow H_k(X_1;\mathbb{Q})\right)
\]
or dually,
\[
M_{2k-1}H^k(X_1;\mathbb{Q}) = \mathrm{ker}\left( H^k(X_1;\mathbb{Q}) \longrightarrow \bigoplus_{\T_J \in \mathrm{Prf}(\mathscr{X},\pi)} H^k(\T_J;\mathbb{Q})  \right).
\]
\end{theorem}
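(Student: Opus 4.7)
The plan is to mirror the proof of Theorem \ref{thm:prof} step for step, with the preceding Clemens-based theorem playing the role of Proposition \ref{prop:elzne} and a degeneration analog of Proposition \ref{prop:restr} providing the comparison between $\T_I$ and subtori of profound tori. The preceding theorem says that $M_{-2k}H_k(X_1;\mathbb{Q})$ is spanned by the classes $[\T_I]$ for $I$ with $|I| = k+1$ and $A_I \neq \emptyset$, so once we show that each such $\T_I$ is homotopic in $X_1$ to a subtorus of some profound $\T_L$ and that the pushforward from a profound $\T_L$ lands in the span of such classes, the theorem follows.

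The key lemma I would establish first is the degeneration analog of Proposition \ref{prop:restr}: for $I \subseteq J$ with $A_J \neq \emptyset$, the torus $\T_I$ is homotopic in $X_1$ to a subtorus of $\T_J$ of real dimension $|I| - 1$, and as $I$ ranges over subsets of $J$ of fixed cardinality $k+1$, the resulting subtori span $H_k(\T_J;\mathbb{Q})$. I would verify this by working in local analytic coordinates $z_1,\ldots,z_d$ near a point of $A_J$ in which $\pi = z_1 \cdots z_{|J|}$; a nearby basepoint of $A_I$ may be chosen in the same chart with $z_k = 0$ for $k \in I$ and $z_k = \zeta_k$ nonzero for $k \in J \setminus I$, and by (\ref{eq:degentori}) the torus $\T_I$ over this basepoint is the locus in the nearby fiber where $z_k$ traces a circle for $k \in I$ with angles constrained to sum to a constant and the $\zeta_k$ held fixed. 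Deforming the $\zeta_k$ radially onto the standard circle realizes $\T_I$ as homotopic to the subtorus of $\T_J$ obtained by fixing the angles $\theta_k$ for $k \in J\setminus I$. The generation statement is then an elementary exterior-algebra computation on $\T_J$, viewed as a torus of dimension $|J|-1$ with coordinates $\theta_1,\ldots,\theta_{|J|}$ modulo the single relation $\sum \theta_k = 0$: the fundamental classes of the $(k+1)$-subset subtori pair nontrivially with a spanning set of $(|J|-1-k)$-forms $d\theta_{j_1}\wedge\cdots\wedge d\theta_{j_{|J|-1-k}}$ taken modulo $\sum d\theta_k = 0$.

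Granted the lemma, the containment $\subseteq$ follows because each nonempty $A_I$ with $|I| = k+1$ sits inside some maximal stratum $A_L$, so $[\T_I]$ lifts through the pushforward from $\T_L$; conversely, the image of $H_k(\T_L;\mathbb{Q})$ is spanned by classes of subtori homotopic to $\T_I$ for $I \subseteq L$ with $|I| = k+1$, each of which lies in $M_{-2k}H_k(X_1;\mathbb{Q})$ by the Clemens-based theorem. The cohomological formulation is obtained by dualizing under the Poincar\'e pairing on $X_1$. The main technical obstacle I expect is arranging the homotopy in the fiber $X_1$ cleanly: unlike the log Calabi--Yau setting, the Clemens tori carry the constraint $\sum \theta_k = 0$ inherited from the defining equation of the degeneration, so the naive ``set some angles to constants'' recipe produces an affine rather than a subgroup subtorus. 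One must track carefully how the basepoint in $A_I$ drifts as the $\zeta_k$ for $k \in J \setminus I$ are deformed, and appeal to functoriality of the Clemens contraction for the inclusion $A_I \hookrightarrow A_J$ to conclude that the resulting subtorus of $\T_J$ and the original $\T_I$ represent the same homotopy class in $X_1$.
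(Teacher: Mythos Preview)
Your proposal is correct and follows precisely the route the paper intends: the paper's entire proof is the single sentence ``Follow the proof of Theorem \ref{thm:prof},'' and you have carried this out. In fact you are more careful than the paper, since you explicitly formulate and sketch the degeneration analogue of Proposition \ref{prop:restr} (which the paper tacitly assumes), correctly noting the extra constraint $\sum \theta_j = 0$ on the Clemens tori and the resulting affine-subtorus subtlety; your exterior-algebra verification that the $(k+1)$-subset subtori span $H_k(\T_J;\mathbb{Q})$ is also sound.
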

\begin{proof}
Follow the proof of Theorem \ref{thm:prof}.
\end{proof}


\subsection{Specialization to the Calabi--Yau case}\label{sect:cycase}
Now we may specialize the results in the previous section to the case where $(\mathscr{X},\pi)$ is a Calabi--Yau degeneration.
\begin{defn}
We say that a semistable degeneration $(\mathscr{X},\pi)$ is {\em Calabi--Yau} if the canonical bundle $K_{\mathscr{X}}$ is trivial.
\end{defn}
We would like to show that all profound tori in a Calabi--Yau degeneration are homotopic to one another
\begin{lemma}\label{lemma:deftori}
Suppose that $p_1 \in A_{J_1}\setminus (A_{J_2} \cap A^{|J_2|+1})$ and $A_{J_2} \setminus (A_{J_1} \cap A^{|J_1|+1})$ with $|J_1| = |J_2| = j$, and that there is a rational curve $C$ in $A^{j-1}$ so that $C$ intersects $A^j$ transversally and that $C \cap A^j$ is precisely the pair of points $p_1$ and $p_2$. Then the tori $\T_{J_1}$ and $\T_{J_2}$ are homotopic to one another. 
\end{lemma}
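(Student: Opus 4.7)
The plan is to adapt the argument of Lemma \ref{lemma:deftori1} to the degenerating setting, with the Clemens contraction $r : X_1 \to X_0$ playing the role of the real oriented blow-up $\Pi : Z \to X$. As before, the rational curve $C$ will serve as the bridge between $\T_{J_1}$ and $\T_{J_2}$. The new difficulty is that $r$ is not equidimensional over $C$: its fibers are $(j-2)$-tori over generic points of $C$ but jump to the $(j-1)$-tori $\T_{J_1}, \T_{J_2}$ over $p_1$ and $p_2$, so $r^{-1}(C) \cap X_1$ is not a torus bundle and a naive fiber-transport is unavailable.

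The workaround is to compare $\T_{J_1}$ and $\T_{J_2}$ with common $(j-1)$-tori arising as preimages under $r$ of loops in $C \setminus \{p_1, p_2\}$. Concretely, I would work in local analytic coordinates $w_1, \dots, w_d$ on $\mathscr{X}$ centered at $p_1$ with $\pi = w_1 \cdots w_j$ and with $C$ given locally by $w_2 = \cdots = w_d = 0$, parameterized by $w_1$. For a small $\epsilon > 0$, consider the loop $\gamma_1 = \{|w_1| = \epsilon\}$ in $C$; unwinding Clemens' construction should identify $r^{-1}(\gamma_1) \cap X_1$ as an explicit $(j-1)$-torus $\mathcal{T}_{1,\epsilon}$ cut out by $|w_1| = \epsilon$, $|w_2| = 1/\epsilon$, $|w_k| = 1$ for $3 \le k \le j$, $w_k = 0$ for $k > j$, and $w_1 \cdots w_j = 1$. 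A radial rescaling of $(w_1, w_2)$ preserving their product (and so keeping us inside $X_1$) then deforms $\mathcal{T}_{1,\epsilon}$ into $\T_{J_1}$; an identical construction near $p_2$ yields $\mathcal{T}_{2,\epsilon} \simeq \T_{J_2}$. Finally, since $C \setminus \{p_1, p_2\} \cong \mathbb{C}^*$ is connected and $r$ restricts there to a genuine $(j-2)$-torus fibration, the loops $\gamma_1$ and $\gamma_2$ are freely homotopic in this punctured sphere and their preimages are homotopic in $X_1$. Concatenating the three homotopies gives $\T_{J_1} \simeq \T_{J_2}$.

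The main obstacle will be the local description of $r^{-1}(\gamma_1) \cap X_1$ in the first step. Formula (\ref{eq:degentori}) describes the Clemens fiber over a single point of $X_0$, and one must carefully track how these fibers fit together as the base point moves from the deepest stratum through the codimension-$(j-1)$ stratum containing $C$. This amounts to recording the changes of coordinates in Clemens' construction and verifying that they assemble to give the submanifold described above; it is essentially mechanical once set up. After that, the radial rescaling homotopy and the standard homotopy of loops on $\mathbb{P}^1$ close the argument routinely.
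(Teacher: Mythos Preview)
Your strategy is sound but differs from the paper's. The paper does not work directly with the Clemens retraction; instead it invokes Kawamata--Namikawa \cite[Lemma~4.1]{kn} to identify $r$ with the restriction of the real oriented blow-up $\Pi : \mathrm{Blo}_{X_0}(\mathscr{X}) \to \mathscr{X}$ to a single fibre of $\pi_0 : \mathrm{Blo}_{X_0}(\mathscr{X}) \to \mathrm{Blo}_0(\Delta)$ over a fixed point of the boundary circle. This reduces the problem verbatim to the snc argument of Lemma~\ref{lemma:deftori1}: one has $\Pi^{-1}(C) \cong (S^1)^j \times [0,1]$, the map $\pi_0$ is projection onto one $S^1$ factor, and its fibre is $(S^1)^{j-1}\times[0,1]$ with $\T_{J_1}$ and $\T_{J_2}$ sitting at the two ends. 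No explicit tracking of Clemens' retraction is needed.

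Your route---bridging the two tori via $r$-preimages of loops in $C\setminus\{p_1,p_2\}$---unwinds to the same picture once translated into blow-up coordinates, so it will work. Two cautions about the execution, though. First, your explicit description of $\mathcal{T}_{1,\epsilon}$ singles out $w_2$ with $|w_2|=1/\epsilon$; the Clemens retraction has no reason to break the symmetry among $w_2,\dots,w_j$, and in any event $|w_2|=1/\epsilon$ pushes you outside any polydisc chart centred at $p_1$, so neither the formula nor the ``radial rescaling preserving $w_1w_2$'' homotopy is literally well-posed as written. Second, the step you flag as the main obstacle---recording how the fibres of $r$ assemble over $\gamma_1$---is exactly what the Kawamata--Namikawa identification packages for free; carrying it out by hand will more or less rederive the real oriented blow-up model. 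The paper's approach buys you that bookkeeping at the cost of one citation.
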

\begin{proof}
This proof roughly follows the proof of Lemma \ref{lemma:deftori}. The connection to real oriented blow up in this case is not clear from the exposition above, but is made clear in work of Kawamata and Namikawa \cite[Proof of Lemma 4.1]{kn}. Kawamata and Namikawa show that the Clemens contraction map may be understood in terms of the real oriented blow up of $\mathscr{X}$ in $X_0$, which we denote $\mathrm{Blo}_{X_0}(\mathscr{X})$. The map $\pi : \mathscr{X} \rightarrow \Delta$ induces a map $\pi_0: \mathrm{Blo}_{X_0}(\mathscr{X}) \rightarrow \mathrm{Blo}_0(\Delta)$, where $\mathrm{Blo}_0(\Delta)$ is the real oriented blow up of $\Delta$ at $0$. Analytically, $\mathrm{Blo}_0(\Delta)$ is simply $\Delta \setminus D$ where $D$ is a small open disc containing 0, and the map $\pi_0$ is equivalent to $\pi$ restricted to $\pi^{-1}(\Delta \setminus D)$. The fiber $X'$ in $\mathrm{Blo}_{X_0}(\mathscr{X})$ over a point in the boundary of $\Delta \setminus D$ is a closed subset of $\partial \mathrm{Blo}_{X_0}(\mathscr{X})$, and the real oriented blow up map $\Pi : \mathrm{Blo}_{X_0}(\mathscr{X}) \rightarrow \mathscr{X}$ induces a map $X' \rightarrow X_0$ which is, topologically, the Clemens contraction map.

Let $p_1$ and $p_2$ be as above and let $(S^1)^i$ denote a real $i$-torus. Then the preimage of $C$ in $\mathrm{Blo}_{X_0}(\mathscr{X})$ is as in Lemma \ref{lemma:deftori}: the preimage of $C$ under $b$ is diffeomorphic to $(S^1)^{j} \times [0,1]$ and the preimages of of $p_1$ and $p_2$ can be identified with the tori $(S^1)^{j} \times \{0\}$ and $(S^1)^{j} \times \{1\}$ of dimension $j$ which we may denote $\T_{J_1}'$ and $\T_{J_2}'$ respectively. The map $\Pi^{-1}(C)$ to $S^1$ projects onto one of the $S^1$ factors of $(S^1)^j \times [0,1]$, hence the fiber over a fixed point in $S^1$ is diffeomorphic to $(S^1)^{j-1} \times [0,1]$. Then $\T_{J_1}$ and $\T_{J_2}$ may be identified with $(S^1)^{j-1} \times \{0\}$ and $(S^1)^{j-1} \times \{1\}$ respectively in $\Pi^{-1}(C)$. Therefore they are homotopic to one another.
\end{proof}
We may now specialize \cite[Theorem 10]{kollar} to the situation at hand to understand the geometry of the central fiber of a Calabi--Yau degeneration. This will allow us to relate the profound tori of a Calabi--Yau degeneration to one another.
\begin{theorem}[{Koll\'ar \cite[Theorem 10]{kollar}}]\label{lemma:kollar}
Suppose that $(\mathscr{X},\pi)$ is a Calabi-Yau degeneration and that $\pi$ is projective. Let $J_1$ and $J_2$ be subsets of $\{1, \dots, n\}$ so that $A_{J_1}$ and $A_{J_2}$ are minimal. Then $|J_1| = |J_2| = j$ for some $j$, and there are points $p_1 \in A_{J_1}$ and $p_2 \in A_{J_2}$ which are connected by a chain of rational curves $C_1, \dots, C_m$ contained in in $A^{j-1}$, each having the property that $C_i$ intersects $A^j$ transversally in a distinct pair of points.
\end{theorem}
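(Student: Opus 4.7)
The plan is to reduce this statement directly to the log Calabi--Yau version already invoked as Theorem \ref{lemma:kollar1}. Our setup is an snc pair $(\mathscr{X}, X_0)$ in which $\mathscr{X}$ is a family over the disc rather than a projective variety in its own right; the main observation is that $(\mathscr{X}, X_0)$ behaves like a log Calabi--Yau pair along $X_0$, so Kollár's theorem still applies and the rational curves it produces necessarily live in $X_0$.

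First I would verify the log Calabi--Yau condition. Since $(\mathscr{X}, \pi)$ is a Calabi--Yau semistable degeneration, $K_{\mathscr{X}}$ is trivial and $X_0 \sim \pi^{-1}(0)$ is the pullback of a point in $\Delta$, hence is numerically trivial on fibers. Thus $K_{\mathscr{X}} + X_0 \equiv 0$ on $X_0$ and, more generally, on every fiber of $\pi$. The snc hypothesis guarantees that $(\mathscr{X}, X_0)$ is dlt, and the log canonical centers of this pair are precisely the strata $A_I$ of $X_0$; the minimal log canonical centers are exactly the minimal strata of $X_0$ in the sense of the theorem.

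Once this is established, the result follows by the same application of \cite[Theorem 10]{kollar} that was used to prove Theorem \ref{lemma:kollar1}. Kollár's theorem asserts that all minimal log canonical centers of a dlt log Calabi--Yau pair have the same dimension (so $|J_1| = |J_2| = j$) and that any two such centers can be joined by a chain of rational curves contained in log canonical centers of codimension one smaller (i.e., components of $A^{j-1}$), each of which meets the next-deeper stratum $A^j$ transversally at precisely two distinct points. Translating back into the stratification language produces the required chain $C_1, \dots, C_m$ in $A^{j-1}$ connecting points $p_1 \in A_{J_1}$ and $p_2 \in A_{J_2}$.

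The principal obstacle is reconciling Kollár's theorem, typically phrased for a projective dlt log Calabi--Yau pair, with our relative analytic setup over a disc. This is precisely why the projectivity of $\pi$ is imposed: after possibly shrinking $\Delta$, one can replace the analytic disc by a quasi-projective curve and globalize $\mathscr{X}$ to a projective family of the sort to which the bend-and-break arguments underlying Kollár's theorem apply directly. The resulting rational curves are forced to lie in $X_0$ because this is where the log canonical locus of $(\mathscr{X}, X_0)$ is concentrated, and their incidence with the strata of $X_0$ is precisely as described.
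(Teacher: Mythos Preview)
The paper does not give a proof of this statement; it is simply quoted as the specialization of \cite[Theorem 10]{kollar} to the setting of a Calabi--Yau semistable degeneration, just as Theorem \ref{lemma:kollar1} is the specialization to the absolute log Calabi--Yau case. Your proposal correctly identifies this, and the details you supply---that $(\mathscr{X},X_0)$ is a dlt pair whose log canonical centers are the strata $A_I$, that $K_{\mathscr{X}}+X_0$ is relatively trivial over the base because $K_{\mathscr{X}}\sim 0$ and $X_0=\pi^*(0)$, and that projectivity of $\pi$ is what allows Koll\'ar's argument to run---are exactly what one needs to see that Koll\'ar's relative theorem applies here. In short, your approach coincides with the paper's: both are invocations of \cite[Theorem 10]{kollar}, and you have simply made the translation explicit.
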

As a consequence of Lemma \ref{lemma:deftori} and Theorem \ref{lemma:kollar}, all profound tori in a projective Calabi--Yau degeneration are homotopic to one another. This allows us to conclude the proof of Theorem \ref{thm:mainintro} by following the arguments in Section \ref{sect:consolidation}.
\begin{theorem}\label{thm:maincy}
Let $(\mathscr{X},\pi)$ be a Calabi--Yau degeneration and that $\pi$ is projective. Then if $J$ is a minimal stratum in $X_0$, the torus $\T_J$ of dimension $\delta = |J| - 1$ has the property that for each $k$,
\[
M_{2k-1} H^k(X_1;\mathbb{Q}) = \mathrm{ker}(H^k(X_1;\mathbb{Q}) \longrightarrow H^k(\T_J;\mathbb{Q})).
\]
\end{theorem}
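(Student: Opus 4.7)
The plan is to mirror, essentially line-for-line, the argument given for Theorem \ref{thm:main} in the log Calabi--Yau setting, substituting the degeneration-side tools developed in Section \ref{sect:lmhs} for their snc counterparts. The starting point is the theorem just proved (the analogue of Theorem \ref{thm:prof} for degenerations), which identifies
\[
M_{2k-1}H^k(X_1;\mathbb{Q}) = \mathrm{ker}\!\left( H^k(X_1;\mathbb{Q}) \longrightarrow \bigoplus_{\T_I \in \mathrm{Prf}(\mathscr{X},\pi)} H^k(\T_I;\mathbb{Q}) \right).
\]
So the whole content of Theorem \ref{thm:maincy} is the reduction of this direct sum over all profound tori to a single profound torus $\T_J$.

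First I would invoke Theorem \ref{lemma:kollar} (Koll\'ar): since $(\mathscr{X},\pi)$ is a projective Calabi--Yau degeneration, any two minimal strata $A_{J_1}$ and $A_{J_2}$ have the same cardinality $j$, and there exist points $p_1 \in A_{J_1}$, $p_2 \in A_{J_2}$ connected by a chain of rational curves $C_1,\dots,C_m \subset A^{j-1}$, each $C_i$ meeting $A^j$ transversally in exactly two points of consecutive strata in the chain. Then I would apply Lemma \ref{lemma:deftori} inductively along this chain: each rational curve $C_i$ produces a homotopy in $X_1$ between the torus associated to the left endpoint of $C_i$ and the torus associated to the right endpoint. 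Composing these homotopies yields that $\T_{J_1}$ and $\T_{J_2}$ are homotopic in $X_1$. Thus every torus in $\mathrm{Prf}(\mathscr{X},\pi)$ is homotopic to the fixed torus $\T_J$.

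Once every profound torus is homotopic to $\T_J$, the pullback map $H^k(X_1;\mathbb{Q}) \to H^k(\T_I;\mathbb{Q})$ factors (up to the isomorphism induced by the homotopy) through $H^k(X_1;\mathbb{Q}) \to H^k(\T_J;\mathbb{Q})$ for every $\T_I \in \mathrm{Prf}(\mathscr{X},\pi)$. Consequently the kernel of the restriction to the direct sum coincides with the kernel of the restriction to the single summand $H^k(\T_J;\mathbb{Q})$, and the desired identity follows. The statement about the real dimension of $\T_J$ being $\delta = |J|-1$ is immediate from the explicit local description (\ref{eq:degentori}): the torus in the Clemens fiber over a point of the deepest stratum is cut out in $(S^1)^{|J|}$ by the constraint $\sum \theta_j = 0$.

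The only step that requires any thought is verifying that the inductive application of Lemma \ref{lemma:deftori} along the chain $C_1,\dots,C_m$ is actually legitimate. The lemma as stated applies to a single rational curve connecting two minimal strata of the same dimension, but in the Koll\'ar chain the intermediate intersection points $C_i \cap C_{i+1}$ lie in strata that need not themselves be minimal --- only that the neighboring strata have dimension $j$. One must check that the Clemens tori over these intermediate points are of the correct dimension and that the homotopies glue; this is straightforward from the real-oriented-blow-up picture used by Kawamata--Namikawa and invoked in the proof of Lemma \ref{lemma:deftori}, since each $C_i$ contributes its own cylinder $(S^1)^{j} \times [0,1]$ and adjacent cylinders share a boundary torus. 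This is the only mildly subtle bookkeeping in an otherwise formal argument.
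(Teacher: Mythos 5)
Your proof is correct and takes essentially the same route as the paper: start from the degeneration analogue of Theorem \ref{thm:prof}, then use Koll\'ar's $\mathbb{P}^1$-connectedness of minimal strata (Theorem \ref{lemma:kollar}) together with Lemma \ref{lemma:deftori}, applied iteratively along the chain of rational curves, to show all profound tori are homotopic, which collapses the direct sum to a single summand. One small remark on the bookkeeping point you flag at the end: in fact there is nothing to worry about, since once Koll\'ar's theorem guarantees all minimal strata have codimension $j$ one automatically has $A^{j+1}=\emptyset$, so every point of $A^j$ that the chain passes through lies in a minimal stratum and Lemma \ref{lemma:deftori} applies verbatim at each step; transitivity of homotopy then finishes the induction without needing to glue cylinders explicitly.
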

\begin{remark}
Just as in the case of log Calabi--Yau pairs, if we begin with a semistable degeneration $(\mathscr{X},\pi)$ so that for any minimal strata $A_{J_1}$ and $A_{J_2}$, $|J_1| = |J_2| = j$ for some $j$, and there are points $p_1 \in A_{J_1}$ and $p_2 \in A_{J_2}$ which are connected by a chain of rational curves $C_1, \dots, C_m$ in $A^{j-1}$, each having the property that $C_i$ intersects $A^j$ transversally in a distinct pair of points, the conclusions of Theorem \ref{thm:maincy} continue to hold.
\end{remark}

\section{P=W type results for surfaces}\label{sect:cons}

In this section, we will show that there are P=W type identities which appear for log Calabi--Yau pairs $(X,Y)$ where $X$ is a rational surface, and $Y$ is a reduced, simple normal crossings anticanonical divisor in $X$, and when $S$ is a K3 surface.


\subsection{P=W for maximal log Calabi--Yau pairs in dimension 2}\label{sect:cypairs}
Let $(X,Y)$ be a pair consisting of a rational surface $X$ and $Y$ a reduced nodal anticanonical divisor. The component pieces of the following result are well known but we will explain it for the sake of completeness.
\begin{theorem}\label{thm:symhk}
For some symplectic form on $X$, there is a Lagrangian 2-torus fibration on $X \setminus Y$ whose generic fiber is homotopic to a profound torus and whose only singular fibers are nodal 2-tori.
\end{theorem}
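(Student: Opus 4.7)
The plan is to build the Lagrangian fibration in two stages. First I would use the structure theory of Gross--Hacking--Keel \cite{ghk} to produce a \emph{toric model} for $(X,Y)$: since $Y$ is a nodal anticanonical cycle on a rational surface, there exists a smooth projective toric surface $\bar{X}$ with toric boundary $\bar{D}$ together with a birational map $X \dashrightarrow \bar{X}$ that, after contracting finitely many interior $(-1)$-curves on $X$, becomes a composition of blowups at smooth points of $\bar{D}$, in such a way that $Y$ is the strict transform of $\bar{D}$. The toric moment map $\bar{\mu}: \bar{X} \to \bar{\Delta}$ is then a singular Lagrangian $T^2$-fibration for the toric K\"ahler form, smooth over $\bar{\Delta}^\circ$ and collapsing circle factors over $\partial \bar{\Delta}$, with $\bar{\mu}^{-1}(\partial \bar{\Delta}) = \bar{D}$.

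Next I would realize the non-toric blowups symplectically via Symington's almost toric construction \cite{sym}. Each interior blowup at a smooth point of $\bar{D}$ can be replaced by a \emph{nodal trade}: one cuts off a small triangular neighborhood of an edge of $\bar{\Delta}$ near the blown-up point and glues in an integral affine structure carrying a single nodal torus fiber in the interior. Iterating over all non-toric centers produces a symplectic form $\omega$ on $X$ together with an almost toric Lagrangian fibration $\mu: X \to \Delta$, where $\Delta$ is an integral affine 2-manifold with boundary obtained from $\bar{\Delta}$ by finitely many nodal trades, and whose only singular fibers are nodal 2-tori lying over isolated interior points. By construction the nodes can be kept strictly inside $\Delta$, so $\mu^{-1}(\partial \Delta)$ agrees with the image of $\bar{D}$, which is $Y$.

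Restricting $\mu$ to $X \setminus Y$ then gives a Lagrangian 2-torus fibration $g : X \setminus Y \to \Delta^\circ$ whose only degenerate fibers are nodal 2-tori, establishing all parts of the statement except the identification of the generic fiber. For that I would pick a regular value near an edge of $\Delta$ close to a vertex (which corresponds to a node $p$ of $Y$), work in local toric coordinates $(z_1, z_2)$ centered at $p$ with $Y = Z(z_1 z_2)$, and observe that the fiber of $g$ in this neighborhood is the product of two small circles $\{(r_1 e^{{\tt i}\theta_1}, r_2 e^{{\tt i}\theta_2})\}$, which is precisely the profound torus described after Theorem \ref{thm:mainintro}(1).

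The main obstacle is the symplectic geometry of Step 2: one must verify that the nodal trades corresponding to all the non-toric blowups can be performed simultaneously with all nodes remaining in the interior of $\Delta$, and that no two trades interfere with each other when several blowups occur along the same component of $\bar{D}$. Each individual nodal trade is a compactly supported modification of the toric model, so this is essentially a local combinatorial check in Symington's framework, and I expect it to go through cleanly once the toric model is in place.
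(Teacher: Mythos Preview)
Your proposal is correct and follows essentially the same route as the paper: reduce to a toric model via \cite[Proposition 1.3]{ghk}, then use Symington's almost toric blowups to handle the non-toric centers, and finally read off the profound torus in local toric coordinates near a node of $Y$. Two small terminological points: the GHK result says one must first perform finitely many \emph{corner} blowups on $(X,Y)$ (which leave $X\setminus Y$ unchanged) before a toric model exists, rather than contract interior $(-1)$-curves; and the operation you describe is what Symington calls an \emph{almost toric blowup} (\cite[\S 5.4]{sym}) rather than a nodal trade, though your geometric description of cutting a triangle and inserting a node is accurate. Your closing concern about simultaneous non-interfering modifications is legitimate but is handled by the locality of Symington's construction, exactly as you anticipate.
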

\begin{proof}
This is essentially a consequence of results of Symington \cite{sym} and Gross, Hacking and Keel \cite{ghk}. Let $(X,Y)$ be a pair consisting of a smooth rational surface $X$ and a nodal anticanonical divisor $Y$. Then according to \cite[Proposition 1.3]{ghk}, $X\setminus Y$ can be constructed from a toric variety.

To any fan $\Sigma$ in $\mathbb{R}^2$ there is a toric variety $X_\Sigma$. If $\Sigma$ is chosen so that each cone is spanned by rays in $\mathbb{Z}^2$ which generate $\mathbb{Z}^2$ then $X_\Sigma$ is smooth. Furthermore, there is an anticanonical divisor of $X_\Sigma$ which is a union of copies of $\mathbb{P}^1$ which meet transversally. These rational curves are in bijection with the rays of $\Sigma$. Let $Y_\Sigma$ denote this anticanonical divisor. Then $(X_\Sigma, Y_\Sigma)$ form a log Calabi--Yau pair which we call a {\em toric pair}. Let $(X,Y)$ is a log Calabi--Yau pair and $\dim X = 2$. If we blow up $X$ in a smooth point $p$ of $Y$, then the proper transform of $Y$ in $\mathrm{Bl}_p(X)$ is an anticanonical divisor (which is biregular to $Y$), and $(\mathrm{Bl}_p X, Y)$ is also a log Calabi--Yau pair. Gross, Hacking, and Keel \cite[Proposition 1.3]{ghk} show that if $(X,Y)$ is a log Calabi--Yau pair so that $X$ is a rational surface and $Y$ admits at least one singular point, then there is some toric pair $(X_\Sigma,Y_\Sigma)$ which can be blown up repeatedly as above to produce a log Calabi--Yau pair $(X',Y')$ and $X' \setminus Y'$ is biregular to $X\setminus Y$. Therefore, if we can produce Lagrangian torus fibrations on log Calabi--Yau pairs obtained by blowing up toric pairs, then we will obtain a Lagrangian torus fibration on $X \setminus Y$. This is precisely what Symington does.

If $X$ is a toric surface, then there is a Hamiltonian $(S^1)^2$ action on $X$, which leads to a moment map $\mu : X \rightarrow \mathbb{R}^2$ whose image is a the moment polytope $\Delta$ of $X$. The fibers over points on the interior of $\Delta$ are Lagrangian 2-tori, and fibers over points in the interiors of faces in $\Delta$ are copies of $S^1$, and fibers over vertices are simply points. From this explicit description one sees that the fibers of $\mu$ are profound tori. Furthermore, the toric boundary of $X$ maps to the boundary of $\Delta$, smooth points of the boundary of $X$ mapping to points on the interior of a face of $\Delta$. Therefore, our claim is true for toric surfaces.

This is called {\em toric fibration} on $X$. A surface which admits such a fibration, extending to the anticanonical boundary in this way, but in addition may admit nodal fibers, is called an {\em almost toric fibration}. In \cite[Section 5.4]{sym}, Symington explains that if $X$ admits an almost toric fibration, and $X$ is blown up in a smooth point in its boundary, then the blow up can also be equipped with an almost toric fibration. Blowing up a smooth point in $Y$ is a surgery which leaves neighbourhoods of nodal points in $Y$ unaltered, hence Symington's surgery does not change the class of the profound torus.

Therefore, combining Symington's construction with \cite[Proposition 1.3]{ghk}, we see that for some symplectic structure on $X$, there is a $(X,Y)$ an almost symplectic Lagrangian torus fibration over a polytope $\Delta$ with possibly nodal fibers and whose generic fiber is a profound torus in $X \setminus Y$.
\end{proof}
We will now let $g : X \setminus Y \rightarrow \Delta^\circ$ denote the Lagrangian torus fibration constructed in Theorem \ref{thm:symhk}. Here $\Delta^\circ$ denotes the interior of $\Delta$.
\begin{defn}
An {\em elliptic Lefschetz fibration} over the disc is an oriented $4$-manifold $M$ along with a differentiable map $f$ to the real 2-disc whose smooth fibers are 2-tori, and whose singular fibers are nodal 2-tori, in other words, near each singular point of $f$, there are differentiable complex coordinates $(z_1,z_2)$ in which $f$ may be written as $f = z_1^2 + z_2^2$. Furthermore, we assume that each fiber contains at most one singular point and that the local orientation induced by the complex coordinates agrees with the orientation on $M$.
\end{defn}

The fibrations $g: X\setminus Y \rightarrow \Delta^\circ$ constructed in Theorem \ref{thm:symhk} are examples of elliptic Lefschetz fibrations. An important invariant of an elliptic Lefschetz fibration is its monodromy representation \cite[pp. 291]{gs}, which we now explain. Let $D_\mathrm{sm}$ be the subset of $D$ made up of smooth points. Choose a base point $p$ inear the boundary of $D_\mathrm{sm}$ and a basis of counterclockwise loops $\gamma_1,\dots, \gamma_m$ starting at $p$, going around each point in $D \setminus D_\mathrm{sm}$ and intersecting only at the point $p$. By parallel transport around each loop $\gamma_i$, we obtain a diffeomorphism from $E_p = \pi^{-1}(p)$ to itself, which we call $T_i$. The diffeomorphisms $T_1,\dots, T_m$ determine a representation,
\[
\pi_1(D_\mathrm{sm},p) \longrightarrow \mathrm{Diff}^+(E_p) \longrightarrow \mathrm{Map}(E_p) \cong \mathrm{SL}_2(\mathbb{Z}).
\]
Here, $\mathrm{Diff}^+(E_p)$ is the group of orientation preserving diffeomorphisms of $E_p$ and $\mathrm{Map}(E_p)$ is the mapping class group of $E_p$. Since $g$ has fibers with at worst ordinary double points, each path $\gamma_i$ is associated with a distinguished vanishing cycle, that is, the homology class of the copy of $S^1$ in $E_p$ which contracts to a point if we approach the point in $D \setminus D_\mathrm{sm}$ contained in $\gamma_i$. The diffeomorphisms $T_i$ are Dehn twists around the vanishing cycles of the singular fibers encircled by $\gamma_i$. There is a transitive braid group action on all such collections of loops whose action on $(T_1,\dots, T_m)$ can be described \cite[pp. 297]{gs}. The braid group orbits of the mapping class group images of $T_1,\dots, T_m$ classify $M \rightarrow D$ up to diffeomorphism. Choosing a primitive class $u \in H_1(E_p;\mathbb{Z})$, one may alter $(M,f)$ by attaching a neighbourhood of a nodal fiber whose monodromy diffeomorphism is the positive Dehn twist around $u$, and thus $u$ is the vanishing cycle at the new critical value. If we choose a basis $\alpha, \beta$ of $H_1(E_p;\mathbb{Z})$ in which $u = s \alpha +t  \beta$, the monodromy transformation of this new family is given by the matrix
\[
R_{s,t} = \left( \begin{matrix} 1-st & s^2 \\ -t^2 & 1 +st \end{matrix} \right).
\]
The matrices $R_{s,t}$ are the $\mathrm{SL}_2(\mathbb{Z})$ conjugates of the matrix $R_{1,0}$. Our goal is now to show that $X\setminus Y$ admits a complex structure so that the map $g : X\setminus Y \rightarrow \Delta^\circ$ is a holomorphic map. 
\begin{proposition}\label{prop:complexstr}
Any elliptic Lefschetz fibration $g: M \rightarrow D$ over the disc can be embedded into an elliptic surface $\mathscr{E}$ with elliptic fibration $f: \mathscr{E} \rightarrow \mathbb{P}^1$ so that
\begin{equation}\label{eq:embell}
\begin{tikzcd}
M \ar[r] \ar[d,"g"] & \mathscr{E} \ar[d,"f"] \\
D \ar[r,"i"] & \mathbb{P}^1 
\end{tikzcd}
\end{equation}
commutes for some inclusion $i : D \rightarrow \mathbb{P}^1$.
\end{proposition}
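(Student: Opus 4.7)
The plan is to complete $g$ topologically to an elliptic Lefschetz fibration over $S^2 = \mathbb{P}^1$, and then invoke a classical realization theorem for elliptic surfaces to endow the total space with a complex structure in which the fibration is holomorphic.

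First, one records the boundary monodromy of $g$: since every singular fiber is a nodal torus, the local monodromies around the $m$ critical values are matrices of the form $R_{s_i, t_i}$, and their ordered product
\[
T = R_{s_1, t_1}\cdots R_{s_m, t_m} \in \mathrm{SL}_2(\mathbb{Z})
\]
represents the monodromy around $\partial D$. Next, one extends $g$ across the complementary disc $D' := \mathbb{P}^1\setminus D^\circ$ by constructing an elliptic Lefschetz fibration $g'': M'' \to D'$ whose total boundary monodromy is $T^{-1}$, and then gluing it to $g$ along $\partial D$ via the identity on the reference fiber. This reduces to expressing $T^{-1}$ as a product of matrices of the form $R_{s,t}$, which is possible because the positive Dehn twists of $T^2$ generate the mapping class group $\mathrm{Map}(T^2) \cong \mathrm{SL}_2(\mathbb{Z})$ as a \emph{monoid}: the torus relation $(R_{1,0} R_{0,1})^6 = I$ yields $R_{1,0}^{-1} = R_{0,1}(R_{1,0} R_{0,1})^5$, so every conjugate $R_{s,t}^{-1}$ is itself a positive product of $R$-matrices, and hence so is $T^{-1}$. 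After this extension, one may further stabilize by inserting additional copies of the trivial factorization $(R_{1,0} R_{0,1})^6 = I$ into the monodromy over $D'$, arranging that the combined Lefschetz fibration $\tilde g : \tilde M \to \mathbb{P}^1$ has exactly $12k$ critical values for some positive integer $k$.

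Finally, one appeals to the classification of elliptic Lefschetz fibrations over $S^2$ due to Moishezon and Livne (see e.g.\ \cite{gs}): any such fibration whose singular fibers are all nodal and whose total number of critical values is $12k$ is fiberwise diffeomorphic to the standard holomorphic elliptic surface $E(k) \to \mathbb{P}^1$. Transporting the complex structure back along this fibration-preserving diffeomorphism equips $\tilde M$ with a complex structure $\mathscr{E}$ in which $\tilde g$ is a holomorphic elliptic fibration over $\mathbb{P}^1$, and the original $M$ is an open submanifold of $\mathscr{E}$ sitting over a smooth inclusion $i : D \hookrightarrow \mathbb{P}^1$, as required.

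The principal obstacle is the invocation of the Moishezon--Livne theorem in the final step, which is the deep content of the proposition; the first two steps amount to standard monodromy bookkeeping. A secondary subtlety is to ensure that the gluing in step two can be made smooth at $\partial D$, but this is automatic once the monodromies are matched, since the boundary torus bundle is determined up to bundle isomorphism by its holonomy.
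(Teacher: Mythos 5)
Your proposal follows essentially the same route as the paper: factor the inverse of the total boundary monodromy as a product of positive Dehn twists (you use the torus relation $(R_{1,0}R_{0,1})^6 = I$ where the paper writes down an explicit factorization $R_{0,1}R_{3,1}R_{6,1}R_{1,0}^8 = R_{1,0}^{-1}$, but this is a cosmetic difference), glue in Lefschetz handles to trivialize the boundary monodromy and close up over $S^2$, and then invoke the Moishezon--Livne classification to obtain a complex structure. The argument is correct and matches the paper's; the one small point worth noting is that your explicit stabilization to reach $12k$ critical fibers is unnecessary, since any elliptic Lefschetz fibration over $S^2$ with only nodal fibers automatically has a multiple of $12$ of them.
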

\begin{proof}
Our goal is to show that $M$ can be embedded in an elliptic Lefschetz fibration $f:\mathscr{E} \rightarrow S^2$ as in (\ref{eq:embell}). Then we may use a result of Livne and Moishezon to conclude that $\mathscr{E}$ is in fact an algebraic surface and $g$ is an elliptic fibration.

We let $T_\infty = T_1\dots T_k$. We will show that we may find matrices $S_1,\dots, S_b$ which are conjugates of $M_{1,0}$ so that $T_\infty S_1 \dots S_b = \mathrm{id}_{2\times 2}$. Therefore we can attach handles to $M$ according to $S_1,\dots, S_b$ so that the resulting $4$-manifold $M'$ still admits an elliptic Lefschetz fibration, whose monodromy around the boundary of the disc is trivial. Therefore we may extend our fibration to a fibration over $S^2$. 

We would like to show that $T_\infty^{-1}$ can be factored as a product of positive Dehn twists. To show this, it is enough to show that each $T_i^{-1}$ can be written as a product of conjugates of matrices of the form $R_{s,t}$. We may write $T_i = L_i R_{1,0} L_i^{-1}$ for some matrix $L_i$ in $\mathrm{SL}_2(\mathbb{Z})$, therefore $T^{-1}_i = L_i R_{1,0}^{-1} L_i^{-1}$, so it is sufficient to write $R_{1,0}^{-1}$ as a product of conjugates of $R_{1,0}$, but this can be done without much difficulty. For instance, 
\[
R_{0,1}R_{3,1}R_{6,1}R_{1,0}^8 = \left(\begin{matrix}1 & 0 \\ -1 & 0 \end{matrix}\right) \left( \begin{matrix} -2 & 9 \\ -1 & 4 \end{matrix} \right) \left(\begin{matrix} -5 & 36 \\ -1 & 7 \end{matrix} \right) \left( \begin{matrix} 1 & 1 \\ 0 & 1 \end{matrix} \right)^8 = \left( \begin{matrix} 1 & -1 \\ 0 & 1 \end{matrix} \right).
\]
To each of the elements $S_i$ of $\mathrm{Map}(E_p)$, we may construct an elliptic Lefschetz fibration $M_i$ over the disc $D_i$ with fibration map $g_i : M_i \rightarrow D_i$ whose geometric monodromy is $S_i$ and whose smooth fiber over a point in the boundary of $D_i$ is $E_p$. Attaching the manifolds $M_i$ to $M$ along a neighbourhood of $E_p$, we construct a differentiable manifold $M'$ with an elliptic Lefschtez fibration $g'$ over the disc, which contains $M$ as a submanifold in such a way that $g'|_M = g$. The monodromy of $g' : M' \rightarrow D$ around the boundary of $D$ given by $\mathrm{id}_{2\times 2}$. A Lefschetz fibration over $S^1$ is determined up to diffeomorphism by its monodromy automorphism in $\mathrm{Map}(E_p)$. Therefore, the monodromy automorphism moving counterclockwise around the boundary of $D$ agrees with the monodromy automorphism of $E_p \times D$ moving counterclockwise around the boundary of $D$.

Therefore we may glue $E_p \times D$ to $M'$ in such a way that $g$ extends to an elliptic Lefschetz fibration $f: \mathscr{E} \rightarrow S^2$. A theorem of Livne and Moishezon (published as an appendix in \cite[Appendix II]{moish}) says that elliptic Lefschetz fibrations over the sphere are uniquely determined up to diffeomorphism by the number of singular fibers, which must be a multiple of 12. Since for any $n \geq 0$ , there is an algebraic surface fibered over $\mathbb{P}^1$ with $12n$ nodal fibers, it follows that $M'$ embeds as a subset of an algebraic surface, as does $M$.
\end{proof}

Now we would like to compute the perverse Leray filtration of an elliptic Lefschetz fibration over the complex disc. The following proposition is helpful.

\begin{proposition}\label{prop:perverse}
Let $V$ be a K\"ahler surface and assume that $g: V \rightarrow D$ is a proper map to any Riemann surface $D$. Then the perverse Leray filtration on cohomology with respect to $g$ has the property that 
\[
P_{k-1}H^k(V;\mathbb{Q}) = \mathrm{ker}(H^k(V;\mathbb{Q}) \rightarrow H^k(F;\mathbb{Q}))
\]
for $F$ a smooth fiber of $g$.
\end{proposition}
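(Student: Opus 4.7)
The plan is to apply the decomposition theorem (in its K\"ahler form, through Saito's theory of mixed Hodge modules) and argue that restriction to a smooth fiber annihilates every perverse Leray piece except the top one.

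Since $g$ is a proper K\"ahler morphism of relative dimension one, Saito's theory provides a splitting
\[
Rg_*\mathbb{Q}_V[2] \;\cong\; \bigoplus_{i \in \{-1,0,1\}} Q_i[-i], \qquad Q_i := {}^p H^i(Rg_*\mathbb{Q}_V[2]),
\]
with each $Q_i$ a semisimple perverse sheaf on $D$. Semisimplicity implies that each $Q_i$ is the direct sum of the intermediate extension of a local system on the regular locus $D^\circ \subset D$ and a sum of skyscraper sheaves supported at the critical values of $g$; moreover $Q_i|_{D^\circ}$ is a local system placed in standard cohomological degree $-1$. Taking hypercohomology yields
\[
H^k(V;\mathbb{Q}) \;=\; \bigoplus_i H^{k-2-i}(D, Q_i),
\]
and by definition of the perverse Leray filtration, $P_{k-1}H^k(V;\mathbb{Q})$ is the sum of the summands with $i < k-1$ (the ``top'' perverse degree $i = k-1$ is omitted).

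Let $y \in D^\circ$ and $F = g^{-1}(y)$. By proper base change, the restriction map $H^k(V;\mathbb{Q}) \to H^k(F;\mathbb{Q})$ respects the decomposition. A degree count gives $H^{k-2-i}(\{y\}, i^*Q_i) = 0$ unless $i = k-1$, so only the summand of perverse degree $i = k-1$ contributes nontrivially to $H^k(F;\mathbb{Q})$. On this summand, the skyscraper part of $Q_{k-1}$ contributes zero to $H^{-1}(D, Q_{k-1})$ since a skyscraper sheaf sits in standard degree $0$; the IC part is the intermediate extension of a local system $\mathcal{L}$ on $D^\circ$, whose global sections inject into the stalk $\mathcal{L}|_y$ via evaluation (a flat section of a local system is determined by its value at a single point). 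Therefore the restriction is injective on the $i = k-1$ summand and vanishes on the others, yielding $\ker(H^k(V;\mathbb{Q}) \to H^k(F;\mathbb{Q})) = P_{k-1}H^k(V;\mathbb{Q})$.

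The main subtlety is the bookkeeping required to align the ``perverse-degree'' decomposition above with the paper's indexing for $P_\bullet$; in particular one must verify that the summand annihilated by restriction is indeed the one labelled by the top perverse degree in the convention being used. A cleaner, more conceptual alternative is to invoke the hyperplane characterization of the perverse Leray filtration due to de Cataldo and Migliorini \cite{dcm}: for a one-dimensional base, a general ``codimension one linear subspace'' is a single generic point, whose preimage is a smooth fiber, and the statement of the proposition follows directly. Although \cite{dcm} is stated in the algebraic setting, its derivation is formal given the decomposition theorem and transfers to the K\"ahler category without change.
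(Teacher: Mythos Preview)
Your proposal is correct and follows essentially the same approach as the paper: both invoke Saito's decomposition theorem for the proper K\"ahler map $g$, split $Rg_*\mathbb{Q}_V$ into perverse summands (the paper writes these out explicitly as $\mathrm{IC}_D(R^ig_*\mathbb{Q}_V)$ plus a skyscraper, while you keep them as abstract $Q_i$), and then observe that restriction to a smooth fiber kills all but the top summand and is injective there because $H^0$ of an intermediate extension of a local system injects into the stalk. Your closing remark about the de~Cataldo--Migliorini hyperplane characterization is a nice alternative that the paper does not mention.
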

\begin{proof}
According to Saito \cite{saito}, the decomposition theorem remains true in this context. Following \cite[Example 1.8.4]{dcm2}, we then have the following expression for $R\pi_*\mathbb{Q}_{V}$,
\begin{equation}\label{eq:decompthm}
Rg_*\mathbb{Q}_{V}[2] \cong \mathrm{IC}_D(R^2g_*\mathbb{Q}_{V}) \oplus \mathrm{IC}_D(R^1g_*\mathbb{Q}_{V}) \oplus T_\Sigma \oplus \mathrm{IC}_D(R^0g_*\mathbb{Q}_D)
\end{equation}
where $T_\Sigma$ is a skyscraper sheaf supported on the critical values of $g$ and whose rank is given by $\rho_p -1$ where $\rho_p$ denotes the number of irreducible components of $g^{-1}(p)$. In this case, the intersection cohomology sheaves can be identified with $j_*j^*R^ig_*\mathbb{Q}[i]$ where $j : D_\mathrm{sm} \hookrightarrow D$ is the embedding. Therefore,
\[
\mathrm{IC}_D(R^2g_*\mathbb{Q}_V) \cong \mathbb{Q}_D[2], \, \, \mathrm{IC}_D(R^1g_*\mathbb{Q}_V) \cong j_*j^*R^1g_*\mathbb{Q}_V[1], \, \, \mathrm{IC}_D(R^0g_*\mathbb{Q}_V) \cong \mathbb{Q}_D.
\]
We may then compute the first piece of the perverse Leray filtration to be
\[
P_{k-1}H^k(V;\mathbb{Q}) = \ker( H^k(V;\mathbb{Q}) \longrightarrow H^0(D,\mathrm{IC}_D(R^kg_*\mathbb{Q}_V))).
\]
By the argument above, $H^0(D,\mathrm{IC}_D(R^ig_*\mathbb{Q}_V))$ is precisely the monodromy fixed part of the cohomology of a smooth fiber, and $H^i(V;\mathbb{Q}) \rightarrow H^0(D,\mathrm{IC}_D(R^ig_*\mathbb{Q}_V))$ is identified with the pullback in cohomology to any smooth fiber. Therefore
\[
P_{k-1}H^k(V;\mathbb{Q}) = \ker( H^k(V;\mathbb{Q}) \longrightarrow H^k(F;\mathbb{Q})).
\]
\end{proof}
Finally, we prove the main theorem of this section.
\begin{theorem}\label{thm:pw}
Let $g: X \setminus Y \rightarrow \Delta^\circ$ be the Lagrangian torus fibration described in the proof of Theorem \ref{thm:symhk}, let $P_\bullet$ be the perverse Leray filtration on $H^*(X\setminus Y;\mathbb{Q})$ associated to $g$, and let $W_\bullet$ be the weight filtration on $H^*(X\setminus Y;\mathbb{Q})$. Then
\[
P_mH^k(X\setminus Y;\mathbb{Q}) = W_{2m}H^k(X\setminus Y;\mathbb{Q}) = W_{2m+1}H^k(X\setminus Y;\mathbb{Q}).
\]
for all $k, m$.
\end{theorem}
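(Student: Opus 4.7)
The plan is to combine three ingredients: Proposition \ref{prop:perverse}, Theorem \ref{thm:main}, and Theorem \ref{thm:symhk} together with Proposition \ref{prop:complexstr}. I would equip $X\setminus Y$ with the complex structure from Proposition \ref{prop:complexstr}, which realizes $g$ as the restriction to the preimage of a disc of a proper holomorphic elliptic fibration $\mathscr{E}\to\mathbb{P}^1$ on an algebraic, hence K\"ahler, elliptic surface. In this complex structure $X\setminus Y$ is K\"ahler and $g:X\setminus Y\to\Delta^\circ$ is proper holomorphic with smooth fibers elliptic curves, so Proposition \ref{prop:perverse} applies and gives
\[
P_{k-1}\HH^k(X\setminus Y;\mathbb{Q}) = \mathrm{ker}\bigl(\HH^k(X\setminus Y;\mathbb{Q})\to \HH^k(F;\mathbb{Q})\bigr)
\]
for a smooth fiber $F$ of $g$. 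By Theorem \ref{thm:symhk}, $F$ is homotopic inside $X\setminus Y$ to a profound torus $\T$ of $(X,Y)$, so the restrictions $\HH^k\to \HH^k(F)$ and $\HH^k\to \HH^k(\T)$ have the same kernel. Theorem \ref{thm:main} identifies this kernel with $W_{2k-1}\HH^k(X\setminus Y;\mathbb{Q})$, yielding the core identity $P_{k-1}\HH^k = W_{2k-1}\HH^k$ for every $k$.

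The next step is to show that the mixed Hodge structure on $\HH^*(X\setminus Y;\mathbb{Q})$ is Hodge--Tate, so that $W_{2m}=W_{2m+1}$ for all $m,k$. By \cite[Proposition 1.3]{ghk} we may replace $(X,Y)$ by a biregularly equivalent pair obtained from a toric log Calabi--Yau pair by iterated blow-ups at smooth points of the boundary. In this model every irreducible component of $Y$ is a rational curve, so $\widetilde Y^1$ is a disjoint union of copies of $\mathbb{P}^1$ and $\widetilde Y^2$ is a finite set of points. Deligne's weight spectral sequence then writes each graded piece $\mathrm{Gr}^W_\ell \HH^k(X\setminus Y;\mathbb{Q})$ as a subquotient of $\bigoplus_j \HH^{k-2j}(\widetilde Y^j;\mathbb{Q})(-j)$, which is Hodge--Tate, so $\mathrm{Gr}^W_{2m+1}=0$ for all $m,k$.

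To close I would verify $P_m=W_{2m}$ by exhaustive case analysis. A direct decomposition-theorem computation of $Rg_*\mathbb{Q}_{X\setminus Y}$, using that $\Delta^\circ$ is a contractible open disc, shows that $\HH^k(X\setminus Y;\mathbb{Q})$ vanishes for $k\geq 3$ and that $P_{-1}\HH^k=0$, $P_m\HH^k=\HH^k$ for $m\geq k$, and in particular $P_0\HH^2=0$. Combined with the fact that the weights of $\HH^k$ on a smooth variety lie in $[k,2k]$, the only non-trivial equalities left are
\[
P_0\HH^1 = W_0\HH^1 = W_1\HH^1 = 0,\qquad P_1\HH^2 = W_2\HH^2 = W_3\HH^2;
\]
the first follows from the core identity at $k=1$ together with $W_1\HH^1=0$ (supplied by the Hodge--Tate step, or directly from $\HH^1(X)=0$ for $X$ rational), and the second follows from the core identity at $k=2$ combined with the Hodge--Tate vanishing $\mathrm{Gr}^W_3\HH^2=0$.

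The main obstacle is the opening paragraph. The perverse Leray filtration is defined with respect to the non-algebraic complex structure of Proposition \ref{prop:complexstr}, whereas the weight filtration is intrinsic to the original algebraic structure on $X\setminus Y$, and the two must be compared through purely topological information. That bridge is exactly the homotopy equivalence $F\simeq\T$ supplied by Theorem \ref{thm:symhk}; this is why both the log Calabi--Yau hypothesis (needed so that all profound tori are homotopic in Theorem \ref{thm:main}) and the existence of the Lagrangian torus fibration with $F\simeq\T$ are essential to the argument, while the remaining steps are formal.
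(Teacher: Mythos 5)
Your proposal is correct and follows essentially the same strategy as the paper: equip $X\setminus Y$ with the auxiliary complex structure from Proposition \ref{prop:complexstr} so that Proposition \ref{prop:perverse} computes the bottom piece of $P_\bullet$ as a restriction kernel, invoke Theorem \ref{thm:symhk} to replace the fiber $F$ by a profound torus $\T$, invoke Theorem \ref{thm:main} to identify that kernel with $W_{2k-1}\HH^k$, and then use Deligne's weight spectral sequence (with rational strata) plus the two-step structure of the perverse Leray filtration to dispose of all other indices. The only cosmetic differences are that the paper first writes out both filtrations completely via (\ref{eq:weight}) and (\ref{eq:perverse}) and then compares, whereas you first establish the core identity $P_{k-1}=W_{2k-1}$ and then do the remaining case analysis; and you route the rationality of the boundary strata through \cite[Proposition 1.3]{ghk}, whereas the paper simply observes directly that $Y^0\cong X$ is rational and $Y^1$ is a union of rational curves (which already holds for a nodal anticanonical divisor on a rational surface). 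Neither difference is substantive.
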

\begin{proof}
There is a spectral sequence which allows us to compute the weight filtration on the mixed Hodge structure of $H^i(X\setminus Y;\mathbb{Q})$ (see e.g. \cite[Lemma 3.2.7]{deligne}, \cite[Proposition 8.34]{voisin}). This sequence has $E_1$ term given by
\[
E_1^{p,q} = H^{2p+q}(Y^{-p};\mathbb{Q}).
\] 
The spectral sequence above degenerates at the $E_2$ term to $H^{p+q}(X \setminus Y;\mathbb{Q})$ and $E_2^{p,q} \cong \Gr^W_qH^{p+q}(X\setminus Y;\mathbb{Q})$.

In the situation at hand, $Y^0 \cong X$ is rational,  $Y^1$ is a union of rational curves. As a consequence, we can deduce that $W_1H^1(X \setminus Y;\mathbb{Q}) = W_3H^2(X\setminus Y;\mathbb{Q}) = 0$ and $H^3(X\setminus Y;\mathbb{Q}) \cong H^4(X\setminus Y ;\mathbb{Q}) = 0$. Therefore, each cohomology group $H^i(X\setminus Y;\mathbb{Q})$ has the property that
\[
W_{2j}H^i(X\setminus Y;\mathbb{Q}) = W_{2j+1}H^i(X\setminus Y;\mathbb{Q})
\]
for all $i$ and $j$, and the only possible non-trivial weight-graded pieces in cohomology of $X\setminus Y$ are
\[
\Gr^W_4H^2(X\setminus Y;\mathbb{Q}), \,\Gr_2^WH^2(X\setminus Y;\mathbb{Q}), \, \Gr^W_2H^1(X\setminus Y;\mathbb{Q}), \, \Gr_0H^0(X\setminus Y;\mathbb{Q}).
\]
Therefore \ref{thm:mainintro}(1) determines the weight filtration on the cohomology of $X \setminus Y$ entirely. In other words, we have
\begin{equation}\label{eq:weight}
W_{2k-1}H^k(X\setminus Y ;\mathbb{Q})= W_{2k-2}H^k(X\setminus Y;\mathbb{Q}) = \ker(H^k(X\setminus Y;\mathbb{Q}) \rightarrow H^k(\T ;\mathbb{Q}))
\end{equation}
for $\T$ a profound torus and
\[
W_{2k-3}H^k(X\setminus Y;\mathbb{Q}) = 0, \quad W_{2k}H^k(X \setminus Y;\mathbb{Q}) = H^k(X \setminus Y;\mathbb{Q})
\]
for all $k$.

Now we notice that, using the decomposition theorem as described in the proof of Proposition \ref{prop:perverse} and Proposition \ref{prop:complexstr}, the perverse Leray filtration on $H^i(X\setminus Y;\mathbb{Q})$ with respect to $g$ has at most two steps. In this case, the lowest step is determined by 
\begin{equation}\label{eq:perverse}
P_{i-1}H^i(X\setminus Y;\mathbb{Q}) = \ker( H^i(X\setminus Y;\mathbb{Q}) \rightarrow H^i(F ;\mathbb{Q}))
\end{equation}
where $F$ is a smooth fiber of $g$, and
\[
P_{k-2}H^k(X\setminus Y;\mathbb{Q}) = 0, \quad P_kH^k(X\setminus Y;\mathbb{Q}) = H^k(X\setminus Y;\mathbb{Q}).
\] 
By Theorem \ref{thm:symhk}, $F$ is homotopic to a profound torus, so comparing (\ref{eq:weight}) and (\ref{eq:perverse}) proves the result.
\end{proof}
\begin{remark}
For the proof of Theorem \ref{thm:pw}, we really only require that a fiber $F$ of $g$ be {\em homologous} to $\T$, so in fact our results are stronger than necessary for Theorem \ref{thm:pw}. Szab\'o \cite{szabo1} has recently proven the P=W conjecture for certain surfaces. The fact that the torus $\T$ is homologous to a fiber of the Hitchin map under the nonabelian Hodge correspondence is a part of his proof.
\end{remark}
\begin{remark}
In \cite{zh}, Zhang proves a similar result for a class of log Calabi--Yau surfaces. Zhang constructs a collection of cluster surfaces and proves that they admit fibrations over the disc whose generic fiber is a 2-torus and whose singular fibers are degenerate elliptic curves which have several nodes (fibers of Kodaira type $I_n$). Zhang then proves that these differentiable torus fibrations admit complex structures and that the weight filtration on their cohomology corresponds to the perverse Leray filtration of the torus fibration. It might be possible to use Theorem \ref{thm:mainintro}(1) to simplify his results.
\end{remark}


\subsection{P=W for K3 surfaces}

In this section, we will prove a P=W type result for K3 surfaces. First, we remark that the course we take in this section is slightly different from that of the previous section. Our goal here is to prove, fully, the ``P=W conjecture for K3 surfaces''. This conjecture, and probably the result itself, will admit generalization to any hyperk\"ahler manifold which admits a holomorphic Lagrangian torus fibration.

We begin with an elliptic fibration on a K3 surface with section, $g : S \rightarrow \mathbb{P}^1$. Let $\beta$ denote the cohomology class of a fiber of $g$. The class $\beta$ has the property that $\langle \beta, \beta \rangle  = 0$ where $\langle \bullet, \bullet \rangle$ denotes the intersection pairing on $H^2(S;\mathbb{Q})$. We will use the notation $\Lambda_{\mathrm{K3}}$ to denote the lattice $E_8(-1)^{\oplus 2} \oplus U^{\oplus 3}$. This lattice is isomorphic to $H^2(S;\mathbb{Z})$ equipped with the pairing $\langle \bullet, \bullet \rangle$. Here $E_8(-1)$ is the negative definite lattice associated to $E_8$ root system and $U$ is the lattice is the unique even indefinite lattice of rank 2.

For any $\rho \in H^2(S;\mathbb{Z})$ with the property that $\langle \rho, \beta \rangle  = 0$ and $\langle \rho, \rho \rangle > 0$, we may construct an operator which we denote $N_{\beta,\eta}$ in $\mathrm{End}(H^2(S;\mathbb{Z}))$ by letting
\[
N_{\beta,\rho} : x \mapsto \langle x, \beta \rangle \rho - \langle x , \rho \rangle \beta
\]
(see \cite[Lemma 1.1]{fs}). One may define a filtration $M'_\bullet$ on $H^2(S;\mathbb{Z})$ from $N_{\beta,\rho}$ by letting
\[
M'_0= M'_1 = \mathrm{im}(N^2_{\beta,\rho}) , \qquad  M'_2 = M'_3 = \mathrm{ker}(N^2_{\beta,\rho}), \qquad M'_4 = H^2(S;\mathbb{Z}).
\]

\begin{proposition}\label{prop:weightfilt}
The following statements are true.
\begin{enumerate}
\item The span of $\beta$ is $M'_0$, and $M'_2 = \beta^\perp$ (hence $M'_\bullet$ is independent of $\rho$).
\item The filtration $M'_\bullet$ is the monodromy weight filtration associated to the operator $T_{\eta,\rho} = \exp(N_{\eta,\rho})$ (c.f. Definition \ref{defn:mwf}).
\end{enumerate}
\end{proposition}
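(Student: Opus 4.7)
The plan is to handle both parts of the proposition by direct computation with the operator $N_{\beta,\rho}$, since everything reduces to a one-line identity for $N_{\beta,\rho}^2$. First I would compute, for arbitrary $x \in H^2(S;\mathbb{Z})$,
\[
N_{\beta,\rho}^{2}(x) = \langle N_{\beta,\rho}(x),\beta\rangle \rho - \langle N_{\beta,\rho}(x),\rho\rangle \beta,
\]
expand using the definition of $N_{\beta,\rho}$, and apply the two relations $\langle \beta,\beta\rangle = 0$ and $\langle \rho,\beta\rangle = 0$. Every term involving $\rho$ on the outside cancels, leaving the clean formula
\[
N_{\beta,\rho}^{2}(x) = -\langle \rho,\rho\rangle \,\langle x,\beta\rangle\, \beta.
\]

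With this in hand, part (1) is immediate. Because $\langle \rho,\rho\rangle > 0$ and the pairing on $\Lambda_{\mathrm{K3}}$ is nondegenerate (so there exists $x$ with $\langle x,\beta\rangle \neq 0$), the image of $N_{\beta,\rho}^{2}$ is precisely the span of $\beta$, and its kernel is precisely $\beta^{\perp}$. This identifies $M'_{0}$ with $\mathbb{Z}\beta$ and $M'_{2}$ with $\beta^{\perp}$, both of which are manifestly independent of the auxiliary class $\rho$.

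For part (2), I would simply verify the two axioms characterizing the monodromy weight filtration centered at $k=2$ in Definition \ref{defn:mwf}. The relation $N_{\beta,\rho}(M'_{j}) \subseteq M'_{j-2}$ follows from three short checks: $N_{\beta,\rho}(M'_{4}) \subseteq M'_{2} = \beta^{\perp}$ because $\langle N_{\beta,\rho}(x),\beta\rangle = 0$ for every $x$ (again using $\langle\beta,\beta\rangle = \langle \rho,\beta\rangle = 0$); $N_{\beta,\rho}(M'_{2}) \subseteq M'_{0}$ since for $x \in \beta^{\perp}$ the defining formula collapses to $-\langle x,\rho\rangle\beta$; and $N_{\beta,\rho}(M'_{0}) = 0$ because $N_{\beta,\rho}(\beta) = 0$. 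The isomorphism conditions $N_{\beta,\rho}^{\ell}\colon \Gr^{M'}_{2+\ell} \xrightarrow{\sim} \Gr^{M'}_{2-\ell}$ are trivial for $\ell = 0,1$ since $\Gr^{M'}_{3} = \Gr^{M'}_{1} = 0$, and for $\ell = 2$ the map $\Gr^{M'}_{4} \to \Gr^{M'}_{0}$ is $[x] \mapsto -\langle \rho,\rho\rangle\langle x,\beta\rangle\,[\beta]$, which is an isomorphism of one-dimensional spaces by the computation above.

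There is no serious obstacle here: the whole argument is algebraic and rests on the single identity for $N_{\beta,\rho}^{2}$. The only point to handle carefully is the non-triviality of that identity, which needs $\langle \rho,\rho\rangle \neq 0$ and the existence of some $x$ with $\langle x,\beta\rangle \neq 0$, both guaranteed by the hypotheses on $\rho$ and by nondegeneracy of the K3 intersection form on $\beta$. Once the identity is established, parts (1) and (2) drop out in a few lines each.
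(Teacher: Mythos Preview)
Your proposal is correct and follows essentially the same route as the paper: compute $N_{\beta,\rho}^{2}(x)$ explicitly, read off part (1) from the image and kernel, and then verify the two defining axioms of the monodromy weight filtration for part (2). Your write-up is in fact slightly more careful than the paper's (you check all three inclusions $N(M'_j)\subseteq M'_{j-2}$ and note the $\ell=0,1$ cases explicitly), and your sign in the formula for $N_{\beta,\rho}^{2}(x)$ is the correct one.
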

\begin{proof}
This is a direct check.
\begin{equation}\label{eq:N2}
N_{\beta,\rho}^2(x) = \langle \langle x, \beta \rangle \rho - \langle x , \rho \rangle \beta, \beta \rangle \rho - \langle \langle x, \beta \rangle \rho - \langle x , \rho \rangle \beta , \rho \rangle \beta = \langle x, \beta \rangle \langle \rho ,\rho \rangle \beta
\end{equation}
from which (1) follows easily. Finally we show that this is in fact the monodromy weight filtration associated to $N_{\beta, \rho}$. It is clear that the image of $N_{\beta,\rho}$ is contained in the span of $\rho$ and $\beta$, which is by definition in $W_2$. We note that $N_{\beta,\rho}(\rho) = - \langle \rho ,\rho \rangle \beta$ is in $M'_0$, therefore $N_{\beta,\rho}(M'_2) \subset M'_0$. To see that $N_{\beta, \rho}^2$ induces an isomorphism between $\Gr^{M'}_4$ and $\Gr^{M'}_0$, note that $\Lambda_{\mathrm{K3}}$ is nondegenerate, hence $\rank \beta^\perp = 21$. We then see that $\eta \notin M'_3$, hence $\eta$ spans $\Gr^{M'}_4$. Furthermore, $N_{\beta,\rho}^2(\eta) \neq 0$, hence $N_{\beta,\rho}^2$ induces the required isomorphism. 
\end{proof}
\begin{proposition}\label{prop:pervfilt}
Let $P_\bullet$ denote the perverse Leray filtration associated to $g : S \rightarrow \mathbb{P}^1$. Then $P_i H^2(S;\mathbb{Q}) = M'_{2i}H^2(S;\mathbb{Q})$ for all $i$.
\end{proposition}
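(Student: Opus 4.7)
The filtration $M'_\bullet$ defined in Proposition \ref{prop:weightfilt} has exactly three non-trivial steps: $M'_0 = M'_1 = \mathrm{span}(\beta)$, $M'_2 = M'_3 = \beta^\perp$, and $M'_4 = H^2(S;\mathbb{Q})$. The plan is to show that the perverse Leray filtration $P_\bullet$ for $g : S \to \mathbb{P}^1$ has precisely three non-trivial steps $P_0 \subsetneq P_1 \subsetneq P_2 = H^2(S;\mathbb{Q})$ (with $P_{-1} = 0$ since $g$ has relative dimension one), and to identify $P_0 = \mathrm{span}(\beta)$ and $P_1 = \beta^\perp$. Matching term by term then gives $P_i H^2(S;\mathbb{Q}) = M'_{2i}H^2(S;\mathbb{Q})$ for $i \in \{0,1,2\}$.

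The identification of $P_1$ is immediate from Proposition \ref{prop:perverse}: since $S$ is a K\"ahler surface and $g$ is proper onto the curve $\mathbb{P}^1$,
\[
P_1 H^2(S;\mathbb{Q}) = \ker\bigl(H^2(S;\mathbb{Q}) \longrightarrow H^2(F;\mathbb{Q})\bigr)
\]
for $F$ a smooth fiber. Because $F$ is an elliptic curve with fundamental class $\beta$, Poincar\'e duality on $F$ identifies this restriction map with $x \mapsto \langle x, \beta \rangle$, whence $P_1 H^2(S;\mathbb{Q}) = \beta^\perp = M'_2 H^2(S;\mathbb{Q})$.

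For $P_0$, I would unpack the decomposition used in the proof of Proposition \ref{prop:perverse},
\[
Rg_*\mathbb{Q}_S[2] \cong \mathbb{Q}_{\mathbb{P}^1}[2] \oplus \mathrm{IC}_{\mathbb{P}^1}(R^1 g_*\mathbb{Q}_S) \oplus T_\Sigma \oplus \mathbb{Q}_{\mathbb{P}^1},
\]
in which the summand $\mathbb{Q}_{\mathbb{P}^1}$ sits in the lowest perverse degree, and in which the inclusion of this summand is induced by the adjunction unit $\mathbb{Q}_{\mathbb{P}^1} \to Rg_*\mathbb{Q}_S$. The resulting map on degree two cohomology is the pullback $g^* : H^2(\mathbb{P}^1;\mathbb{Q}) \to H^2(S;\mathbb{Q})$, whose image is spanned by $g^*[\mathbb{P}^1] = [F] = \beta$. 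Since no summand sits in a strictly lower perverse degree, $P_{-1} H^2(S;\mathbb{Q}) = 0$ and $P_0 H^2(S;\mathbb{Q}) = \mathrm{span}(\beta) = M'_0 H^2(S;\mathbb{Q})$.

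Combining with the trivial identity $P_2 H^2(S;\mathbb{Q}) = H^2(S;\mathbb{Q}) = M'_4 H^2(S;\mathbb{Q})$ completes the proof. The main point requiring care is verifying that the skyscraper summand $T_\Sigma$ (coming from reducible singular fibers of $g$) contributes only to the middle graded piece $P_1/P_0$ and not to $P_0$; this is automatic, because $T_\Sigma$ is a perverse sheaf concentrated in the same perverse degree as $\mathrm{IC}_{\mathbb{P}^1}(R^1 g_*\mathbb{Q}_S)$, one step above the bottom summand $\mathbb{Q}_{\mathbb{P}^1}$.
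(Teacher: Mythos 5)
Your proof is correct and follows essentially the same route as the paper: Proposition \ref{prop:perverse} gives $P_1 = \ker(H^2(S) \to H^2(F)) = \beta^\perp$, and the bottom piece $P_0$ is identified as the image of $g^* : H^2(\mathbb{P}^1) \to H^2(S) = \mathrm{span}(\beta)$ via the lowest perverse summand in the decomposition theorem. The paper additionally offers a second argument for $P_0$ using the self-duality of the perverse filtration under the Poincar\'e pairing ($P_0^\perp = P_1$), which you could mention as an alternative; your explicit check that the skyscraper summand $T_\Sigma$ lies in the middle perverse degree is a useful clarification that the paper leaves implicit.
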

\begin{proof}
First, Proposition \ref{prop:perverse} tells us that $P_1H^2(S;\mathbb{Q})$ is the kernel of the restriction to a fiber $F$ of $g$. Since $\beta$ is the class of a fiber, it follows that $\beta^\perp = P_1H^2(S;\mathbb{Q}) = W_2H^2(S;\mathbb{Q})$. On the other hand, the decomposition theorem (see (\ref{eq:decompthm})) tells us that $P_0H^2(S;\mathbb{Q})$ is the image of $H^2(\mathbb{P}^1, R^0g_*\mathbb{Q}_S)$, which is spanned by the class of a fiber, $\beta$. Therefore, $P_0H^2(S;\mathbb{Q}) = M'_0H^2(S;\mathbb{Q})$.

Alternately, the perverse Leray filtration behaves nicely with respect to the Poincar\'e pairing and that, in particular, $P_0H^2(S;\mathbb{Q})^\perp = P_1H^2(S;\mathbb{Q})$ \cite[pp.1115-18 (32)]{williamson}. By (\ref{eq:N2}), $M'_2H^2(S;\mathbb{Q})$ is precisely the orthogonal complement of the span of $\beta$, which is $M'_0H^2(S;\mathbb{Q})$. Since $M'_2H^2(S;\mathbb{Q}) = P_1H^2(S;\mathbb{Q})$ it followws that $P_0H^2(S;\mathbb{Q}) = M'_0H^2(S;\mathbb{Q})$ as well.
\end{proof}

We would now like to identify $M'_\bullet$ with the monodromy weight filtration of degeneration of hyperk\"ahler manifolds. This can be done using the global Torelli theorem for K3 surfaces. A good reference for this material is \cite{dolgachev}. 

The period space of $m$-polarized K3 surfaces is a type IV symmetric domain which can be expressed in the following way. Let us choose a primitive element $\alpha$ of $\Lambda_\mathrm{K3}$ whose square is $2m > 0$. Then the period space of marked K3 surfaces for which $\alpha$ is pseudoample is given by
\[
\mathscr{P}_\alpha = \{ \tau \in \mathbb{P}(\alpha^\perp \otimes \mathbb{C}) : \langle \tau, \tau \rangle = 0 , \langle \tau, \overline{\tau} \rangle > 0 \}.
\]
This domain has two components which we refer to as $\mathscr{P}_\alpha^\pm$. Then we let $\mathrm{O}^+(\Lambda_{\mathrm{K3}},\alpha)$ be the subgroup of $\mathrm{O}(\Lambda_{\mathrm{K3}})$ which fixes $\alpha$ and does not exchange the components $\mathscr{P}_\alpha^\pm$. We define
\[
\mathscr{M}_\alpha = \mathscr{P}^+_\alpha/ \mathrm{O}^+(\Lambda_{\mathrm{K3}}, \alpha)
\]
The domain $\mathscr{M}_\alpha$ admits a compactification called the Baily--Borel compactification, which is denoted $\mathscr{M}_\alpha^\mathrm{BB}$. It is known that $\mathscr{M}_\alpha^\mathrm{BB} \setminus \mathscr{M}_\alpha$ admits a stratification into points (called type III boundary components or cusps) and curves (called type II boundary components) which are in bijection with rank 1 isotropic subspaces of $\alpha^\perp$ up to automorphism by $\mathrm{O}^+(\Lambda_\mathrm{K3},\alpha)$ and rank 2 totally isotropic subspaces of $\alpha^\perp$ up to automorphism by $\mathrm{O}^+(\Lambda_\mathrm{K3},\alpha)$ respectively. Therefore if we choose a primitive class $\beta \in \Lambda_\mathrm{K3}$ so that $\langle \beta, \beta \rangle = 0$ and $\langle \beta, \alpha \rangle = 0$, then $\beta$ determines a cusp $c_\beta$ of $\mathscr{M}_\alpha^\mathrm{BB}$. 

The following result follows from arguments of Soldatenkov, but our statement is slightly different from the those in \cite{sold}. 
\begin{theorem}[{Soldatenkov, \cite[Section 4]{sold}}]\label{thm:construction}
There is a projective family of K3 surfaces $\pi: \mathscr{S}^* \rightarrow \Delta^*$ whose monodromy automorphism on $H^2(S_1;\mathbb{Q})$ has logarithm which is a multiple of $N_{\beta,\rho}$.
\end{theorem}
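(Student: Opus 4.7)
The plan is to realize $\pi$ as a one-parameter family approaching the type III cusp of the Baily--Borel compactification $\mathscr{M}_\alpha^\mathrm{BB}$ determined by $\beta$, for an appropriately chosen polarization class $\alpha$. First I would choose a primitive class $\alpha \in \Lambda_{\mathrm{K3}}$ with $\alpha \in \beta^\perp \cap \rho^\perp$ and $\langle \alpha, \alpha \rangle > 0$. Since $\mathbb{Z}\beta \oplus \mathbb{Z}\rho$ is a rank $2$ sublattice on which the form is degenerate with exactly one positive direction, its orthogonal complement in the signature $(3,19)$ lattice $\Lambda_{\mathrm{K3}}$ contains primitive classes of arbitrarily large positive square, so such an $\alpha$ exists. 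By construction $N_{\beta,\rho}(\alpha) = \langle \alpha, \beta \rangle \rho - \langle \alpha, \rho \rangle \beta = 0$, so $\alpha$ is $T_{\beta,\rho}$-invariant, which is the compatibility needed for the eventual family to carry an $\alpha$-polarization.

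Since $\beta$ is primitive isotropic in $\alpha^\perp$, it determines a cusp $c_\beta$ of $\mathscr{M}_\alpha^\mathrm{BB}$. The second step is to choose a holomorphic disc $\varphi: \Delta \to \mathscr{M}_\alpha^\mathrm{BB}$ with $\varphi(0) = c_\beta$ and $\varphi(\Delta^*) \subset \mathscr{M}_\alpha$, and pull back a universal family of $\alpha$-polarized K3 surfaces (available after passing to a sufficiently fine level cover of $\mathscr{M}_\alpha$, which is quasiprojective) to obtain the desired family $\pi: \mathscr{S}^* \to \Delta^*$. The local structure theory of type III cusps together with Landman's theorem (the local monodromy is unipotent of index at most three) forces the monodromy logarithm on $H^2(S_1; \mathbb{Q})$ to take the form $x \mapsto \langle x, \beta\rangle v - \langle x, v\rangle \beta$ for some $v \in \alpha^\perp$, where $v$ is determined by the tangent direction of $\varphi$ at $0$ in the tube domain realization of a neighbourhood of $c_\beta$.

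The main obstacle is then to arrange that $v$ is a positive rational multiple of $\rho$. This is achieved by choosing $\varphi$ whose tangent direction at the cusp matches the class of $\rho$ in the tube domain coordinates; this is precisely the computation carried out in \cite[Section 4]{sold}, which identifies the imaginary directions of the tube domain near $c_\beta$ with classes in $(\beta^\perp/\langle\beta\rangle) \otimes \mathbb{R}$ and shows that any such direction is realized by an algebraic one-parameter family via the surjectivity of the period map. Once $v \in \mathbb{Q}_{>0}\rho$, Proposition \ref{prop:weightfilt} identifies the resulting logarithm with a rational multiple of $N_{\beta,\rho}$, completing the proof; projectivity of $\pi$ is automatic from the presence of the polarization $\alpha$.
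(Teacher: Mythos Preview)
Your proposal is essentially correct and follows the same strategy as the paper: choose a polarization $\alpha$ orthogonal to $\beta$, approach the type III cusp $c_\beta$ of $\mathscr{M}_\alpha^{\mathrm{BB}}$ along a suitable holomorphic disc, and invoke \cite[Section 4]{sold} to identify the monodromy logarithm. Both arguments ultimately defer the key technical step to Soldatenkov.

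The main difference is in how the disc and its monodromy are described. The paper works directly with the nilpotent orbit $\{\exp(2\pi\mathtt{i}\, t\, N_{\beta,\rho})\cdot x : t\in\mathbb{C}\}$; this is by construction a disc in $\mathscr{P}_\alpha$ whose monodromy under $t\mapsto t+1$ is exactly $\exp(N_{\beta,\rho})$, and one then uses a projective family with dominant period map (global Torelli, \cite[Lemma 4.5]{sold}) to realize this disc algebraically. Your phrasing that ``$v$ is determined by the tangent direction of $\varphi$ at $0$'' is imprecise: monodromy is a discrete invariant, not a function of an infinitesimal tangent. What is true in the tube-domain picture is that a lift $\mathbb{H}\to V+iC$ of the form $z\mapsto z\cdot v + x_0$ has monodromy equal to translation by $v$, so the relevant datum is the asymptotic direction of the lift (equivalently, the chosen nilpotent $N$), not a tangent at the cusp. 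This is exactly the nilpotent-orbit construction, and once you say it that way your argument and the paper's coincide. Your route to projectivity via a universal family over a level cover is a legitimate alternative to the paper's dominant-period-map argument.
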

\begin{proof}
Associated to the operator $N_{\beta,\rho}$, one constructs a family of nilpotent orbits in the compact dual $\mathscr{P}_\alpha^\vee$ of $\mathscr{P}_\alpha$. Recall that $\mathscr{P}_\alpha^\vee$ is the subset of the quadric $\tau \in \mathbb{P}(\alpha^\perp \otimes \mathbb{C}), \langle \tau , \tau \rangle  = 0$ which can be obtained as $\exp(2\mathtt{i}\pi t N_{\beta,\rho})x$ for some $t \in \mathbb{C}$ and $x \in \mathscr{P}_\alpha$. In \cite[Lemma 4.4]{sold}. For some fixed $x\in \mathscr{P}^\vee_\alpha$, the subset given by 
\[
\{ \exp(2\mathtt{i}\pi t N_{\beta,\rho})x : t \in \mathbb{C} \}
\]
is called a {\em nilpotent orbit} of $N_{\beta,\rho}$. Soldatenkov shows that the collection of all nilpotent orbits associated to $N_{\beta,\rho}$ is open and nonempty, hence their intersection with $\mathscr{P}_\alpha$ is open and nonempty as well. 

In \cite[Lemma 4.5]{sold}, using the Torelli theorem for K3 surfaces, Soldatenkov shows that there is some projective family $\mathscr{Y} \rightarrow \mathscr{B}$ so that the period map $\mathrm{per} : \mathscr{B} \rightarrow \mathscr{M}_\alpha$ is dominant. Therefore, there is some nilpotent orbit $\mathscr{N}$ whose intersection with a neighbourhood of the cusp $c_\beta$ is a disc $\Delta^*$. By work of Borel, this map extends to a map from $\Delta$ to $\mathscr{M}_\alpha^\mathrm{BB}$. Thus we can construct a family of K3 surfaces $\mathscr{S}^* \rightarrow \Delta^*$ whose period map is a neighbourhood of the origin in a nilpotent orbit associated to $N_{\beta,\rho}$. Hence the monodromy of $\mathscr{S}$ is, after finite order base change, a power of $\exp(N_{\beta,\rho})$ \cite[Theorem 4.6]{sold}. 
\end{proof}
\begin{theorem}\label{thm:pwk3}
Let $g: S \rightarrow \mathbb{P}^1$ be an elliptic fibration with section on a K3 surface. Then there is a semistable degeneration of K3 surfaces $(\mathscr{S},\pi)$ and a diffeomorphism between $S$ and $S_1$ so that $P_jH^i(S;\mathbb{Q}) = W_{2j}H^i(S_1;\mathbb{Q})= W_{2j+1}H^i(S_1;\mathbb{Q})$ for all $i$ and $j$.
\end{theorem}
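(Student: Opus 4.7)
The plan is to reduce the statement to its nontrivial content in $H^2$ and then combine Proposition \ref{prop:pervfilt}, Proposition \ref{prop:weightfilt}, and the construction of Theorem \ref{thm:construction}, using Kulikov--Persson--Pinkham semistable reduction to produce the required degeneration. Since any K3 surface satisfies $H^1 = H^3 = 0$ and has $H^0$ and $H^4$ one-dimensional of pure weights $0$ and $4$ respectively, the identity $P_jH^i = W_{2j}H^i = W_{2j+1}H^i$ is trivial when $i \neq 2$, so I will focus on $H^2$.

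Let $\beta \in H^2(S;\mathbb{Z})$ be the fiber class of $g$ and pick any $\rho \in \beta^\perp$ with $\langle \rho, \rho \rangle > 0$. Theorem \ref{thm:construction} produces a projective family $\mathscr{S}^* \to \Delta^*$ of K3 surfaces whose logarithmic monodromy on $H^2$ is a positive integer multiple of $N_{\beta,\rho}$; after a finite base change of the disc the monodromy becomes exactly $\exp(N_{\beta,\rho})$, and this operation preserves the rational monodromy weight filtration. Applying the Kulikov--Persson--Pinkham theorem, this family extends (after further finite base change and birational modification if needed) to a K\"ahler semistable degeneration $(\mathscr{S}, \pi)$ with smooth fiber a K3 surface $S_1$.

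The monodromy weight filtration of Definition \ref{defn:mwf} is uniquely determined by the monodromy operator, so the weight filtration of the limit mixed Hodge structure on $H^2(S_1;\mathbb{Q})$ coincides with the filtration $M'_\bullet$ from Proposition \ref{prop:weightfilt}; since $M'_\bullet$ jumps only at even indices, the equality $W_{2j+1} = W_{2j}$ follows immediately. Next, all K3 surfaces are diffeomorphic, and the mapping class group of a K3 acts transitively (up to orientation) on primitive isotropic classes in $H^2$, so we can choose a diffeomorphism $\phi : S_1 \to S$ such that $\phi^*$ sends the fiber class of $g$ to the Soldatenkov class $\beta$. Under $\phi^*$, the filtration $M'_\bullet H^2(S_1;\mathbb{Q})$ is identified with $M'_\bullet H^2(S;\mathbb{Q})$, which depends only on the pair $(H^2(S;\mathbb{Z}), \beta)$ by Proposition \ref{prop:weightfilt}(1); Proposition \ref{prop:pervfilt} then identifies this filtration with the perverse Leray filtration $P_\bullet H^2(S;\mathbb{Q})$ for $g$, completing the comparison.

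The principal technical obstacle is the semistable reduction step: one must promote the analytically constructed Soldatenkov family to a K\"ahler semistable degeneration. For K3 surfaces this is ensured by the Kulikov--Persson--Pinkham theorem, which produces a K\"ahler semistable model for any unipotent degeneration of K3 surfaces. A secondary subtlety is the choice of $\phi$, which must realize a prescribed $H^2$-lattice isometry sending one isotropic class to another; this is standard for K3 surfaces, using that any orientation-preserving isometry of $\Lambda_{\mathrm{K3}}$ is induced by a self-diffeomorphism.
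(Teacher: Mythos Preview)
Your proof is correct and follows essentially the same route as the paper: reduce to $i=2$, invoke Theorem~\ref{thm:construction} to get a family with logarithmic monodromy a multiple of $N_{\beta,\rho}$, then combine Propositions~\ref{prop:weightfilt} and~\ref{prop:pervfilt} to match $M'_\bullet$ with both the limit weight filtration and the perverse Leray filtration. You are in fact more careful than the paper on two points it leaves implicit: you explicitly invoke Kulikov--Persson--Pinkham to pass from the punctured-disc family to a genuine semistable degeneration $(\mathscr{S},\pi)$, and you spell out the diffeomorphism $\phi:S_1\to S$ matching the isotropic classes so that the lattice-theoretic filtration $M'_\bullet$ transports correctly. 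One small caution: the statement that every orientation-preserving isometry of $\Lambda_{\mathrm{K3}}$ is realized by a diffeomorphism requires the isometry to lie in the index-two subgroup preserving the orientation of positive $3$-planes (Borcea--Donaldson--Matumoto), but since that subgroup still acts transitively on primitive isotropic vectors this does not affect your argument.
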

\begin{proof}
In the cases where $i = 0,1,3,4$, there is nothing to prove. In the case where $i = 2$, Theorem \ref{thm:construction} shows that there is some projective family $\mathscr{S}^* \rightarrow \Delta^*$ whose log of monodromy is a multiple of $N_{\beta,\rho}$. Using Proposition \ref{prop:weightfilt}, it follows that the monodromy weight filtration associated to this family is $M_\bullet$. Then Proposition \ref{prop:pervfilt} identifies the monodromy weight filtration of $(\mathscr{S},\pi)$ with the perverse Leray filtration of $g$.
\end{proof}
\begin{remark}
In the case where $g$ has only Kodaira type $I_1$ fibers, this theorem was essentially proved by Gross \cite[Section 7]{gross}. Gross showed that, given Lagrangian torus fibration with section on a K3 surface, whose fibers are at worst nodal 2-tori, one can construct an operator that looks like the monodromy operator of a degeneration, and the {\em Leray filtration} of this Lagrangian torus fibration agrees with the monodromy weight filtration induced by his monodromy operator. In the case where all fibers are of type $I_1$, the Leray and perverse Leray filtrations coincide \cite[\S 1.4.3]{s-y}, so our result coincides with that of Gross. 
\end{remark}

We will now briefly discuss how this should work in the case where $M$ is an arbitrary hyperk\"ahler manifold. We let $\ell : M \rightarrow B$ be a holomorphic Lagrangian torus fibration on $M$. Then the intersection pairing $\langle \bullet, \bullet \rangle$ should be replaced with the Beauville--Bogomolov--Fujiki form, $q(\bullet,\bullet)$. Under this identification, the class $\beta$ in $H^2(M;\mathbb{Q})$ given by the pullback of an ample divisor on $B$ has square 0. Choosing an element $\rho \in H^2(M;\mathbb{Q})$ which is orthogonal to $\beta$ with respect to $q$, we obtain a pair of operators on $H^*(M;\mathbb{Q})$ given by cup product with $\beta$ and $\rho$ respectively, and denoted $L_\beta, L_\rho$. The operator $L_\beta$ admits and adjoint $L_\beta^\dagger$, so we may define $N_{\beta,\rho} = [L_\rho, L_\beta^\dagger]$. Then all of the results above should be true in this more general context; for instance, Theorem \ref{thm:construction} can be applied verbatim to arbitrary hyperk\"ahler manifolds. 
\begin{remark}
A consequence of the P=W conjecture for compact hyperk\"ahler manifolds is that if $(M,\pi)$ is a type III degeneration of compact hyperk\"ahler manifold, the limit mixed Hodge structure should have the so-called ``curious hard Lefschetz property'' (see e.g. \cite[(1.1.2)]{dchm}). A proof of this fact will be given in \cite{ha1}.
\end{remark}

\bibliographystyle{plain}
\bibliography{wflcyref}

\end{document}